\newcommand{\TheTitle}{On Best-Response Dynamics in Potential Games}
\newcommand{\TheAuthors}{B. Swenson, R. Murray, and S. Kar}
\headers{\TheTitle}{\TheAuthors}
\title{{\TheTitle}}
\newcommand{\Gr}{\mbox{\textup{Graph}}}
\newcommand{\mbb}{\mathbb}
\newcommand{\R}{\mathbb{R}}
\newcommand{\N}{\mathbb{N}}
\newcommand{\dx}{\,dx}
\newcommand{\ds}{\,ds}
\newcommand{\spt}{\mbox{\textup{spt}}}
\newcommand{\carr}{\mbox{\textup{carr}}}
\newcommand{\diam}{\mbox{\textup{diam\,}}}
\newcommand{\BRtilde}{\stackrel{\sim}{\smash{\mbox{BR}}\rule{0pt}{1.2ex}}}
\newcommand{\actionik}{y_{i}^{k}}
\newcommand{\actionikone}{y_{i}^{k+1}}
\newcommand{\actionione}{y_{i}^{1}}
\newcommand{\BR}{\mbox{\textup{BR}}}
\newcommand{\FP}{\textup{BRD}}
\newcommand{\cl}{\mbox{\textup{cl\,}}}
\newcommand{\ri}{\mbox{\textup{ri\,}}}
\newcommand{\ddt}{\frac{d}{dt}}
\DeclareRobustCommand{\rchi}{{\mathpalette\irchi\relax}}
\newcommand{\irchi}[2]{\raisebox{\depth}{$#1\chi$}} 
   \def\vx{{\bf x}}
 \def\vz{{\bf z}}
   \def\vH{{\bf H}}
\def\vI{{\bf I}} \def\vJ{{\bf J}}  
   \def\vP{{\bf P}}
\def\calD{\mathcal{D}}  
 \def\calH{\mathcal{H}} \def\calI{\mathcal{I}}
  \def\calL{\mathcal{L}}
 \def\calN{\mathcal{N}} 
\def\calP{\mathcal{P}}
 \def\calZ{\mathcal{Z}}
\theoremstyle{remark}
\newtheorem{remark}[theorem]{Remark}
\newtheorem{example}[theorem]{Example}
\newtheorem{mydef}[theorem]{Definition}
\newtheorem{property}{Property}
\author{Brian Swenson\footnotemark[2]\,, \hspace{2pt}Ryan Murray\footnotemark[3]\,,  and  Soummya Kar\footnotemark[2]}
\begin{document}
\maketitle

\renewcommand{\thefootnote}{\fnsymbol{footnote}}

\footnotetext[2]{Department of Electrical and Computer Engineering,
Carnegie Mellon University, Pittsburgh, PA, USA (brianswe@ece.cmu.edu, soummyak@ece.cmu.edu).}
\footnotetext[3]{Department of Mathematics, Pennsylvania State University, State College, PA, USA (rwm22@psu.edu).\\ We note that a preliminary version of the work on the rate of convergence of BR dynamics in Section \ref{sec_conv_rate} was presented at the Allerton conference on communication, control, and computing \cite{swenson2017convRate}.}

\renewcommand{\thefootnote}{\arabic{footnote}}

\begin{abstract}
The paper studies the convergence properties of (continuous) best-response dynamics from game theory. Despite their fundamental role in game theory, best-response dynamics are poorly understood in many games of interest due to the discontinuous, set-valued nature of the best-response map. The paper focuses on elucidating several important properties of best-response dynamics in the class of multi-agent games known as potential games---a class of games with fundamental importance in multi-agent systems and distributed control.
It is shown that in almost every potential game and for almost every initial condition, the best-response dynamics (i) have a unique solution, (ii) converge to pure-strategy Nash equilibria, and (iii) converge at an exponential rate.
\end{abstract}

\begin{keywords}
  Game theory, Learning, Best-response dynamics, Fictitious play, Potential games, Convergence rate
\end{keywords}

\begin{AMS}
93A14, 93A15, 91A06, 91A26, 37B25
\end{AMS}

\section{Introduction} \label{sec_intro}
A Nash equilibrium (NE) is a solution concept for multi-player games in which no player can unilaterally improve their personal utility.
Formally, a Nash equilibrium is defined as a fixed point of the best-response mapping---that is, a strategy $x^*$ is said to be a NE if
$$
x^* \in \BR(x^*),
$$
where BR denotes the (set-valued) best-response mapping (see Section \ref{sec_prelims} for a formal definition).

A question of fundamental interest is, given the opportunity to interact, how might a group of players adaptively learn to play a NE strategy over time?
In response, it is natural to consider the dynamical system induced by the best response mapping itself:\footnote{Since the map $\BR$ is set-valued (see Section \ref{sec_notation}), the dynamical system \eqref{def_FP_autonomous} is a differential \emph{inclusion} rather than a differential equation.
However, as we will see later in the paper, in potential games the $\BR$ map can generally be shown to be almost-everywhere single-valued along solution curves of \eqref{def_FP_autonomous} (see Remark \ref{remark_BR_diff_equation}). Thus, for the intents and purposes of this paper, it is relatively safe to think of \eqref{def_FP_autonomous} as a differential equation $\dot{\vx} =\BR(\vx) - \vx$ with discontinuous right-hand side.}
\begin{equation} \label{def_FP_autonomous}
\dot{\vx} \in \BR(\vx) - \vx.
\end{equation}
By definition, the set of NE coincide with the equilibrium points of these dynamics.
In the literature, the learning procedure \eqref{def_FP_autonomous} is generally referred to as \emph{best-response dynamics} (BR dynamics) \cite{gilboa1991social,matsui1992best,hofbauer1995stability,hopkins1999note,
hofbauer2003evolutionary,cortes2015self}.\footnote{For reasons soon to become clear, the system \eqref{def_FP_autonomous} is sometimes also referred to as (continuous-time) \emph{fictitious play} \cite{harris1998rate,shamma2004unified,krishna1998convergence}.
We will favor the term ``BR dynamics''.}

BR dynamics are fundamental to game theory and have been studied in numerous works including \cite{gilboa1991social,matsui1992best,hofbauer1998evolutionary,hofbauer1995stability,
hopkins1999note,cortes2015self,harris1998rate,benaim2005stochastic,leslie2006generalised,krishna1998convergence}.
These dynamics model various forms of learning in games. From the perspective of evolutionary learning, \eqref{def_FP_autonomous} can be seen as modeling adaptation in a large population when some small fraction of the population revises their strategy as a best response to the current population strategy \cite{hofbauer2003evolutionary}. Another interpretation is to consider games with a finite number of players and suppose that each player continuously adapts their strategy according to the dynamics (1).
From this perspective, the dynamics \eqref{def_FP_autonomous} are closely related to a number of discrete-time learning processes. For example, the popular fictitious play (FP) algorithm---a canonical algorithm that serves as a prototype for many others---is merely an Euler discretization of \eqref{def_FP_autonomous}.\footnote{Hence the name ``continuous-time fictitious play'' for \eqref{def_FP_autonomous}.} Accordingly, many asymptotic properties of FP can be determined by studying the asymptotic properties of the system \eqref{def_FP_autonomous} (this approach is formalized in \cite{benaim2005stochastic}).
As another example, \cite{leslie2006generalised} studies a payoff-based algorithm in which players estimate expected payoffs and tend, with high probability, to pick best response actions; the underlying dynamics can again be shown to be governed by the differential inclusion \eqref{def_FP_autonomous}.
There are numerous other learning algorithms \cite{long2007non,mohsenian2010autonomous,shamma2005dynamic,coucheney2014penalty,swenson2017SSFP,swenson2012ECFP} (to cite a few recent examples) that rely on best-response adaptation whose underlying dynamics are heavily influenced by \eqref{def_FP_autonomous}.
Using the framework of stochastic approximation \cite{benaim2005stochastic,kushner2003stochastic,borkar2008stochastic,
swenson2017robustness}, the relationship between such discrete and continuous dynamics can be made rigorous.

The BR dynamics are also closely related to other popular learning dynamics including the replicator dynamics \cite{hofbauer2009time} and positive definite adaptive dynamics \cite{hopkins1999note}. While these dynamics do not rely explicitly on best-response adaptation, their asymptotic behavior is often similar to that of BR dynamics \cite{hofbauer2009time,hopkins1999note}.

Despite the general importance of BR dynamics in game theory, they are not well understood in many games of interest. In large part, this is due to the fact that, since \eqref{def_FP_autonomous} is a differential \emph{inclusion} rather than a classical differential equation, it is often difficult to analyze using classical techniques. In this work our objective will be to elucidate several fundamental properties of \eqref{def_FP_autonomous} within the important class of multi-agent games known as potential games \cite{Mond96}.\footnote{We will focus here on potential games with a finite number of actions, saving continuous potential games for a future work.}

In a potential game, there exists an underlying potential function which all players implicitly seek to maximize. Such games are fundamentally cooperative in nature (all players benefit by maximizing the potential) and have a tremendous number of applications in both economics \cite{Mond96,sandholm2001potential} and engineering \cite{marden-connections,li2013designing,cheng2014finite,scutari2006potential,
buzzi2012potential,saad2012game,soto2009distributed}, where
game-theoretic learning dynamics such as \eqref{def_FP_autonomous} are commonly used as mechanisms for distributed control of multi-agent systems.
Along with so-called \emph{harmonic games} (which are fundamentally adversarial in nature), potential games may be seen as one of the basic building blocks of general $N$-player games \cite{candogan-potential-decomposition}.

In finite potential games there are several important properties of \eqref{def_FP_autonomous} that are not well understood. For example
\begin{itemize}
\item It is not known if solutions of \eqref{def_FP_autonomous} are generically well posed. More precisely, being a differential inclusion, it is known that solutions of \eqref{def_FP_autonomous} may lack uniqueness for some initial conditions. But it is not understood if non-uniqueness of solutions is typical, or if it is somehow exceptional. E.g., it may be the case that solutions are unique from almost every initial condition, and hence \eqref{def_FP_autonomous} is, in fact, generically well posed in potential games. This has been speculated \cite{hofbauer1995stability}, but has never been shown.
\item There are no convergence rate estimates for BR dynamics in potential games. 
    It has been conjectured that the rate of convergence of \eqref{def_FP_autonomous} is exponential in potential games (\hspace{-.01em}\cite{harris1998rate}, Conjecture 25). However, this has never been shown.
\end{itemize}
Regarding the second point, we remark that, even outside the class of BR-based learning dynamics, convergence rate estimates for game-theoretic learning dynamics are relatively scarce.
Thus, given the practical importance of convergence rates in applications, there is strong motivation for establishing rigorous convergence rate estimates for learning dynamics in general. This is particularly relevant in engineering applications with large numbers of players.

Underlying both of the above issues is a single common problem. The set of NE may be subdivided into pure-strategy (deterministic) NE and mixed-strategy (probabilistic) NE. Mixed-strategy NE tend to be highly problematic for a number of reasons \cite{jordan1993}.
Being a differential inclusion, trajectories of \eqref{def_FP_autonomous} can reach a mixed equilibrium in finite time. In a potential game, once a mixed equilibrium has been reached, a trajectory can rest there for an arbitrary length of time before moving elsewhere. This is both the cause of non-uniqueness of solutions and the principal reason why it is impossible to establish general convergence rate estimates which hold at all points.

In addition to hindering our theoretical understanding of BR dynamics, mixed equilibria are also highly problematic from a more application-oriented perspective. Mixed equilibria are nondeterministic, have suboptimal expected utility, and do not always have clear physical meaning \cite{arslan2007autonomous}; consequently, in engineering applications, practitioners often prefer algorithms that are guaranteed to converge to a pure-strategy NE \cite{marden-payoff,marden2009overcoming,arslan2007autonomous,li2013designing}.

It has been speculated that, in potential games (or at least, outside of zero-sum games), BR dynamics should rarely converge to mixed equilibria \cite{krishna1998convergence,hofbauer1995stability,arslan2007autonomous}, and thus the aforementioned issues should rarely arise in practice. However, rigorous proof of such a result has not been available.

In this paper we attempt to shed some light on these issues. Informally speaking, we will show that in potential games (i) BR dynamics almost never converge to mixed equilibria, (ii) solutions of \eqref{def_FP_autonomous} are almost always unique, and (iii) the rate of convergence of \eqref{def_FP_autonomous} is almost always exponential.

The study of these issues is complicated by the fact that one can easily construct counterexamples of potential games where BR dynamics behave poorly. In order to eliminate such cases, we will focus on games that are ``regular'' as introduced by Harsanyi \cite{harsanyi1973oddness}. Regular games (see Section \ref{sec_reg_games}) have been studied extensively in the literature \cite{van1991stability}; they are highly robust and simple to work with, and, as we will see below, are ideally suited to studying BR dynamics.

The linchpin in addressing all of the issues noted above is gaining a rigorous understanding of the question of convergence to mixed equilibria. The main result of the paper is the following theorem, which shows that convergence to such equilibria is in fact exceptional.
Before stating the theorem we note that, unless explicitly specified otherwise, throughout the paper we use the term ``potential game'' broadly to mean a weighted potential game \cite{Mond01} (which includes exact potential games and games with identical payoffs as special cases).
\begin{theorem} \label{thrm_main_result}
Suppose $\Gamma$ is a regular potential game. Then from almost every initial condition, solutions of \eqref{def_FP_autonomous} converge to a pure-strategy Nash equilibrium.
\end{theorem}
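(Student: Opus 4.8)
The plan is to use the potential function $\Phi$ as a Lyapunov function to reduce the claim to a statement about the local behavior of \eqref{def_FP_autonomous} near the finitely many \emph{mixed}-strategy equilibria, and then to show that each such equilibrium attracts only a Lebesgue-null set of initial conditions. First I would verify that $\Phi$ is strictly increasing along non-equilibrium solutions: in a weighted potential game each player's payoff gradient is aligned, up to the positive weight $w_i$, with the partial gradient $\nabla_{x_i}\Phi$, so for any measurable selection $\vb(t)\in\BR(\vx(t))$ one has $\langle \nabla_{x_i}\Phi(\vx), b_i - x_i\rangle = w_i^{-1}\big(\max_{a}u_i(a,x_{-i}) - u_i(\vx)\big)\ge 0$, with simultaneous equality over all $i$ precisely at Nash equilibria. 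Hence $\ddt\Phi(\vx(t))\ge 0$ and vanishes only on the equilibrium set, and a LaSalle invariance principle for differential inclusions together with the finiteness and isolation of equilibria in a regular game forces every solution to converge to a single Nash equilibrium. It therefore suffices to show that the set $W^s(x^*):=\{\vx_0 : \vx(t)\to x^*\}$ is Lebesgue-null for each of the finitely many mixed equilibria $x^*$.

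Second, I would exploit the structure of $\Phi$ at a mixed equilibrium to show that $x^*$ is linearly unstable. Since the payoffs are multilinear, $\Phi$ is affine in each $x_i$ separately, so the diagonal blocks $\partial^2\Phi/\partial x_i^2$ of its Hessian vanish identically; regularity guarantees that the restriction of this Hessian to the tangent space of the product of support simplices is non-degenerate. A symmetric matrix whose diagonal blocks vanish has zero trace, so if it is nonzero it must have both a positive and a negative eigenvalue. Thus, restricted to the support faces, $x^*$ is a \emph{saddle} of $\Phi$ and admits a tangent direction of strict second-order increase. Moreover the transverse directions are harmless: for $a\notin\supp(x_i^*)$ the action $a$ is strictly suboptimal throughout a neighborhood of $x^*$, so it never enters the best response and $\dot{x}_{i,a}=-x_{i,a}$ contracts toward the support face. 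Hence the only potentially unstable directions lie inside the support faces, where the saddle geometry supplies an expanding direction.

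Third — and this is the main obstacle — I must convert linear instability into the measure-zero conclusion in the presence of the discontinuity of $\BR$, which prevents a direct appeal to the classical stable manifold theorem (indeed $\BR$ is set-valued exactly at $x^*$). The key is that near $x^*$ the dynamics are \emph{conewise affine}: the support face decomposes into finitely many polyhedral cones, on each of which $\BR$ equals a fixed vertex $\vb$, so that $\dot{\vx}=\vb-\vx$ and the local flow is an explicit damped affine motion toward $\vb$. I would analyze the induced flow on the projectivized (angular) variable and show that the expanding direction furnished by the saddle structure of $\Phi$ persists, so that the local stable set $W^s_{\mathrm{loc}}(x^*)$ is trapped inside a Lipschitz set of codimension at least one. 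The delicate point is that monotonicity of $\Phi$ alone only confines $W^s_{\mathrm{loc}}(x^*)$ to the sublevel region $\{\Phi\le\Phi(x^*)\}$, which has positive measure near a saddle; upgrading this to codimension one is precisely the stable-manifold content that must be established directly for the conewise-affine inclusion.

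Finally I would globalize: every convergent trajectory eventually enters and remains in $W^s_{\mathrm{loc}}(x^*)$, so $W^s(x^*)\subseteq\bigcup_{n\in\N}\phi_{-n}\big(W^s_{\mathrm{loc}}(x^*)\big)$, where $\phi_t$ denotes the forward flow of \eqref{def_FP_autonomous}. Because the forward flow maps are Lipschitz, the flow-out of the codimension-one local stable set remains Lebesgue-null, and taking the finite union over all mixed equilibria shows that the complement of $\bigcup_{x^*}W^s(x^*)$ has full measure. From every initial condition in this full-measure set the solution converges to an equilibrium that is not mixed, i.e., to a pure-strategy Nash equilibrium, which is the assertion of the theorem.
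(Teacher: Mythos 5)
Your Lyapunov/LaSalle reduction and the saddle-point observation about the restricted Hessian are sound, and they parallel the paper's own setup (finiteness of equilibria in regular games, convergence to the NE set via \cite{benaim2005stochastic}). But the heart of your argument---that the local stable set of the conewise-affine inclusion is a Lipschitz set of codimension at least one---is precisely the step you flag as ``must be established directly,'' and you never establish it. This is not a routine verification: the BR field is not the gradient of the potential, so an expanding eigendirection of the restricted Hessian need not be an expanding direction of the piecewise flow $\dot{\vx}=\vb-\vx$. Indeed each affine piece is a \emph{uniform contraction} toward its vertex $\vb$, so no single piece has an unstable eigenvalue for your projectivized analysis to track; all instability lives in the switching between cones. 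Moreover, trajectories can reach the equilibrium in \emph{finite} time, so for such solutions there is no asymptotic regime to linearize, and at incompletely mixed equilibria the relevant invariant object is not the point $x^*$ but the set $\Lambda(x^*)=\Gr(g)$ of Hausdorff dimension at most $\kappa-2$, along which equilibria of the reduced games $\Gamma_{x_p}$ move (Remark \ref{remark_g}); a pointwise stable-set analysis at $x^*$ does not account for trajectories absorbed into $\Gr(g)$ away from $x^*$. The paper says exactly this in Section \ref{sec_comparison}: linearization cannot be directly applied, and it deliberately replaces the stable-manifold argument with the potential-production inequalities \eqref{equation_pot_inequality1_1}--\eqref{equation_pot_inequality2_1}, which via Markov's inequality force any trajectory converging to a mixed equilibrium to reach $\Lambda(x^*)$ in finite time (Proposition \ref{prop_infinite_time_convergence}).

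Your globalization step also runs the Lipschitz implication in the wrong direction. Writing $W^s(x^*)\subseteq\bigcup_{n}\phi_{-n}\bigl(W^s_{\mathrm{loc}}(x^*)\bigr)$ and concluding nullity requires that \emph{preimages} under the forward flow of a null set be null; Lipschitz continuity of the forward maps only gives that forward \emph{images} of null sets are null. Since each affine piece contracts volume, the time-$t$ map could a priori crush sets of positive measure onto lower-dimensional sets, and ruling this out is a theorem, not a formality: it is the content of the paper's BV analysis, which shows every selection of $\FP$ has distributional divergence $-1$ with jumps tangential to indifference surfaces (Lemma \ref{lemma_FP_divergence}), and then runs a Gauss--Green/Gronwall estimate (Lemma \ref{eq_mass_concentration}, Proposition \ref{prop_finite_time_convergence}), with separate care for the exceptional sets $Q$ and $\calZ$ where solutions are non-unique---non-uniqueness that also makes your $\phi_{-n}$ ill-defined as a map. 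In short, your overall architecture (reduce to finitely many mixed equilibria, show each basin is null) matches the paper's, but both load-bearing pillars of your route---the codimension-one stable set for the discontinuous inclusion and the null-preimage property of the flow---are asserted rather than proven, and the second is exactly where the paper's divergence machinery does the real work.
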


In a companion paper \cite{swenson2017regular} we show that ``most'' potential games are regular. In particular, Theorem 1 of \cite{swenson2017regular} shows that (i) almost every weighted potential game is regular, (ii) almost every exact potential game is regular, and (iii) almost every game with identical payoffs is regular.\footnote{When we say that a property holds for almost every game of a certain class, we mean that the subset of games in that class where the property fails to hold has (appropriately dimensioned) Lebesgue measure zero. See \cite{swenson2017regular} for more details. Note that the class of games with identical payoffs is a measure-zero subset within the class of exact potential games which, in turn, is a measure-zero subset within the class of weighted potential games. Thus, generic regularity in a superclass does not imply generic regularity in a subclass.}
Thus, as an immediate consequence of Theorem \ref{thrm_main_result} above and Theorem 1 of \cite{swenson2017regular} we get the following result:
\begin{theorem}
In almost every weighted potential game, almost every exact potential game, and almost every game with identical payoffs, solutions of \eqref{def_FP_autonomous} converge to a pure-strategy Nash equilibrium from almost every initial condition.
\end{theorem}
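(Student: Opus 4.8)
The plan is to deduce the statement directly from Theorem \ref{thrm_main_result} together with Theorem 1 of \cite{swenson2017regular}, handling each of the three classes of games separately. Fix one such class $\mathcal{C}$---say the weighted potential games---and regard it as parameterized by a point in a Euclidean space of the appropriate dimension, so that ``almost every game in $\mathcal{C}$'' refers to Lebesgue measure on this parameter space. Let $R \subseteq \mathcal{C}$ denote the subset of regular games in $\mathcal{C}$.

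First I would invoke Theorem 1 of \cite{swenson2017regular}, which asserts that $\mathcal{C} \setminus R$ is Lebesgue-null; that is, almost every game in $\mathcal{C}$ is regular. Next, for each fixed regular game $\Gamma \in R$, Theorem \ref{thrm_main_result} applies and tells us that the set of initial conditions in the strategy space (a product of simplices) from which solutions of \eqref{def_FP_autonomous} fail to converge to a pure-strategy Nash equilibrium has measure zero with respect to Lebesgue measure on that space. Combining these two facts yields precisely the desired conclusion for $\mathcal{C}$: for all $\Gamma$ outside a null set---namely all $\Gamma \in R$---the corresponding set of ``bad'' initial conditions is itself null. I would then repeat the identical argument verbatim with $\mathcal{C}$ taken to be the class of exact potential games and, finally, the class of games with identical payoffs.

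The logical structure here is a nesting of two ``almost every'' quantifiers rather than a joint statement over the product of game-space and strategy-space, so no Fubini-type interchange is required: one simply asserts, for each individual regular game, an almost-sure statement about its initial conditions. The one point demanding care---and the reason the argument cannot be compressed into a single application---is the subtlety flagged in the footnote above: the games with identical payoffs form a measure-zero subset of the exact potential games, which in turn form a measure-zero subset of the weighted potential games, so generic regularity in a larger class conveys no information whatsoever about a smaller one. Consequently the separate generic-regularity guarantees for all three classes supplied by \cite{swenson2017regular} are indispensable. Given those three guarantees, the deduction is immediate, and I anticipate no substantive obstacle beyond correctly tracking which class each application refers to.
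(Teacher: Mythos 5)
Your proposal is correct and follows exactly the paper's own route: the paper derives this theorem as an immediate consequence of Theorem \ref{thrm_main_result} combined with Theorem 1 of \cite{swenson2017regular}, applying the separate generic-regularity guarantee for each of the three classes just as you describe. Your observation that the measure-zero nesting of the classes makes the three separate guarantees indispensable matches the paper's own footnote on this point.
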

Moreover, since pure equilibria are highly stable in regular potential games (they are strict \cite{van1991stability}), Theorem \ref{thrm_main_result} also implies that BR dynamics generically converge to equilibria that are stable both to perturbations in strategies and payoffs.
As a byproduct of the proof of Theorem \ref{thrm_main_result} we will get the following result regarding uniqueness of solutions of \eqref{def_FP_autonomous}:\footnote{The notion of solutions considered in the paper is defined in Section \ref{sec_BR_dynamics_def}.}
\begin{proposition} \label{prop_uniqueness}
Suppose $\Gamma$ is a potential game. Then for almost every initial condition, solutions of \eqref{def_FP_autonomous} are unique.
\end{proposition}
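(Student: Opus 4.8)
The plan is to exploit the piecewise-affine structure of \eqref{def_FP_autonomous} and then show that the initial conditions producing non-unique solutions form a Lebesgue-null set, via a backward-saturation argument. The potential structure is what makes this tractable: in a potential game the dynamics are gradient-like, so trajectories cannot exhibit the recurrent or cycling behavior seen in, e.g., zero-sum games, and the only mechanism for branching is the possibility of resting at an equilibrium. This localizes all non-uniqueness to the Nash set together with a lower-dimensional skeleton.

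First I would record the local structure. Writing the strategy space as $\Delta=\prod_i\Delta_i$, the payoff ties --- points where some player is indifferent between two actions --- cut $\Delta$ into finitely many open cells, and on the interior of each cell $\BR$ is a single vertex $\vv$, so that \eqref{def_FP_autonomous} reduces to the linear ODE $\dot\vx=\vv-\vx$ with the unique contracting solution $\vx(t)=\vv+e^{-t}(\vx(0)-\vv)$. Hence uniqueness can be lost only on the discontinuity set $H$, the union of the finitely many indifference hypersurfaces, which is closed and null.

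Next I would classify the forward-reachable points at which continuation can branch. Sorting a reached point $\vy\in H$ by the directions of the incident fields $\vv-\vy$, transversal crossings and attracting (sliding) configurations each admit a single consistent continuation and so preserve uniqueness, whereas doubly-repelling configurations, though ambiguous, are unreachable in forward time from off $H$. The only genuinely branching reachable points are therefore Nash equilibria (where $0\in\BR(\vy)-\vy$, so that ``rest'' competes with ``move'') and the codimension-$\ge 2$ skeleton $S$ of junctions lying on two or more hypersurfaces; thus the branch set satisfies $B\subseteq\mathcal{N}\cup S$, where $\mathcal{N}$ is the Nash set. Since a solution is non-unique iff its uniquely determined initial arc reaches $B$, the bad initial conditions lie in the backward saturation $\bigcup_{t\ge 0}\phi_{-t}(B)$. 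The flow is piecewise-affine, hence locally Lipschitz, so it maps null sets to null sets and raises dimension by at most one; as $S$ has codimension $\ge 2$, and pure equilibria are vertices that the contraction reaches only as $t\to\infty$, both contribute null sets.

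The main obstacle is controlling the mixed equilibria without the hyperbolicity that regularity would provide (regularity underlies Theorem~\ref{thrm_main_result} but is not assumed here). Reachability localizes the issue: if $\vx_0$ reaches a mixed equilibrium $\vp$ through a last cell with vertex $\vv$, then $\vx_0$ lies on the ray from $\vv$ through $\vp$, so the offending initial conditions form a cone of dimension $\dim(\mathcal{N})+1$, and nullity reduces to showing the mixed part of $\mathcal{N}$ has codimension $\ge 2$. I would prove this by stratifying $\mathcal{N}$ by support profile and counting indifference equations against the dimension of the corresponding product of faces; the potential-game structure, via the monotone potential, is what I would use to exclude the degenerate positive-dimensional equilibrium components that could otherwise inflate the cone to positive measure. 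A countable union over the (finitely many) cells and over the successive affine pieces of the trajectory then yields that the bad set is null.
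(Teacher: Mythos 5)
Your proposal has a fatal structural problem: you set out to prove the statement \emph{without} regularity, and that cannot be done. The paper proves this proposition under the standing assumption of Section \ref{sec_proof_main_result} that $\Gamma$ is a \emph{regular} potential game; the ``almost every potential game'' coverage comes from Theorem 1 of the companion paper \cite{swenson2017regular}, not from an argument internal to an arbitrary potential game. Your closing step---using ``the monotone potential'' to exclude degenerate, positive-dimensional equilibrium components---is hopeless: the game with identically zero payoffs is an exact potential game in which $\BR(x)$ is all of $X$ for every $x$, so solutions of \eqref{def_FP_autonomous} branch from \emph{every} initial condition, and more generally potential games can have continua of equilibria (e.g., whenever the potential is constant in one player's strategy). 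The codimension-$\geq 2$ bound on the mixed Nash set that your cone argument requires is precisely what regularity purchases in the paper: second-order non-degeneracy forces at least two players to mix at any mixed equilibrium (Lemma \ref{lemma_two_players_mixing}), which is what gives $\Gr(g)$, i.e.\ $\Lambda(x^*)$, Hausdorff dimension at most $\kappa-2$ (see \eqref{eq_graph_size}), and regularity also yields finiteness of the equilibrium set. Without these inputs your stratify-and-count step has no foundation, and the unqualified statement you are trying to prove is in fact false.

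Even granting regularity, two technical steps fail. First, your classification of branch points, $B\subseteq \calN\cup S$, misses the tangency set: in a potential game the jump of $\FP$ across $\calI_{i,k,\ell}$ lives entirely in player $i$'s coordinates, which are tangent to that surface, so the \emph{normal} velocity is continuous across it---there are no attracting (sliding) or doubly-repelling configurations at all; the dangerous locus is instead $\calZ$ as defined in \eqref{def_calZ}, a codimension-\emph{one} subset of a single indifference surface where the field is tangent, at which graze-versus-slide ambiguity is not excluded by your taxonomy and which lies in neither $\calN$ nor your codimension-$\geq 2$ skeleton $S$. Second, the assertion that ``the flow is piecewise-affine, hence locally Lipschitz, so it maps null sets to null sets'' is false as stated: the vector field is discontinuous, and the hitting-time/transition maps your backward saturation needs are Lipschitz only where crossings are uniformly transversal---exactly what degenerates near $\calZ$. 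The paper replaces this step with measure-theoretic machinery: every selection $f$ of $\FP$ satisfies $D\cdot f=-1$ in the BV sense (Lemma \ref{lemma_FP_divergence}, using that jumps are tangential so the jump part of the divergence vanishes), a flux-plus-Gronwall covering argument (Lemma \ref{eq_mass_concentration} and Proposition \ref{prop_finite_time_convergence}) shows that positive volume cannot be compressed into $\cl Q\cup\partial\calZ$ in finite time, and Lemma \ref{lemma_calZ_normal} shows $\calZ$ can be entered only through its relative boundary $\partial\calZ$, which has dimension at most $\kappa-2$; uniqueness off the bad set is then Lemma \ref{lemma_unique_flow}. Your outline contains no substitute for this machinery, so the backward-saturation step does not go through.
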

Finally, as a simple application of Theorem \ref{thrm_main_result} we will prove the following result:
\begin{theorem} \label{thrm_conv_rate}
Suppose $\Gamma$ is a potential game. Then for almost every initial condition, solutions of \eqref{def_FP_autonomous} converge to the set of NE at an exponential rate.
\end{theorem}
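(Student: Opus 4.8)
The plan is to exploit the piecewise-linear structure of the dynamics \eqref{def_FP_autonomous} near a pure-strategy equilibrium. The essential observation is that whenever the best-response correspondence is constant along a solution, say $\BR(\vx) \equiv \{\vx^*\}$ with $\vx^*$ a vertex of the strategy space, the inclusion \eqref{def_FP_autonomous} collapses to the single linear ordinary differential equation $\dot{\vx} = \vx^* - \vx$, whose solution from time $t_0$ is $\vx(t) = \vx^* + (\vx(t_0) - \vx^*)e^{-(t-t_0)}$. This already converges to $\vx^*$ at the fixed exponential rate $1$; since $\vx^* \in \mathrm{NE}$, it follows immediately that $\mathrm{dist}(\vx(t),\mathrm{NE}) \le \|\vx(t_0)-\vx^*\|\,e^{-(t-t_0)}$. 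Thus the whole problem reduces to showing that a typical solution eventually reaches a region on which $\BR$ is pinned to a single pure equilibrium.

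The key step is therefore to establish local constancy of $\BR$ at a strict pure-strategy equilibrium. Recall that in a regular potential game every pure equilibrium is strict \cite{van1991stability}: for each player $i$, the equilibrium action beats every alternative by a strict margin against the equilibrium strategies of the opponents. Since each player's payoff is multilinear, hence continuous, in the opponents' mixed strategies, this strict margin persists on an open ball $U$ about $\vx^*$; consequently $\BR(\vx) = \{\vx^*\}$ for every $\vx \in U$. Taking $U$ to be a ball, the segment joining any point of $U$ to $\vx^*$ stays in $U$, so a solution that enters $U$ never leaves it and is governed by the linear equation above from that moment on.

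It remains to combine these observations with the convergence theory already in hand. By Theorem \ref{thrm_main_result}, in a regular potential game and from almost every initial condition the (by Proposition \ref{prop_uniqueness}, unique) solution converges to a pure-strategy Nash equilibrium $\vx^*$, which is strict and hence admits a neighborhood $U$ as above; the solution must enter $U$ at some finite time $t_0$, after which $\mathrm{dist}(\vx(t),\mathrm{NE})$ decays like $e^{-(t-t_0)}$. Absorbing the transient $t \le t_0$ into a constant yields a global bound $\mathrm{dist}(\vx(t),\mathrm{NE}) \le C e^{-t}$, as claimed. To pass from regular to arbitrary potential games I would invoke the companion result \cite{swenson2017regular} that almost every potential game is regular, so that the argument applies generically. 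I expect the main obstacle to lie precisely in this strictness hypothesis: the clean linear reduction breaks down at a non-strict pure equilibrium, where $\BR$ need not be locally single-valued, and at a mixed equilibrium, which a trajectory may even reach in finite time and then stall; so the exceptional, non-regular potential games genuinely require the genericity input rather than the direct mechanism above.
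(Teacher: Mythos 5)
Your proposal is correct and takes essentially the same route as the paper's proof (Proposition \ref{prop_FP_conv_rate1} via Lemma \ref{lemma_strictness}): strictness of pure equilibria in regular potential games pins $\BR(x)$ to $\{x^*\}$ on a neighborhood, so once the trajectory---unique and convergent to a pure NE from almost every initial condition by Theorem \ref{thrm_main_result} and Proposition \ref{prop_uniqueness}---enters that neighborhood, the dynamics reduce to $\dot{\vx} = x^* - \vx$ and decay exponentially, with the transient absorbed into the constant and regularity supplied generically by \cite{swenson2017regular}. Your additional invariance-of-the-ball argument is harmless but unnecessary, since convergence $\vx(t)\to x^*$ already guarantees the trajectory remains in the neighborhood for all sufficiently large $t$.
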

We remark that this resolves the Harris conjecture (\hspace{-.01em}\cite{harris1998rate}, Conjecture 25) on the rate of convergence of continuous-time fictitious play in weighted potential games.\footnote{Harris \cite{harris1998rate} showed that the rate of convergence of \eqref{def_FP_autonomous} is exponential in zero-sum games and conjectured that the rate of convergence is exponential in (weighted) potential games. However, due to the problems arising from mixed equilibria in potential games, \cite{harris1998rate} does not attempt to prove this conjecture. Our main result allows us to handle the problems arising due to mixed equilibria, and consequently allows for an easy proof of the exponential rate of convergence as conjectured in \cite{harris1998rate}.}\footnote{A preliminary version of the convergence rate estimate in Theorem \ref{thrm_conv_rate} (see also Proposition \ref{prop_FP_conv_rate1}) can be found in an earlier conference version of this work \cite{swenson2017convRate}.}

We also remark that the question of (non-) convergence of BR dynamics to mixed equilibria was previously considered\footnote{More precisely, \cite{krishna1998convergence} considers a variant of \eqref{def_FP_autonomous}, equivalent after a time change.}
in \cite{krishna1998convergence} for the case of two-player games where it was shown that BR dynamics ``almost never converge cyclically to a mixed-strategy equilibrium in which both players use more than two pure strategies.'' In contrast, in this paper we consider potential games of arbitrary size and prove (generic) non-convergence to all mixed equilibria.

The remainder of the paper is organized as follows. In Sections \ref{sec_proof_strategy}--\ref{sec_comparison} we outline our high-level strategy for proving Theorem \ref{thrm_main_result} and compare with classical techniques.  Sections \ref{sec_notation}--\ref{sec_BR_dynamics_def} set up notation. Section \ref{sec_example} gives a simple two-player example illustrating the fundamental problems arising in BR dynamics in potential games.
Section \ref{sec_reg_games} introduces regular potential games.
Section \ref{sec_inequalities} establishes the two key inequalities used to prove Theorem \ref{thrm_main_result}. Section \ref{sec_proof_main_result} gives the proof of Theorem \ref{thrm_main_result}. Section \ref{sec_uniqueness} proves uniqueness of solutions (Proposition \ref{prop_uniqueness}). Section \ref{sec_conv_rate} proves the exponential convergence rate estimate (Theorem \ref{thrm_conv_rate}).
Section \ref{sec_conclusion} concludes the paper.

\subsection{Proof Strategy} \label{sec_proof_strategy}
The basic strategy is to leverage two noteworthy properties satisfied in \emph{regular} potential games:\\
\begin{enumerate}
\item  The BR dynamics cannot
cannot concentrate volume in finite time, meaning the flow induced by \eqref{def_FP_autonomous} cannot map a set of positive (Lebesgue) measure to a set of zero measure in finite time (see Section \ref{sec_finite_time_convergence}).\\
\item  In a neighborhood of an \emph{interior} Nash equilibrium (i.e., a \emph{completely mixed equilibrium}), the magnitude of the time derivative of the potential along paths grows linearly in the distance to the Nash equilibrium, while the value of the potential varies only quadratically; that is,
\[
\frac{d}{dt} U(\vx(t)) \geq d(\vx(t),x^*) \geq \sqrt{|U(\vx(t)) - U(x^*)|},
\]
where $U$ denotes the potential function and $x^*$ is the equilibrium point.\\
\end{enumerate}

Using Markov's inequality, property 2 immediately implies that if a path converges to an interior NE then it must do so in finite time (see Section \ref{sec_infinite_time_conv}). Hence, properties 1 and 2 together imply that the set of points from which BR dynamics converge to an interior NE must have Lebesgue measure zero.

In order to handle mixed NE that are \emph{not} in the interior of the strategy space (i.e., \emph{incompletely mixed equilibria}), we consider a projection that maps incompletely mixed equilibria to the \emph{interior} of the strategy space of a lower dimensional game.
Using the techniques described above we are then able to handle completely and incompletely mixed equilibria in a unified manner.

In particular, since the number of NE strategies is finite in regular potential games, we see that the set of points from which BR dynamics converge to the set of mixed-strategy NE has Lebesgue measure zero in any such game. Since any solution of \eqref{def_FP_autonomous} must converge to a NE \cite{benaim2005stochastic}, this implies Theorem \ref{thrm_main_result}.

Properties 1 and 2 hold as long the equilibrium $x^*$ is regular. In a companion paper \cite{swenson2017regular} we show that, in almost all potential games, all equilibria are regular.


\subsection{Comparison with Classical Techniques} \label{sec_comparison}

Given a classical ODE, one can prove that an equilibrium point may only be reached from a set of measure zero by studying the linearized dynamics at the equilibrium point.
Assuming all eigenvalues associated with the linearized system are non-zero, the dimension of the stable manifold (i.e., the set of initial conditions from which the equilibrium can be reached) is equal to the dimension of the stable eigenspace of the linearized system \cite{Chicone_ODE}. Hence, to prove that an equilibrium can only be reached from a set of measure zero, it is sufficient to prove that at least one eigenvalue of the linearized system lies in the right half plane.

In BR dynamics, the vector field is discontinuous---hence, it is not possible to linearize around an equilibrium point, and such classical techniques cannot be directly applied.
However, the gradient field of the potential function is closely linked to the BR-dynamics vector field (i.e., the vector field $\BR(x) - x$; see Lemma \ref{lemma_IR1} for more details).
Unlike the BR-dynamics vector field, the gradient field of the potential function \emph{can} be linearized. In a non-degenerate game, any completely mixed-strategy NE is a non-degenerate saddle point of the potential function. Hence, at least one eigenvalue of the linearized gradient system must lie in the right-half plane. This implies that, for the gradient dynamics of the potential function, the stable manifold associated with an equilibrium point has dimension at most $\kappa-1$, where $\kappa$ is the dimension of the strategy space.

Given the close relationship between the BR-dynamics vector field and the gradient field of the potential function, intuition suggests that for BR dynamics, each mixed equilibrium should also admit a similar low-dimensional stable manifold.

While this provides an intuitive explanation for why one might expect Theorem \ref{thrm_main_result} to hold, we did not use any such linearization arguments in the proof of this result. We found that studying the rate of potential production near mixed equilibria (e.g., as discussed in the ``proof strategy'' section above) led to shorter and simpler proofs.

\section{Preliminaries} \label{sec_prelims}
\subsection{Notation} \label{sec_notation}
A game in normal form is represented by the tuple \newline $\Gamma := (N,(Y_i,u_i)_{i=1,\ldots,N})$, where $N\in\{2,3,\ldots\}$ denotes the number of players, $Y_i=\{y_i^1,\ldots,y_i^{K_i}\}$ denotes the set of pure strategies (or actions) available to player $i$, with cardinality $K_i := |Y_i|$, and $u_i:\prod_{j=1}^N Y_j \rightarrow \mbb{R}$ denotes the utility function of player $i$. Denote by $Y:= \prod_{i=1}^N Y_i$ the set of joint pure strategies, and let $K := \prod_{i=1}^N K_i$ denote the number of joint pure strategies.

For a finite set $S$, let $\triangle(S)$ denote the set of probability distributions over $S$. For $i=1,\ldots,N$, let $\Delta_{i}:=\triangle(Y_i)$ denote the set of \emph{mixed-strategies} available to player $i$. Let $\Delta := \prod_{i=1}^N \Delta_i$ denote the set of joint mixed strategies.\footnote{It is implicitly assumed that players' mixed strategies are independent; i.e., players do not coordinate.} Let $\Delta_{-i} := \prod_{j\in\{1,\ldots,N\}\backslash\{i\}} \Delta_j$. When convenient, given a mixed strategy $\sigma=(\sigma_1,\ldots,\sigma_N)\in \Delta$, we use the notation $\sigma_{-i}$ to denote the tuple $(\sigma_j)_{j\not=i}$

Given a mixed strategy $\sigma\in\Delta$, the expected utility of player $i$ is given by
$$
U_i(\sigma_1,\ldots,\sigma_N) = \sum_{y\in Y} u_i(y)\sigma_1(y_1)\cdots \sigma_N(y_N).
$$

For $\sigma_{-i} \in \Delta_{-i}$, the best response of player $i$ is given by the set-valued function $\BR_i:\Delta_{-i}\rightrightarrows\Delta_i$,
$$
\BR_i(\sigma_{-i}):= \arg\max_{\sigma_i' \in \Delta_i} U_i(\sigma_i',\sigma_{-i}),
$$
where we use the double right arrows to indicate a set-valued function.
For $\sigma\in \Delta$ the joint best response is given by the set-valued function $\BR:\Delta\rightrightarrows\Delta$
$$
\BR(\sigma) := \BR_{1}(\sigma_{-1})\times\cdots\times \BR_{N}(\sigma_{-N}).
$$

A strategy $\sigma\in \Delta$ is said to be a Nash equilibrium (NE) if $\sigma \in \BR(\sigma)$. For convenience, we sometimes refer to a Nash equilibrium simply as an equilibrium.

We say that $\Gamma$ is a potential game (or, more precisely, a \emph{finite weighted potential game}) \cite{Mond96} if there exists a function $u:Y\rightarrow \R$ and a vector of positive weights $(w_i)_{i=1}^N$, such that $u_i(y_i',y_{-i}) - u_i(y_i'',y_{-i}) = w_i\big(u(y_i',y_{-i}) - u(y_i'',y_{-i})\big)$ for all $y_{-i} \in Y_{-i}$ and $y_i',y_i'' \in Y_i$, for all $i=1,\ldots,N$.

Let $U:\Delta\rightarrow \R$ be the multilinear extension of $u$ defined by
\begin{equation} \label{def_potential_fun1}
U(\sigma_1,\ldots,\sigma_N) = \sum_{y\in Y} u(y)\sigma_1(y_1)\cdots\sigma(y_N).
\end{equation}
The function $U$ may be seen as giving the expected value of $u$ under the mixed strategy $\sigma$. We refer to $U$ as the \emph{potential function} and to $u$ as the \emph{pure form of the potential function}.


Using the definitions of $U_i$ and $U$ it is straightforward to verify that
$$
\BR_i(\sigma_{-i}) := \arg\max_{\sigma_i\in\Delta_i} U_i(\sigma_i,\sigma_{-i}) = \arg\max_{\sigma_i\in\Delta_i} U(\sigma_i,\sigma_{-i}).
$$
Thus, in order to compute the best response set we only require knowledge of the potential function $U$, not necessarily the individual utility functions $(U_i)_{i=1,\ldots,N}$.


By way of notation, given a pure strategy $y_i \in Y_i$ and a mixed strategy $\sigma_{-i} \in \Delta_{-i}$, we will write $U(y_i,\sigma_{-i})$ to indicate the value of $U$ when player $i$ uses a mixed strategy placing all weight on the $y_i$ and the remaining players use the strategy $\sigma_{-i}\in \Delta_{-i}$.

Given a $\sigma_i \in \Delta_i$, let $\sigma_i^k$ denote value of the $k$-th entry in $\sigma_i$, so that $\sigma_i = (\sigma_i^k)_{k=1}^{K_i}$.
Since the potential function is linear in each $\sigma_i$, if we fix any $i=1,\ldots,N$ we may express it as
\begin{equation} \label{eq_potential_expanded_form}
U(\sigma) = \sum_{k=1}^{K_i} \sigma_i^k U(y_i^k,\sigma_{-i}).
\end{equation}


In order to study learning dynamics without being (directly) encumbered by the hyperplane constraint inherent in $\Delta_i$ we define
$$
X_i := \{x_i\in \R^{K_i-1}:~ 0\leq x_i^k\leq 1 \mbox{ for } k=1,\ldots,K_i-1, \mbox{ and } \sum_{k=1}^{K_i-1}x_i^k \leq 1\},
$$
where we use the convention that $x_i^k$ denotes the $k$-th entry in $x_i$ so that $x_i = (x_i^k)_{k=1}^{K_i-1}$.

Given $x_i\in X_i$ define the bijective mapping $T_i:X_i\rightarrow \Delta_i$ as $T_i(x_i) = \sigma_i$ for the unique $\sigma_i\in \Delta_i$ such that $\sigma_i^{k} = x_i^{k-1}$ for $k=2,\ldots,K_i$ and $\sigma_i^1 = 1-\sum_{k=1}^{K_i-1} x_i^k$. For $k=1,\ldots,K_i$ let $T_i^k$ be the $k$-th component map of $T_i$ so that $T_i = (T_i^k)_{i=1}^{K_i}$.



Let $X := X_1\times\cdots\times X_N$ and let $T:X\rightarrow \Delta$ be the bijection given by $T = T_1\times\cdots\times T_N$. In an abuse of terminology, we sometimes refer to $X$ as the \emph{mixed-strategy space} of $\Gamma$. When convenient, given an $x\in X$ we use the notation $x_{-i}$ to denote the tuple $(x_j)_{j\not= i}$. Letting $X_{-i} := \prod_{j\not = i} X_j$, we define $T_{-i}:X_{-i} \rightarrow \Delta_{-i}$ as $T_{-i} := (T_j)_{j\not= i}$. Let
\begin{equation}\label{def_kappa}
\kappa := \sum_{i=1}^N (|Y_i|-1)
\end{equation}
denote the dimension of $X$, and note that $\kappa\not= K$, where $K$, defined earlier, is the cardinality of the joint pure strategy set $Y$.

Throughout the paper we often find it convenient to work in $X$ rather than $\Delta$.
In order to keep the notation as simple as possible we overload the definitions of some symbols when the meaning can be clearly derived from the context. In particular, let $\BR_i:X_{-i} \rightrightarrows X_i$ be defined by $\BR_i(x_{-i}) := \{x_i\in X_i:~ \BR_i(\sigma_{-i}) = \sigma_i,~ \sigma_i\in \Delta_i,~ \sigma_{-i} \in \Delta_{-i},~ \sigma_i = T_i(x_i),~ \sigma_{-i} = T_{-i}(x_{-i})\}$.
Similarly, given an $x\in X$ we abuse notation and write $U(x)$ instead of $U(T(x))$.

Given a pure strategy $y_i\in Y_i$, we will write $U(y_i,x_{-i})$ to indicate the value of $U$ when player $i$ uses a mixed strategy placing all weight on the $y_i$ and the remaining players use the strategy $x_{-i}\in X_{-i}$.
Similarly, we will say $y_i^k \in \BR_i(x_{-i})$ if there exists an $x_i\in \BR_i(x_{-i})$ such that $T_i(x_i)$ places weight one on $y_i^k$.

Applying the definition of $T_i$ to \eqref{eq_potential_expanded_form} we see that $U(x)$ may also be expressed as
\begin{equation} \label{eq_potential_expanded_form2}
U(x) = \sum_{k=1}^{K_i-1}x_i^k U(y_i^{k+1},x_{-i}) + \left(1-\sum_{k=1}^{K_i-1}x_i^k\right)U(y_i^{1},x_{-i}).
\end{equation}
for any $i=1,\ldots,N$.

We use the following nomenclature to refer to strategies in $X$.
\begin{mydef} \label{def_equilibria_nomenclature}
(i) A strategy $x\in X$ is said to be \emph{pure} if $T(x)$ places all its mass on a single action tuple $y\in Y$.\\
(ii) A strategy $x\in X$ is said to be \emph{completely mixed} if $x$ is in the interior of $X$.\\
(iii) In all other cases, a strategy $x\in X$ is said to be \emph{incompletely mixed}.
\end{mydef}


\subsubsection{Other Notation}
Other notation as used throughout the paper is as follows.
\begin{itemize}
\item $\N:=\{1,2,\ldots\}$.
\item $\nabla_{x_i}U(x) := (\frac{\partial U}{\partial x_i^k}(x))_{k=1}^{K_i-1}$ gives the gradient of $U$ with respect to the strategy of player $i$ only. $\nabla U(x) := (\frac{\partial U}{\partial x_i^k}(x))_{\substack{i=1,\ldots,N\\ k=1,\ldots,K_i-1}}$ gives the full gradient of $U$.
\item Suppose $m,n,p\in \N$,  $F_i:\R^m\times \R^n \rightarrow \R$, for $i=1,\ldots,p$. Suppose further that $F: \R^m\times \R^n \rightarrow \R^p$ is given by $F(w,z) = (F_i(w,z))_{i=1,\ldots,p}$. Then the operator $D_w$ gives the Jacobian of $F$ with respect to the components of $w=(w_k)_{k=1,\ldots,m}$; that is
    $$
    D_w F(w,z) =
    \begin{pmatrix}
    \frac{\partial F_1(w,z)}{\partial w_1} \cdots \frac{\partial F_1(w,z)}{\partial w_m}\\
    \vdots \quad \ddots \quad \vdots\\
    \frac{\partial F_p(w,z)}{\partial w_1} \cdots \frac{\partial F_p(w,z)}{\partial w_m}\\
    \end{pmatrix}.
    $$
\item $A^c$ denotes the complement of a set $A$, and $\mathring{A}$ denotes the interior of $A$, and $\cl A$ denotes the closure of $A$.
\item The support of a function $f:\Omega\to\R$ is given by $\spt(f):=\{x\in \Omega:f(x)\not= 0\}$.
\item Given a function $f$, $\mathcal{D}(f)$ refers to the domain of $f$ and $\mathcal{R}(f)$ to the range of $f$.
\item $\calL^n$, $n\in \{1,2,\ldots\}$ refers to the $n$-dimensional Lebesgue measure.
\item Given an open set $\Omega \subset \R^n$, and $k\in \N\cup\{\infty\}$, $C_c^k(\Omega)$ denotes the set of $k$-times differentiable functions with compact support in $\Omega$.
\end{itemize}

\subsection{Best Response Dynamics} \label{sec_BR_dynamics_def}
We consider solutions of \eqref{def_FP_autonomous} in the following sense.
\begin{mydef}
We say that an absolutely-continuous mapping $\vx:\R\rightarrow X$ is a solution to \eqref{def_FP_autonomous} (or a \emph{best-response process}) with initial condition $x_0\in X$ if $\vx(0) = x_0$ and \eqref{def_FP_autonomous} holds for almost every $t\in \R$.
\end{mydef}
Since the right hand side of \eqref{def_FP_autonomous} is a set-valued map that is upper semi-continuous with non-empty, compact, convex values, and is locally bounded, solutions in this sense are guaranteed to exist \cite{Filippov}.


\subsection{Illustrative Example: BR Dynamics in a $2\times 2$ Game} \label{sec_example}


The following simple example illustrates several important properties of BR dynamics.
\begin{example} \label{example_FP1}
Consider the two-player two-action game with the following payoffs.
\begin{center}
\vspace{.2cm}

\renewcommand{\arraystretch}{1.2}
\begin{tabular} {r|c|c|}
\multicolumn{3}{c}{\hspace{1.5em} $A$\hspace{1.4em} $B$}\\
\cline{2-3}
$A$ & $1,~1$ & $0,~0$\\
\cline{2-3}
$B$ & $0,~0$ & $2,~2$\\
\cline{2-3}
\end{tabular}
\renewcommand{\arraystretch}{1.0}
\vspace{.4cm}

\end{center}
Since players share identical payoffs, this is a potential game. The BR-dynamics vector field for this game is illustrated in Figure \ref{fig:FP2by2_1} (where $x_i$ denotes the probability of player $i$ playing action $B$). Trajectories can only reach or converge to the mixed equilibrium $((1/3),(1/3))$ from a one-dimensional surface (stable manifold); this is illustrated in Figure \ref{fig:FP2by2_2}. Furthermore, trajectories starting on the stable manifold will reach the equilibrium in finite time.
Since uniqueness of solutions is lost once the mixed equilibrium is reached, solutions starting on this surface are not unique.
\end{example}

\begin{figure}[h]
    \centering
    \begin{minipage}{.45\textwidth}
        \includegraphics[height=.85\textwidth]{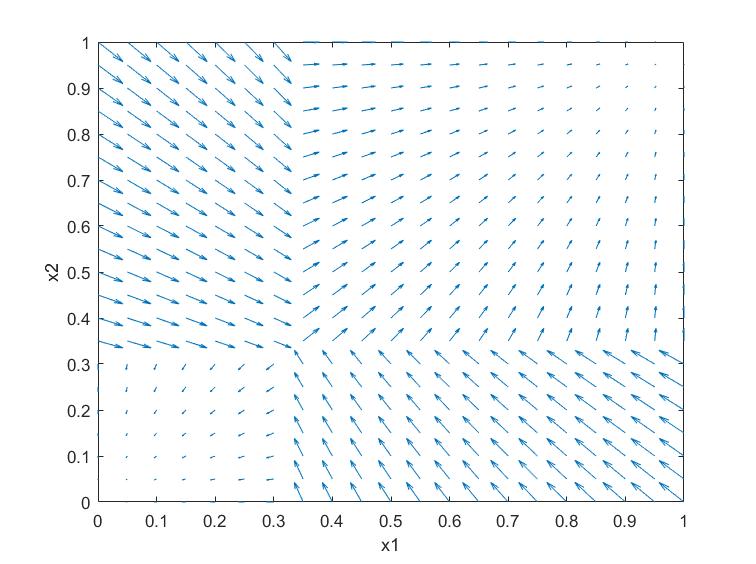}
        \caption{\small BR-dynamics vector field in Example \ref{example_FP1}.\\ $~$}
        \label{fig:FP2by2_1}
    \end{minipage}
    \hspace{.8cm}
    \begin{minipage}{.45\textwidth}
        \includegraphics[height=.85\textwidth]{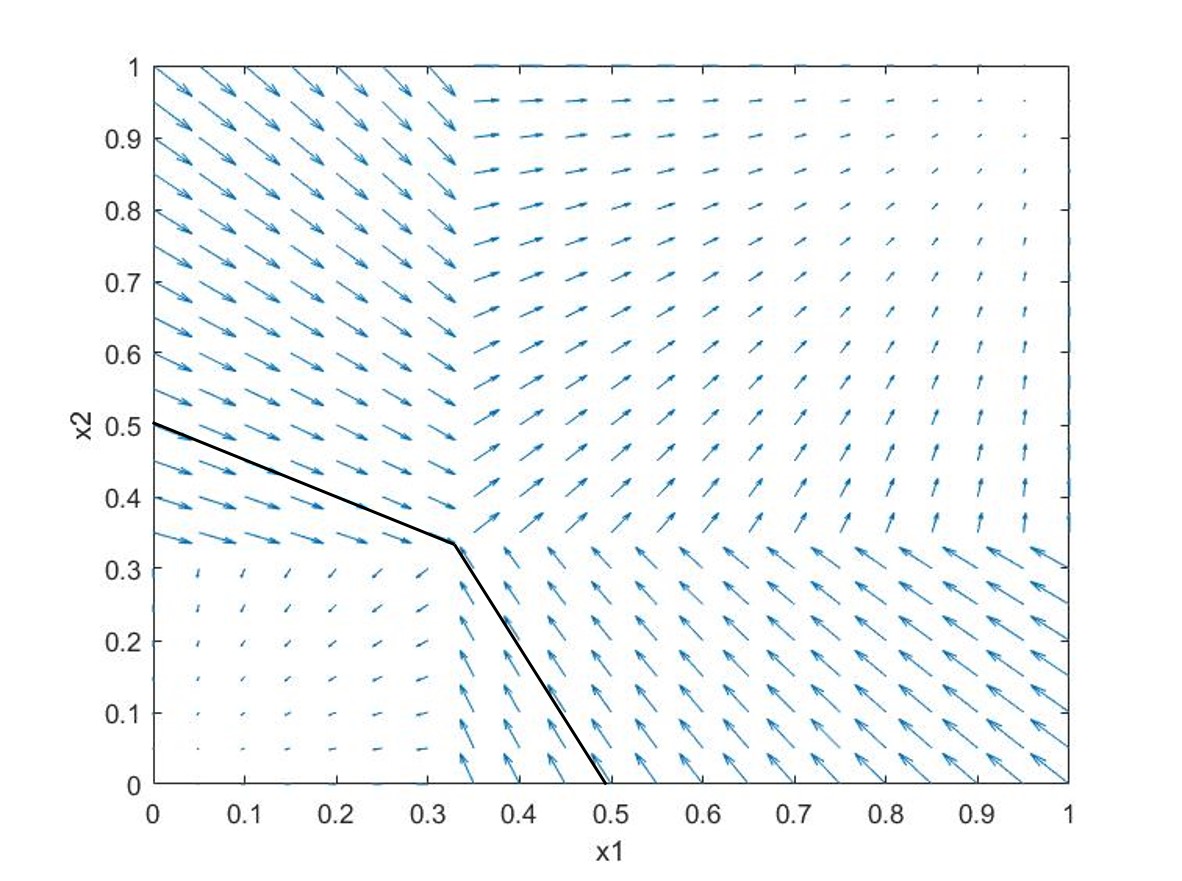}
        \caption{\small Stable manifold of the mixed equilibrium in Example \ref{example_FP1}.}
        \label{fig:FP2by2_2}
    \end{minipage}
\end{figure}




Two-player two-action games, such as the above example, possess a simple geometric structure \cite{metrick1994fictitious}, and it is relatively straightforward to see that, so long as the game is ``non-degenerate''\footnote{See, e.g., \cite{Mond01} Section 2 for a discussion of non-degenerate $2\times 2$ games.} the following properties hold for BR-dynamics in $2\times 2$ games:

\begin{property} \label{prop1}
Solution curves can only reach mixed equilibria from a set of measure zero.
\end{property}

\begin{property}\label{prop2}
Trajectories always converge to mixed equilibria in finite time.
\end{property}

\begin{property}\label{prop3}
Though solutions are not generally unique, they are unique from \emph{almost every} initial condition.
\end{property}

Our results generalize this intuition to potential games of arbitrary size.
Foremost, Theorem \ref{thrm_main_result} shows that Property \ref{prop1} holds for BR dynamics in any regular potential game.\footnote{More precisely, Since BR dynamics are guaranteed to converge to the set of NE in potential games \cite{benaim2005stochastic}, Property \ref{prop1} is equivalent to the statement of Theorem \ref{thrm_main_result}.} Proposition \ref{prop_infinite_time_convergence} shows that Property \ref{prop2} holds for BR dynamics in any regular potential game, and Proposition \ref{prop_uniqueness} shows that Property \ref{prop3} holds for BR dynamics in any regular potential game.

\section{Regular Potential Games} \label{sec_reg_games}
The notion of a regular equilibrium was introduced by Harsanyi \cite{harsanyi1973oddness}. Regular equilibria posses a variety of desirable robustness properties \cite{van1991stability}.

Being a rather stringent refinement concept, not all games possess regular equilibria. However, ``most'' games do. A game is said to be \emph{regular} if all equilibria in the game are regular. Harsanyi \cite{harsanyi1973oddness} showed that almost all $N$-player games are regular.

The set of potential games forms a low dimensional (Lebesgue-measure-zero) subspace within the space of all games. Thus,
Harsanyi's regularity result is inconclusive about the prevalence of regular games within the subset of potential games.
In a companion paper \cite{swenson2017regular}, we study this issue and show that ``most'' potential games are regular (see \cite{swenson2017regular}, Theorem 1).

In this paper we will study the behavior of BR dynamics in regular potential games. The purpose of this restriction is twofold. First, there are degenerate potential games in which BR dynamics do not converge for almost all initial conditions. Restricting attention to regular potential games ensures that the game is not degenerate in this sense.
Second, analysis of the behavior of the BR dynamics is easier near equilibria that are regular. Regularity permits us to characterize the fundamental properties of the potential function $U$ without needing to look at anything higher than second order terms in the Taylor series expansion of $U$. This substantially simplifies the analysis.

If $x^*$ is a regular equilibrium of a potential game, then the derivatives of potential function can be shown to satisfy two non-degeneracy conditions at $x^*$. The first condition deals with the gradient of the potential function at $x^*$ and is referred to as the \emph{first-order} condition; the second condition deals with the Hessian of the potential function at $x^*$ and is referred to as the \emph{second-order} condition.
These conditions, introduced in Sections \ref{sec_first_order_degeneracy}--\ref{sec_second_order_degeneracy} below, will be crucial in the subsequent analysis.


\subsection{First-Order Degeneracy} \label{sec_first_order_degeneracy}
Let $\Gamma$ be a potential game with potential function $U$. Following Harsanyi \cite{harsanyi1973oddness}, we will define the carrier set of an element $x\in X$, a natural modification of a support set to the present context.
For $x_i \in X_i$
let
$$
\carr_i(x_i) := \spt( T_i(x_i)) \subseteq Y_i
$$
and for $x = (x_1,\ldots,x_N)\in X$ let $\carr(x) := \carr_1(x_1)\cup\cdots\cup\carr_N(x_N)$.

Let $C=C_1\cup\cdots\cup C_N$, where for each $i=1,\ldots,N$, $C_i$ is a nonempty subset of $Y_i$. We say that $C$ is the carrier for $x=(x_1,\ldots,x_N)\in X$ if $C_i = \carr_i(x_i)$ for $i=1,\ldots,N$ (or equivalently, if $C=\carr(x)$).

Let $\gamma_i := |C_i|$
and assume that the strategy set $Y_i$, is reordered so that $C_i = \{y_i^1,\ldots,y_i^{\gamma_i}\}$. Under this ordering, the first $\gamma_i-1$ components of any strategy $x_i$ with $\carr_i(x_i) = C_i$ are free (not constrained to zero by $C_i$) and the remaining components of $x_i$ are constrained to zero. That is $(x_i^k)_{k=1}^{\gamma_i-1}$ is free under $C_i$ and $(x_i^k)_{k=\gamma_i}^{K_i}=0$.
The set of strategies $\{x\in X:~\carr(x) = C\}$ is precisely the interior of the face of $X$ given by
\begin{equation}\label{def_C_face}
\Omega_C :=\{x\in X:~ x_i^k = 0,~ k=\gamma_i,\ldots,K_i-1,~i=1,\ldots,N \}.
\end{equation}

Let $x^*$ be an equilibrium with carrier $C$. We say that $x^*$ is \emph{first-order degenerate} if there exists a pair $(i,k)$, $i=1,\ldots,N$, $k=\gamma_i,\ldots,K_i-1$ such that $\frac{\partial U(x^*)}{\partial x_i^k}=0$, and we say $x^*$ is \emph{first-order non-degenerate} otherwise.

\begin{remark} \label{remark_first_order_equivalent_condition}
We note that using the multi-linearity of $U$, it is straightforward to verify that an equilibrium is first order non-degenerate if and only if it is quasi-strong, as introduced by Harsanyi \cite{harsanyi1973oddness} (see also \cite{van1991stability}). In particular, an equilibrium $x^*$ is first-order degenerate if and only if $\carr_i(x_i^*) \subsetneq \BR_i(x_{-i}^*)$ for some $i=1,\ldots,N$.  We prefer to use the term first order non-degenerate since it emphasizes that we are concerned with the gradient of the potential function and it keeps nomenclature consistent with the notion of second-order non-degeneracy, introduced next.
\end{remark}


\subsection{Second-Order Degeneracy} \label{sec_second_order_degeneracy}
Let $C$ be some carrier set. Let $\tilde{N}:=|\{i=1,\ldots,N:~\gamma_i \geq 2\}|$, and assume that the player set is ordered so that $\gamma_i\geq 2$ for $i=1,\ldots,\tilde{N}$. Under this ordering, for strategies with $\carr(x) = C$, the first $\tilde{N}$ players use mixed strategies and the remaining players use pure strategies.
Assume that $\tilde{N}\geq 1$ so that any $x$ with carrier $C$ is a mixed (not pure) strategy.

Let the Hessian of $U$ taken with respect to $C$ be given by
\begin{equation} \label{def_mixed_hessian}
\tilde{\vH}(x) :=\left( \frac{\partial^2 U(x)}{\partial x_i^k \partial x_j^\ell} \right)_{\substack{i,j=1,\ldots,\tilde{N},\\ k=1,\ldots,\gamma_i-1,\\ \ell=1,\ldots,\gamma_j-1}}.
\end{equation}
Note that this definition of the Hessian restricts attention to the components of $x$ that are free under $C$

We say an equilibrium $x^* \in X$ is \emph{second-order degenerate} if the Hessian
$\tilde{\vH}(x^*)$ taken with respect to $\carr(x^*)$
is singular, and we say $x^*$ is \emph{second-order non-degenerate} otherwise.

\begin{remark}
Note that both forms of degeneracy are concerned with the interaction of the potential function and the ``face'' of the strategy space containing the equilibrium $x^*$. If $x^*$ touches one or more constraints, then first-order non-degeneracy ensures that the gradient of the potential function is nonzero normal to the face $\Omega_{\carr(x^*)}$, defined in \eqref{def_C_face}. Second-order non-degeneracy ensures that, restricting $U$ to the face $\Omega_{\carr(x^*)}$, the Hessian of $U \big\vert_{\Omega_{\carr(x^*)}}$ is non-singular.
If $x^*$ is contained within the interior of $X$, then the first-order condition becomes moot and the second-order condition reduces to the standard definition of a non-degenerate critical point.
\end{remark}
\begin{remark}
Note that if an equilibrium $x^*$ is (first or second-order) degenerate with respect to some potential function $U$ for the game $\Gamma$, then it is likewise degenerate for every other admissible potential function for $\Gamma$. This justifies our usage of an arbitrary potential function $U$ associated with $\Gamma$ in the definitions of first and second order degeneracy.
\end{remark}

Throughout the paper we will study regular potential games.  The following lemma from \cite{swenson2017regular} shows that, in any regular potential game, all equilibria are first and second-order non-degenerate.
\begin{lemma} [\cite{swenson2017regular}, Lemma 12] \label{lemma_non_degen_to_regular}
Let $\Gamma$ be a potential game.
An equilibrium $x^*$ is regular if and only if it is both first and second-order non-degenerate.
\end{lemma}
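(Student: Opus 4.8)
The plan is to unwind both notions—regularity in the sense of Harsanyi and the combination of first- and second-order non-degeneracy defined above—into explicit conditions on the potential function $U$ and its derivatives at the equilibrium $x^*$, and then show the two sets of conditions coincide. A regular equilibrium, by Harsanyi's definition, is one at which (a) the equilibrium is quasi-strong, i.e.\ the carrier of each player's strategy equals that player's best-response set, and (b) a certain Jacobian governing the equilibrium conditions is non-singular. Remark~\ref{remark_first_order_equivalent_condition} already hands us the equivalence between first-order non-degeneracy and the quasi-strong condition (a), so the essential content of the lemma is to identify the Jacobian non-singularity condition (b) with the second-order non-degeneracy condition, namely invertibility of the restricted Hessian $\tilde{\vH}(x^*)$ from \eqref{def_mixed_hessian}.

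First I would fix the carrier $C = \carr(x^*)$, reorder actions and players as in Sections~\ref{sec_first_order_degeneracy}--\ref{sec_second_order_degeneracy} so that the free coordinates are exactly $(x_i^k)$ for $i=1,\dots,\tilde N$, $k=1,\dots,\gamma_i-1$, and write out the defining equations of a regular equilibrium in these coordinates. The key simplification, which I would emphasize, is that in a \emph{potential} game the equilibrium and indifference conditions can all be phrased through the single function $U$ rather than through the individual $U_i$: by the identity $\BR_i(\sigma_{-i}) = \arg\max_{\sigma_i} U(\sigma_i,\sigma_{-i})$ noted in Section~\ref{sec_notation}, player $i$'s indifference across the actions in $C_i$ is exactly the statement that the partial derivatives $\partial U/\partial x_i^k$ vanish for the free coordinates. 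Differentiating these stationarity conditions with respect to the free coordinates produces precisely the block Hessian $\tilde{\vH}(x^*)$, so the Jacobian of the equilibrium map restricted to the face $\Omega_C$ is $\tilde{\vH}(x^*)$ up to a change of variables coming from the bijections $T_i$.

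The main technical step is therefore to verify that Harsanyi's regularity Jacobian—which is built from the derivatives of each player's indifference and best-response conditions and, for a general game, need not be symmetric—reduces, in the potential-game setting, to the symmetric matrix $\tilde{\vH}(x^*)$ together with the transversality data already captured by the first-order condition. Here I would exploit the weighted-potential structure: the positive weights $(w_i)$ rescale each player's equations but do not affect the rank of the relevant Jacobian, so the non-degeneracy of Harsanyi's matrix is equivalent to $\det \tilde{\vH}(x^*) \neq 0$. The off-face directions (coordinates $x_i^k$ with $k \geq \gamma_i$) contribute diagonally through the first-order condition, which guarantees the associated gradient components are nonzero and hence that those directions do not introduce additional singularity; this is why both conditions are needed, and why, when $x^*$ is interior ($\tilde N = N$ with all $\gamma_i = K_i$), only the second-order condition survives, exactly as the second remark predicts.

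I expect the main obstacle to be bookkeeping rather than conceptual: carefully matching Harsanyi's regularity formulation (stated for general games, in terms of the $U_i$ and mixed-strategy simplices $\Delta_i$) against the present coordinate system on $X$, and confirming that the block decomposition into ``free'' directions tangent to $\Omega_C$ and ``constrained'' directions normal to it cleanly separates the second-order and first-order conditions. Since this is established in full in the companion paper, I would cite \cite{swenson2017regular} for the detailed identification of the Jacobians and present here only the reduction showing that, for a weighted potential game, regularity is equivalent to $\carr_i(x_i^*) = \BR_i(x_{-i}^*)$ for all $i$ together with $\det \tilde{\vH}(x^*) \neq 0$, i.e.\ to first- and second-order non-degeneracy.
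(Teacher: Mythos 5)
Your outline is sound and follows essentially the same route as the actual proof: note that this paper contains no proof of Lemma~\ref{lemma_non_degen_to_regular} at all---it is imported verbatim from \cite{swenson2017regular}---and the argument there is exactly the reduction you sketch, namely that Harsanyi's Jacobian is block triangular at $x^*$, with the non-carrier coordinates contributing diagonal entries equal (up to the positive weights $w_i$ and the positive equilibrium probabilities $\sigma_i^{*k}$, which scale rows without affecting rank) to the gradient components $\frac{\partial U(x^*)}{\partial x_i^k}$, $k\geq\gamma_i$, and the carrier block reducing to $\tilde{\vH}(x^*)$, so that nonsingularity of the full Jacobian factors precisely into first- plus second-order non-degeneracy. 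The one definitional nuance worth fixing is that Harsanyi defines regularity solely as nonsingularity of that Jacobian; quasi-strongness is not a separate clause (a) of the definition but a consequence of the same block-triangular factorization you invoke, so your framing is redundant rather than wrong.
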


\section{Potential Production Inequalities} \label{sec_inequalities}
In this section we prove two key inequalities (\eqref{equation_pot_inequality1_1} and \eqref{equation_pot_inequality2_1}) that are the backbone of our proof of Theorem \ref{thrm_main_result}.

We note that in proving Theorem \ref{thrm_main_result} there is a fundamental dichotomy between studying completely mixed equilibria and incompletely mixed equilibria.
Completely mixed equilibria lie in the interior of the strategy space. At these points the gradient of the potential function is zero and the Hessian is non-singular; local analysis of the dynamics is relatively easy. On the other hand, incompletely mixed equilibria necessarily lie on the boundary of $X$ and the potential function may have a nonzero gradient at these points.\footnote{We note that in games that are first-order non-degenerate, the gradient is always non-zero at incompletely mixed equilibria.} Analysis of the dynamics around these points is fundamentally more delicate.

In order to handle incompletely mixed equilibria we construct a nonlinear projection whose range is a lower dimensional game in which the image of the equilibrium under consideration is completely mixed. This allows us to handle both types of mixed equilibria in a unified manner.

\subsection{Projection to a Lower-Dimensional Game} \label{sec_diff_ineq_prelims}
Let $x^*$ be a mixed equilibrium.\footnote{We note that $x^*$ is assumed to be fixed throughout the section and many of the subsequently defined terms are implicitly dependent on $x^*$.} Let $C_i = \carr_i(x^*_i)$, where $x_i^*$ is the player-$i$ component of $x^*$, let $C=C_1\cup\cdots\cup C_N =\carr(x^*)$, and assume that $Y_i$ is ordered so that $\{y_i^1,\ldots,y_i^{\gamma_i}\} = C_i$.
Let $\gamma_i = |C_i|$, let $\tilde{N} :=\left|\big\{i\in\{1,\ldots,N\}:~\gamma_i \geq 2\big\}\right|$, and assume that the player set is ordered so that $\gamma_i \geq 2$ for $i=1,\ldots,\tilde{N}$.
Since $x^*$ is assumed to be a mixed-strategy equilibrium, we have $\tilde{N} \geq 1$.

Given an $x\in X$, we will frequently use the decomposition $x=(x_p,x_m)$, where $x_m := (x_i^k)_{i=1,\ldots,\tilde{N},~k=1,\ldots,\gamma_i-1}$ and $x_p$ contains the remaining components of $x$.\footnote{The subscript in $x_m$ is suggestive of ``mixed-strategy components'' and the subscript in $x_p$ is suggestive of ``pure-strategy components''. Furthermore, it is convenient to note that under the assumed ordering $\{y_i^1,\ldots,y_i^{\gamma_i}\} = C_i$ we have $x_p^*=0$; i.e., the pure strategy component at the equilibrium is equal to the null vector.} Let
$\gamma := \sum_{i=1}^N (\gamma_i-1).$
Recalling that $\kappa$ is the dimension of $X$ (see \eqref{def_kappa}), note that for $x\in X$ we have $x\in \R^\kappa$, $x_m\in\R^{\gamma}$, and $x_p\in \R^{\kappa-\gamma}$.

The set of joint pure strategies $Y$ may be expressed as an ordered set $Y=\{y^1,\ldots,y^K\}$ where each element $y^\tau\in Y$, $\tau\in\{1,\ldots,K\}$ is an $N$-tuple of strategies.
For each pure strategy $y^{\tau}\in Y$, $\tau=1,\ldots,K$, let $u^{\tau}$ denote the pure-strategy potential associated with playing $y^\tau$; that is, $u^\tau := u(y^\tau)$, where $u$ is the pure form of the potential function defined in Section \ref{sec_prelims}. A vector of \emph{potential coefficients} $u=(u^\tau)_{\tau=1}^K$ is an element of $\R^K$.

Given a vector of potential coefficients $u\in \R^K$ and a strategy $x\in X$,
let\footnote{We note that the functions $F_i^k$ and $F$ defined here are identical to those defined in (12) and (13) of \cite{swenson2017regular}, and used extensively throughout \cite{swenson2017regular}.}
\begin{equation} \label{def_F}
F_i^k(x,u):= \frac{\partial U(x)}{\partial x_i^k},
\end{equation}
for $i=1,\ldots,\tilde{N},~k=1,\ldots,\gamma_i-1$, and let
%
%
%
$$F(x,u) := \left( F_i^k(x,u)\right)_{\substack{i=1,\ldots,\tilde{N}\\ k=1,\ldots,\gamma_i-1}} =
\left( \frac{\partial U(x)}{\partial x_i^k}\right)_{\substack{i=1,\ldots,\tilde{N}\\ k=1,\ldots,\gamma_i-1}}.$$
Differentiating \eqref{eq_potential_expanded_form2} we see that at the equilibrium $x^*$ we have $\frac{\partial U(x^*)}{\partial x_i^k} = 0$ for $i=1,\ldots,\tilde{N}$, $k=1,\ldots,\gamma_i-1$ (see Lemma \ref{lemma_carrier_vs_gradient} in appendix), or equivalently,
$$
F(x^*,u) = F(x^*_p,x^*_m,u) = 0.
$$

By Definition \ref{def_equilibria_nomenclature}, the (mixed) equilibrium $x^*$ is completely mixed if $\gamma=\kappa$, and is incompletely mixed otherwise.
Suppose $\gamma<\kappa$ so that $x^*$ is incompletely mixed. Let $\vJ(x) := D_{x_m}F(x_p,x_m,u)$ and note that by definition we have $\vJ(x^*) = \tilde{\vH}(x^*)$.

Since $\Gamma$ is assumed to be a non-degenerate game, $\vJ(x^*)$ is invertible. By the implicit function theorem, there exists a function $g:\calD(g)\rightarrow \R^{\gamma}$ such that $F(x_p,g(x_p),u)$ $ = 0$ for all $x_p$ in a neighborhood of $x^*_p$, where  $\calD(g)\subset \R^{\kappa-\gamma}$ denotes the domain of $g$, $x_p^*\in\calD(g)$, and $\calD(g)$ is open.

The graph of $g$ is given by
$$\mbox{Graph}(g):=\{x\in X :~ x=(x_p,x_m),~ x_p \in \mathcal{D}(g),~ x_m = g(x_p)\}.$$
Note that $\Gr(g)$ is a smooth manifold with Hausdorff dimension $(\kappa-\gamma)$ \cite{EvansGariepy}.
An intuitive interpretation of $\Gr(g)$ is given in Remark \ref{remark_g}.

If $\Gamma$ is a non-degenerate potential game then, using the multilinarity of $U$, we see that $\gamma \geq 2$ (see Lemma \ref{lemma_two_players_mixing} in appendix). This implies that
\begin{equation}\label{eq_graph_size}
\Gr(g) \mbox{ has Hausdorff dimension at most }  (\kappa-2).
\end{equation}

Let $\Omega:=\Omega_{C}$, where $\Omega_{C}$ is defined in \eqref{def_C_face},
denote the face of $X$ containing $x^*$.
Define the mapping $\tilde \calP:\calD(\tilde{\calP}) \rightarrow \Omega$, with domain $\calD(\tilde{\calP}) := \{x=(x_p,x_m)\in X:~ x_p\in\calD(g)\}$, as follows. If $x^*$ is completely mixed then let $\tilde \calP(x) := x$ be the identity. Otherwise, let
\begin{equation} \label{def_calPtilde_map}
\tilde \calP(x) := x^*+\left(x - (x_p,g(x_p)) \right).
\end{equation}
Let $\tilde \calP_i^k(x)$ be the $(i,k)$-th coordinate map of $\tilde \calP$, so that $\tilde \calP = (\tilde \calP_i^k)_{\substack{i=1,\ldots,N\\ k=1,\ldots,K_i-1}}$.
Following the definitions, it is simple to verify that for $x\in \mathcal{D}(\tilde \calP)$ we have $\tilde \calP_i^k(x) = 0$ for all $(i,k)$ with $k\geq \gamma_i$, and hence $\tilde \calP$ indeed maps into $\Omega$.

Let $\tilde X_i := \{\tilde x_i\in \R^{\gamma_i-1}:~ \tilde x_i^k \geq 0, k=1,\ldots,\gamma_i-1,~\sum_{k=1}^{\gamma_i-1} \tilde x_i^k \leq 1 \}$, $i=1,\ldots,\tilde{N}$, and let $\tilde X := \tilde X_1\times\cdots\times \tilde X_{\tilde{N}}$.
Let $\calP:\calD(\calP)\rightarrow \tilde X$ with domain $\calD(\calP) = \calD(\tilde{\calP}) \subset X$ be given by
\begin{equation} \label{def_calP_map}
\calP := (\tilde \calP_i^k)_{i=1,\ldots,\tilde{N},~k=1,\ldots,\gamma_i-1}.
\end{equation}
Note that $\calP$ contains the components of $\tilde \calP$ not constrained to zero. As we will see in the following section, $\calP$ may be interpreted as a projection into a lower dimensional game in which $\calP(x^*)$ is a completely mixed equilibrium.


\begin{example} \label{example2}
Consider the two player game with payoff matrix.
\begin{center}
\vspace{.2cm}

\renewcommand{\arraystretch}{1.2}
\begin{tabular} {r|c|c|c|}
\multicolumn{4}{c}{\hspace{1.5em} $A$\hspace{1.4em} $B$ \hspace{1.4em} $C$}\\
\cline{2-4}
$A$ & $1,~1$ & $0,~0$ & $0,~0$ \\
\cline{2-4}
$B$ & $0,~0$ & $2,~2$ & $0,~0$\\
\cline{2-4}
\end{tabular}
\renewcommand{\arraystretch}{1.0}
\vspace{.4cm}

\end{center}

\begin{figure}[h]
    \centering
    \begin{subfigure}{.45\textwidth}
        \includegraphics[height=.85\textwidth]{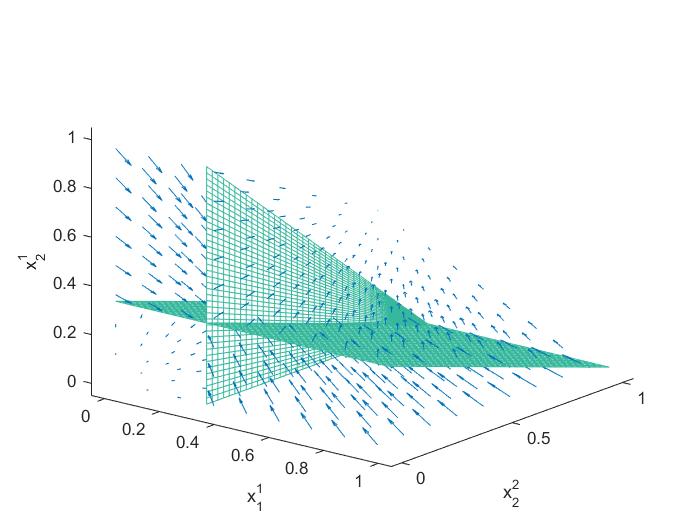}
        \caption{BRD vector field from Example \ref{example2}.}
        \label{fig:gExample1}
    \end{subfigure}
    \hspace{.8cm}
    \begin{subfigure}{.45\textwidth}
        \includegraphics[height=.85\textwidth]{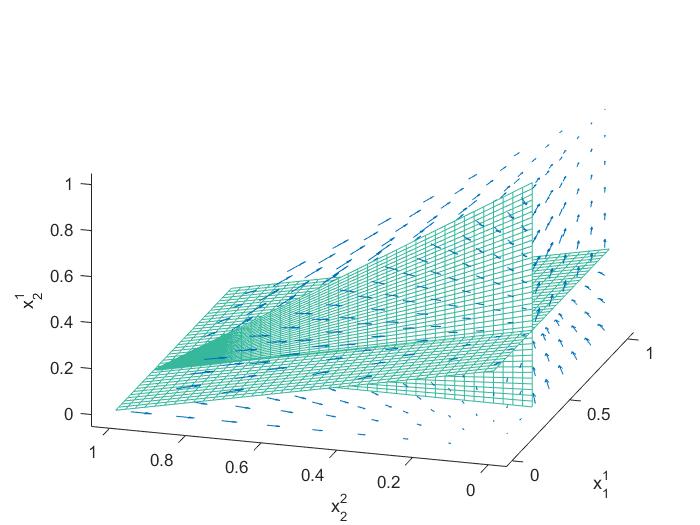}
        \caption{Alternate view of BRD vector field from Example \ref{example2}.}
        \label{fig:gExample2}
    \end{subfigure}
        \begin{subfigure}{.45\textwidth}
        \includegraphics[height=.85\textwidth]{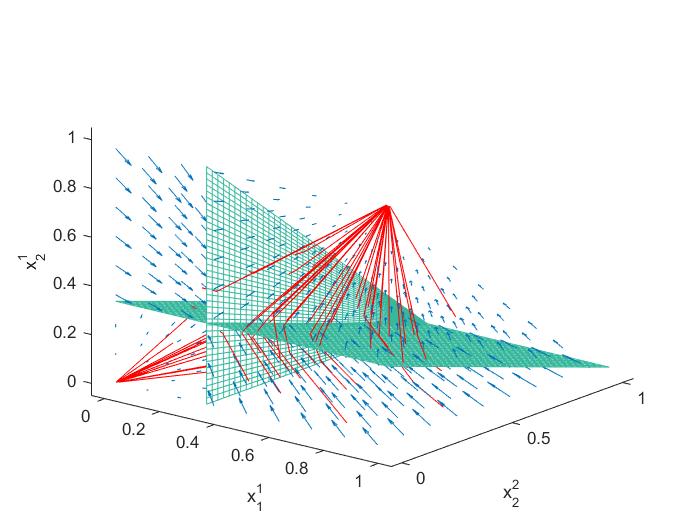}
        \caption{Plot of BR solution curves from Example \ref{example2}.}
        \label{fig:gExample3}
    \end{subfigure}
    \hspace{.8cm}
    \begin{subfigure}{.45\textwidth}
        \includegraphics[height=.85\textwidth]{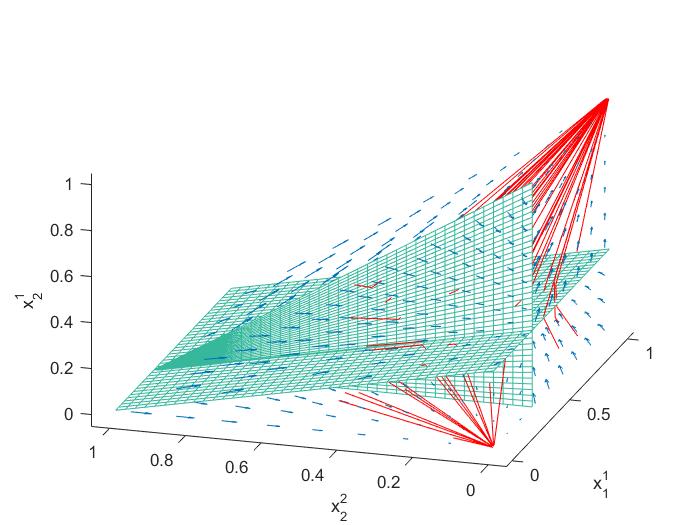}
        \caption{Alternate view of plot of BR solution curves from Example \ref{example2}.}
        \label{fig:gExample4}
    \end{subfigure}
    \caption{}
    \vspace{-1.2em}
\end{figure}

We will refer to the row player as player 1 and the column player as player 2.
Note that this is equivalent to the game in Example \ref{example_FP1} where the player 2 has been given an additional action yielding a uniform payoff of zero for both players.
Following the conventions of Section \ref{sec_notation}, let $x_1^1$ denote the probability of player 1 playing action $B$, and let $x_2^1$ and $x_2^2$ denote the probabilities of player $2$ playing $B$ and $C$, respectively. The mixed strategy space $X$ for this game is a triangular cylinder---a plot of the BRD vector field for this game is shown from two different perspectives in Figures \ref{fig:gExample1}--\ref{fig:gExample2}, where the blue arrows give the direction of the vector field, and the green surfaces represent regions where some player is indifferent between actions (i.e., ``indifference surfaces''). Note that the vector field jumps along these surfaces.

Let $\Omega$ denote the face of $X$ corresponding to $x_2^2 = 0$ (i.e., the face of $X$ when we restrict player 2 to place weight 0 on action $C$). Note that the vector field within $\Omega$ is identical to the  familiar $2\times 2$ vector field from Example \ref{example_FP1}.


Let $x^*$ be the equilibrium $((x_1^1),(x_2^1,x_2^2)) = ((1/3),(1/3,0))$. The graph of the associated function $g$ coincides with intersection of the indifference surfaces emanating from $x^*$ out of $\Omega$. (In general, the graph of $g$ will always correspond to the intersection of indifference surfaces connecting to the equilibrium and extending out of $\Omega$.) The projection $\calP$ projects strategies from $X$ into the face $\Omega$ as defined in \eqref{def_calPtilde_map} and \eqref{def_calP_map}.

Fifty solution curves of the BR dynamics in this game with random initial conditions are plotted in red in Figures \ref{fig:gExample3}--\ref{fig:gExample4}. Note that the solution curves converge to pure equilibria.

\end{example}

\subsection{Inequalities}
Let $\tilde{U}:\tilde X \rightarrow \R$ be given by
$$
\tilde{U}(\tilde x) := U(x_p^*, \tilde x),
$$
where $x^* = (x_p^*,x_m^*)$ is the mixed equilibrium fixed in the beginning of the section.
Let $\tilde{\Gamma}$ be a potential game with player set $\{1,\ldots,\tilde{N}\}$, mixed-strategy space $\tilde X_i$, $i=1,\ldots,\tilde{N}$, and potential function $\tilde{U}$. By construction, $\calP(x^*)$ is a completely mixed equilibrium of $\tilde{\Gamma}$. Moreover, by the definition of a non-degenerate equilibrium, the Hessian of $\tilde{U}$ is invertible at $\calP(x^*)$.

We are interested in studying the projection $\calP(\vx(t))$ of a BR process into the lower dimensional game $\tilde{\Gamma}$.\footnote{In the lower dimensional game $\tilde{\Gamma}$, the dynamics of the projected process are not precisely BR dynamics. However, the behave \emph{nearly} like BR dynamics, which is what allows us to establish these inequalities.}
We wish to show that the following two inequalities hold:\\
(i) For $x$ in a neighborhood of $x^*$
\begin{equation}\label{equation_pot_inequality1_1}
\big\vert \tilde{U}(\calP(x^*)) - \tilde{U}(\calP(x)) \big\vert \leq c_1 d^2(\calP(x),\calP(x^*)),
\end{equation}
for some constant $c_1>0$.\\
(ii) Suppose $(\vx(t))_{t\geq 0}$ is a BR process. For $\vx(t)$ residing in a neighborhood of $x^*$
\begin{equation}\label{equation_pot_inequality2_1}
\frac{d}{dt} \tilde{U}(\calP(\vx(t))) \geq c_2 d(\calP(\vx(t)),\calP(x^*)),
\end{equation}
for some constant $c_2>0$.\footnote{We note that when we write these inequalities, we mean they are satisfied in an integrated sense (e.g., as used in \eqref{eq_inf_time_pf_eq1}--\eqref{eq_inf_time_pf_eq2}). In this section, we treat all of these as pointwise inequalities. A rigorous argument could be constructed using the chain rule in Sobolev spaces (see, for example, \cite{MR2293955}).}

The first inequality follows from Taylor's theorem and the fact that $\nabla \tilde{U}(\calP(x^*)) = 0$.
The following  two sections are devoted to proving \eqref{equation_pot_inequality2_1}.
In order to build intuition and

In Section \ref{sec_completely_mixed_case} we consider the simple case in which $x^*$ is a completely mixed (interior) equilibrium.
Subsequently, in Section \ref{sec_incompletely_mixed_case} we consider the more complicated case in which $x^*$ is an incompletely mixed equilibrium. The basic idea of the proof of \eqref{equation_pot_inequality2_1} in the completely and incompletely mixed cases is the same. However, care must be taken to appropriately handle problems with possible first-order degeneracies occurring at incompletely mixed equilibria.
The reader may wish to skip Section \ref{sec_incompletely_mixed_case} on a first read-through.

\subsection{Proving the Differential Inequality: The Completely Mixed Case} \label{sec_completely_mixed_case}
We begin with Lemma \ref{lemma_IR1} which shows---roughly speaking---that within the interior of the action space, the BR-dynamics vector field approximates the gradient field of the potential function.

The following definitions are useful in the lemma.
For $B\subseteq X$, let $P_{X_i}(B) := \{x_i\in X_i:~ (x_i,x_{-i}) \in B \mbox{ for some } x_{-i} \in X_{-i}\}$ be the projection of $B$ onto $X_i$.
Given an $x_i\in X_i$, let
$$
d(x_i,\partial X_i) := \min\{x_i^1,\ldots,x_i^{K_i-1},1-\sum_{k=1}^{K_i-1} x_i^k\}
$$
denote the distance from $x_i$ to the boundary of $X_i$. Let
$$
d(P_{X_i}(B),\partial X_i) := \inf_{x_i\in P_{X_i}(B)} d(x_i,\partial X_i)
$$
denote the distance between the set $P_{X_i}(B)$ and the boundary of $X_i$.

Since we will eventually be interested in studying a lower-dimensional game derived from $\Gamma$, in the lemma we consider an alternative game $\hat{\Gamma}$ of arbitrary size.
\begin{lemma}\label{lemma_IR1}
Let $\hat{\Gamma}$ be a potential game with player set $\{1,\ldots,\hat{N}\}$, action sets $\hat{Y}_i$, $i=1,\ldots,\hat{N}$, with cardinality $\hat{K}_i:=|\hat{Y}_i|$, and potential function $\hat{U}$. Let $\hat{X}=\hat{X}_1\times\cdots\times \hat{X}_{\hat{N}}$ denote the mixed strategy space.

Let $B\subset \hat{X}$ and fix $i\in\{1,\ldots,\hat{N}\}$.
Then for all $x\in B$ there holds
\begin{equation} \label{IR1_eq0}
z_i\cdot \nabla_{x_i} \hat{U}(x)\geq c \|\nabla_{x_i} \hat{U}(x)\|_1, \quad\quad \forall~ z_i\in \BR_i(x_{-i}) - x_i
\end{equation}
where the constant $c$ is given by $c = d(P_{X_i}(B),\partial \hat{X}_i)$.
\end{lemma}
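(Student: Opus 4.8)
The plan is to exploit the fact that, by multilinearity, the potential $\hat{U}$ is affine in player $i$'s own strategy, so that the best-response direction and the gradient of $\hat{U}$ are tied together through a single ``potential-improvement'' quantity. First I would rewrite the gradient using \eqref{eq_potential_expanded_form2}: differentiating in $x_i^k$ gives $\frac{\partial \hat{U}}{\partial x_i^k}(x) = \hat{U}(y_i^{k+1},x_{-i}) - \hat{U}(y_i^1,x_{-i})$, so $\nabla_{x_i}\hat{U}(x)$ depends only on $x_{-i}$ through the pure-strategy payoffs $b_j := \hat{U}(y_i^j,x_{-i})$, $j=1,\ldots,\hat{K}_i$, and is constant in $x_i$. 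Consequently, for any $x_i'\in \hat{X}_i$ one has $\hat{U}(x_i',x_{-i}) - \hat{U}(x_i,x_{-i}) = (x_i'-x_i)\cdot\nabla_{x_i}\hat{U}(x)$. Taking $x_i'\in\BR_i(x_{-i})$ and $z_i = x_i'-x_i$ then yields the key identity $z_i\cdot\nabla_{x_i}\hat{U}(x) = \max_{\sigma_i}\hat{U}(\sigma_i,x_{-i}) - \hat{U}(x)$, which is manifestly nonnegative and, crucially, independent of which best response $x_i'$ is chosen; this makes the universal quantifier over $z_i$ in \eqref{IR1_eq0} automatic.

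Next I would make the right-hand side explicit. Writing $b^* := \max_j b_j$ and $\sigma_i = T_i(x_i)$, the identity becomes $z_i\cdot\nabla_{x_i}\hat{U}(x) = b^* - \sum_j \sigma_i^j b_j = \sum_j \sigma_i^j(b^*-b_j)$, a sum of nonnegative terms. The definition of the constant $c = d(P_{X_i}(B),\partial\hat{X}_i)$ is exactly the assertion that, uniformly over $x\in B$, every barycentric coordinate of $\sigma_i$ — i.e.\ each $x_i^k$ together with the slack $1-\sum_k x_i^k$ — is at least $c$; equivalently, $\sigma_i^j\geq c$ for every pure strategy $j$. Substituting this uniform floor gives the intermediate estimate $z_i\cdot\nabla_{x_i}\hat{U}(x)\geq c\sum_j (b^*-b_j)$.

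The remaining, and main, step is to dominate the gradient norm by the improvement sum $\sum_j(b^*-b_j)$. The natural tool is the elementary bound $|b_{k+1}-b_1|\leq (b^*-b_1)+(b^*-b_{k+1})$ on each gradient component, which lets one estimate $\|\nabla_{x_i}\hat{U}(x)\|_1 = \sum_k|b_{k+1}-b_1|$ in terms of the nonnegative improvement terms $(b^*-b_j)$. I expect the delicate issue to be obtaining the constant as exactly $c$ rather than $c$ up to a factor depending on $\hat{K}_i$: the $\|\cdot\|_1$ norm singles out the reference action $y_i^1$ asymmetrically, whereas $\sum_j(b^*-b_j)$ treats all pure strategies symmetrically, and the two differ most at points where player $i$ is indifferent among several pure best responses — so that several $b_j$ coincide with $b^*$ and $\BR_i(x_{-i})$ is a whole face of $\hat{X}_i$ rather than a vertex. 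Controlling this gap, leveraging that the uniform floor $c$ bounds every barycentric coordinate of $\sigma_i$ from below, is where I would concentrate the effort to close the chain $z_i\cdot\nabla_{x_i}\hat{U}(x)\geq c\,\|\nabla_{x_i}\hat{U}(x)\|_1$.
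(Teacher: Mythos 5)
Your first two paragraphs are correct, and in fact they are an invariant reformulation of exactly the computation the paper performs: the identity $z_i\cdot \nabla_{x_i}\hat{U}(x)=\sum_j \sigma_i^j\,(b^*-b_j)$ (independent of the choice of best response) together with the uniform floor $\sigma_i^j\geq c$ gives $z_i\cdot\nabla_{x_i}\hat{U}(x)\geq c\sum_j(b^*-b_j)$, which is the paper's display \eqref{IR1_eq3} after the substitution $x_i^k\geq c$. The gap is the step you flagged at the end, and it is worse than delicate: the comparison $\sum_j(b^*-b_j)\geq \|\nabla_{x_i}\hat{U}(x)\|_1$ that you would need is simply \emph{false} in the fixed parametrization you are using, so no amount of effort with the bound $|b_{k+1}-b_1|\leq (b^*-b_{k+1})+(b^*-b_1)$ can close the chain with constant exactly $c$. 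Concretely, take $\hat{K}_i=3$ with $b=(0,1,1)$ and $\sigma_i=(1/3,1/3,1/3)$, and $B=\{x\}$, so $c=1/3$: then $z_i\cdot\nabla_{x_i}\hat{U}(x)=b^*-\hat{U}(x)=1/3$, while $\nabla_{x_i}\hat{U}(x)=(b_2-b_1,\,b_3-b_1)=(1,1)$, so $c\,\|\nabla_{x_i}\hat{U}(x)\|_1=2/3$. In a fixed labeling the sharp constant degrades by a factor of order $\hat{K}_i-1$ (your triangle-inequality route yields $z_i\cdot\nabla_{x_i}\hat{U}(x)\geq \tfrac{c}{\hat{K}_i-1}\|\nabla_{x_i}\hat{U}(x)\|_1$, and the example above shows this order is attained).

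The one missing idea---which is essentially the entire content of the paper's proof---is to relabel $Y_i$ so that the \emph{reference} action $y_i^1$, i.e.\ the action eliminated by the parametrization $x_i\in\R^{\hat{K}_i-1}$, is itself a best response to $x_{-i}$. Then $b_1=b^*$, every component $\partial \hat{U}(x)/\partial x_i^k=b_{k+1}-b^*$ is nonpositive, and $\|\nabla_{x_i}\hat{U}(x)\|_1=\sum_{k=1}^{\hat{K}_i-1}(b^*-b_{k+1})=\sum_j(b^*-b_j)$ \emph{exactly}, so your intermediate estimate finishes the proof with constant exactly $c$; the step you expected to be hard becomes an identity. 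Your instinct about the asymmetric role of $y_i^1$ was well founded: the relabeling is not an innocuous ``without loss of generality,'' because $\|\nabla_{x_i}\hat{U}\|_1$ is not invariant under permuting the actions and the required ordering depends on $x$ (through which actions are best responses). The inequality with constant exactly $c$ should therefore be read in these $x$-adapted coordinates; your counterexample-insensitive bound with constant $c/(\hat{K}_i-1)$ holds in any fixed coordinates, and for all downstream uses this distinction is immaterial, since Remark \ref{remark_norm_equivalence} immediately trades the $\ell_1$ norm and the constant for an arbitrary norm and a modified constant.
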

\begin{proof}
Let $x\in B$. If $\|\nabla_{x_i}\hat{U}(x)\|_1=0$, then $\nabla_{x_i} \hat{U}(x)=0$, and the inequality is trivially satisfied. Suppose from now on that $\|\nabla_{x_i}\hat{U}(x)\|_1 >0$.

Without loss of generality, assume that $Y_i$ is ordered so that
\begin{equation}\label{IR1_eq4}
\actionione \in \BR_i(x_{-i}).
\end{equation}
Differentiating \eqref{eq_potential_expanded_form2} we find that\footnote{Note that the domain of (expected) potential function $\hat{U}$ may be trivially extended to an open neighborhood around $\hat{X}$ (see Section \ref{sec_prelims}). Using this extension we see that the derivative is well defined for $x$ lying on the boundary of $\hat{X}$.}
\begin{equation}\label{IR1_eq2}
\frac{\partial \hat{U}(x)}{\partial x_i^{k}} = \hat{U}(\actionikone,x_{-i}) - \hat{U}(\actionione,x_{-i}).
\end{equation}
Together with \eqref{IR1_eq4}, this implies that for $k=1,\ldots,\hat{K}_i-1$ we have
\begin{equation} \label{IR1_eq1}
\actionikone \in \BR_i(x_{-i}) \iff \frac{\partial \hat{U}(x)}{\partial x_i^k}=0.
\end{equation}
Using the multlinearity of $\hat{U}$ we see that if $\xi_i\in \BR_i(x_{-i})$ and $\xi_i^k > 0$ then $\actionikone \in \BR_i(x_{-i})$. But, by \eqref{IR1_eq1} this implies that if $\xi_i\in \BR_i(x_{-i})$ and $\xi_i^k > 0$ then $\frac{\partial \hat{U}(x)}{\partial x_i^k} = 0$.
Noting that any $\xi_i\in \BR_i(x_{-i})$ is necessarily coordinatewise nonnegative, this gives
\begin{align} \label{IR1_eq3}
(\xi_i - x_i)\cdot \nabla_{x_i} \hat{U}(x) = \underbrace{\sum_{k=1}^{\hat{K}_i-1} \xi_i^k \frac{\partial \hat{U}(x)}{\partial x_i^{k}}}_{=0} - \sum_{k=1}^{\hat{K}_i-1} x_i^k \frac{\partial \hat{U}(x)}{\partial x_i^{k}}, \quad\quad \xi_i\in \BR_i(x_{-i})
\end{align}
Since we assume $x\in B$, we have $x_i^k \geq d(P_{\hat{X}_i}(B),\partial \hat{X}_i)$, for all $k=1,\ldots,\hat{K}_i-1$.
Since we assume $y_i^1\in \BR_i(x_{-i})$, from \eqref{IR1_eq2} we get that $\frac{\partial \hat{U}(x)}{\partial x_i^{k}}\leq 0$ for all $k=1,\ldots,\hat{K}_i-1$. Substituting into \eqref{IR1_eq3}, this gives
\begin{equation}
(\xi_i - x_i)\cdot \nabla_{x_i} \hat{U}(x) \geq d(P_{\hat{X}_i}(B),\partial \hat{X}_i) \sum_{k=1}^{\hat{K}_i-1} \left(-\frac{\partial \hat{U}(x)}{\partial x_i^{k}}\right), \quad\quad \xi_i \in \BR_i(x_{-i}).
\end{equation}
But since $\frac{\partial \hat{U}(x)}{\partial x_i^{k}}\leq 0$ for all $k$ we have $\sum_{k=1}^{\hat{K}_i-1} \left(-\frac{\partial \hat{U}(x)}{\partial x_i^{k}}\right) = \|\nabla_{x_i}\hat{U}(x)\|_{1}$, and hence
$$
(\xi_i - x_i)\cdot \nabla_{x_i} \hat{U}(x) \geq d(P_{\hat{X}_i}(B),\partial \hat{X}_i) \|\nabla_{x_i}\hat{U}(x)\|_1, \quad\quad \xi_i \in \BR_i(x_{-i}),
$$
which is the desired result.
\end{proof}

\begin{remark} \label{remark_norm_equivalence}
Since the space $X_i$ in Lemma \ref{lemma_IR1} is finite dimensional, given any norm $\|\cdot\|$, there exists a constant $\tilde c>0$ such that
$$
z_i\cdot \nabla_{x_i} U(x)\geq c \|\nabla_{x_i} U(x)\|, \quad\quad ~ \forall ~z_i\in \BR_i(x_{-i})-x_i
$$
with $c = \tilde c d(P_{X_i}(B),\partial X_i)$.
\end{remark}

The following lemma proves \eqref{equation_pot_inequality2_1} for the case in which $x^*$ is a completely mixed equilibrium. Note that in this case the projection $\calP$ is given by the identity, so \eqref{equation_pot_inequality2_1} becomes
\begin{equation} \label{eq_diff_inequal2}
\ddt U(\vx(t)) \geq c_2 d(\vx(t),x^*).
\end{equation}
\begin{lemma} Suppose $x^*$ is a completely mixed equilibrium. Then  \eqref{eq_diff_inequal2} holds for $\vx(t)$ in a neighborhood of $x^*$.
\end{lemma}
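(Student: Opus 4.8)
The claim is the differential inequality (\ref{eq_diff_inequal2}), namely $\ddt U(\vx(t)) \geq c_2 d(\vx(t),x^*)$, valid for $\vx(t)$ near a completely mixed equilibrium $x^*$. Since $x^*$ is completely mixed it lies in the interior of $X$, so I may pick a ball $B \subset \mathring{X}$ around $x^*$ on which $d(P_{X_i}(B),\partial X_i) \geq c > 0$ uniformly for every player $i$; this is the regime where Lemma \ref{lemma_IR1} applies with a strictly positive constant. The plan is to (i) compute $\ddt U(\vx(t))$ along the BR process using the chain rule and the inclusion $\dot{\vx} \in \BR(\vx)-\vx$, (ii) invoke Lemma \ref{lemma_IR1} to lower-bound the resulting inner product by $c\|\nabla U(\vx)\|$, and (iii) use second-order non-degeneracy to convert $\|\nabla U(\vx)\|$ into a lower bound by $d(\vx,x^*)$.

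**The chain rule step.** Writing $\dot{\vx}(t) = z(t)$ for some measurable selection $z(t) \in \BR(\vx(t)) - \vx(t)$, I have
\begin{equation*}
\ddt U(\vx(t)) = \nabla U(\vx(t)) \cdot z(t) = \sum_{i=1}^N z_i(t)\cdot \nabla_{x_i} U(\vx(t)),
\end{equation*}
where $z_i(t) \in \BR_i(x_{-i}(t)) - x_i(t)$ by the product structure of $\BR$. Applying Lemma \ref{lemma_IR1} (in the norm-equivalent form of Remark \ref{remark_norm_equivalence}) to each summand with $\hat\Gamma = \Gamma$ and $B$ the chosen ball, each term is bounded below by $c\|\nabla_{x_i}U(\vx(t))\|$, so summing yields
\begin{equation*}
\ddt U(\vx(t)) \geq c\sum_{i=1}^N \|\nabla_{x_i} U(\vx(t))\| \geq c'\|\nabla U(\vx(t))\|
\end{equation*}
for a suitable constant $c' > 0$ (again by equivalence of norms on finite-dimensional space). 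Note that nonnegativity of each term is what makes the sum legitimately bounded below, and the positivity of $c$ is exactly what the interiority of $x^*$ buys us.

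**Converting the gradient bound to a distance bound.** The remaining task is to show $\|\nabla U(\vx)\| \geq c'' d(\vx,x^*)$ near $x^*$. Since $x^*$ is an interior equilibrium, $\nabla U(x^*) = 0$, so Taylor's theorem gives $\nabla U(\vx) = \tilde{\vH}(x^*)(\vx - x^*) + o(\|\vx-x^*\|)$, where $\tilde{\vH}(x^*)$ here is the full Hessian (the completely mixed case has $\gamma = \kappa$, so the Hessian with respect to the carrier is the full Hessian). Because $\Gamma$ is assumed non-degenerate, second-order non-degeneracy (Section \ref{sec_second_order_degeneracy}) guarantees $\tilde{\vH}(x^*)$ is invertible, hence $\|\tilde{\vH}(x^*)v\| \geq \lambda_{\min}\|v\|$ where $\lambda_{\min} > 0$ is the smallest singular value. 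Shrinking $B$ so the $o(\cdot)$ remainder is dominated by, say, $\tfrac{1}{2}\lambda_{\min}\|\vx-x^*\|$, I obtain $\|\nabla U(\vx)\| \geq \tfrac{1}{2}\lambda_{\min}\, d(\vx,x^*)$. Combining with the previous display and setting $c_2 := \tfrac{1}{2}c'\lambda_{\min}$ finishes the proof.

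**Main obstacle.** The chain-rule step is the delicate one: $\vx(t)$ is only absolutely continuous and $z(t)$ is a measurable selection, so the identity $\ddt U(\vx(t)) = \nabla U(\vx(t))\cdot z(t)$ must be justified a.e. rather than pointwise everywhere, and the whole inequality is really meant in the integrated sense flagged in the paper's footnote. The substantive inequality content, however, is entirely captured by Lemma \ref{lemma_IR1} plus the Hessian bound; the regularity bookkeeping is routine given that $U$ is smooth and $\vx$ is absolutely continuous, so I expect no genuine difficulty there beyond care with the a.e. qualifier.
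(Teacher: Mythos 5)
Your proof is correct and follows essentially the same route as the paper's: chain rule along the BR process, Lemma \ref{lemma_IR1} (with Remark \ref{remark_norm_equivalence}) to get $\ddt U(\vx(t)) \geq c'\|\nabla U(\vx(t))\|$, then second-order non-degeneracy to pass from the gradient to the distance $d(\vx(t),x^*)$. The only difference is cosmetic: where the paper cites Lemma \ref{lemma_gradient_inequality_apx} for the bound $\|\nabla U(x)\| \geq c\, d(x,x^*)$, you prove it inline by Taylor-expanding $\nabla U$ about $x^*$ and using invertibility of the Hessian, which is an equally valid (indeed more direct) justification of the same sub-lemma, and your remark that the identity $\ddt U(\vx(t)) = \nabla U(\vx(t))\cdot \dot\vx(t)$ holds only a.e.\ matches the paper's own integrated-sense caveat.
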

\begin{proof}
Note that
$$
\ddt U(\vx(t)) = \nabla U(\vx(t))\cdot \dot \vx(t)\\
= \sum_{i=1}^N \nabla_i U(\vx(t)) \cdot z_i
$$
for some $z_i\in \BR_i(\vx_{-i}) - \vx_i$.
By Lemma \ref{lemma_IR1} this gives
$$
\ddt U(\vx(t)) \geq \sum_{i=1}^N c \|\nabla_{x_i} U(\vx)\|.
$$
By the equivalence of finite-dimensional norms, there exists a constant $c_1$ such that $\ddt U(\vx(t))\geq c_1\|\nabla U(\vx(t))\|$ for $\vx(t)$ in a neighborhood of $x^*$. Since $\Gamma$ is assumed to be regular (and hence second-order non-degenerate), $x^*$ is a non-degenerate critical point of $U$. By Lemma \ref{lemma_gradient_inequality_apx} (see appendix) there exists a constant $c_2$ such that $c_1\|\nabla U(x)\| \geq c_2 d(x,x^*)$ for $x$ in a neighborhood of $x^*$, and hence $\ddt U(\vx(t)) \geq c_2 d(\vx(t),x^*)$.
\end{proof}

\subsection{Proving the Differential Inequality: The Incompletely Mixed Case} \label{sec_incompletely_mixed_case}
In this section we prove \eqref{equation_pot_inequality2_1} for the case in which $x^*$ is incompletely mixed. The main idea of the proof is the same as the proof in the completely mixed case. However, care must be taken to ensure that $\vx(t)$ approaches the boundary of $X$ in an appropriate manner. Handling this case is the principal role of the first-order non-degeneracy condition.

For each $x=(x_p,x_m)\in X$ near to $x^*$, the following lemma allows us to define an additional lower dimensional game $\Gamma_{x_p}$ associated with $x_p$ in which the best-response set is closely related to the best-response set for the original game $\Gamma$.
The lemma is a straightforward consequence of the definition of the best response correspondence and the continuity of $\nabla U$.
\begin{lemma} \label{lemma_IR4}
For $x$ in a neighborhood of $x^*$, the best response set satisfies
$$
\BR_i(x_{-i}) \subseteq \BR_i(x^*_{-i}),\quad \quad \forall ~i=1,\ldots,\tilde{N}.
$$
\end{lemma}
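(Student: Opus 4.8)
The plan is to recognize this inclusion as nothing more than upper semicontinuity of the best-response correspondence, phrased for pure actions. First I would reduce to pure strategies. Since $\BR_i(x_{-i})$ is, by definition, the set of mixed strategies $x_i\in X_i$ whose support is contained in the set of pure best responses to $x_{-i}$, it is exactly the face of $X_i$ spanned by those pure actions. Consequently the set inclusion $\BR_i(x_{-i}) \subseteq \BR_i(x^*_{-i})$ is equivalent to the inclusion of the underlying pure best-response sets, so it suffices to show that every pure action that is \emph{not} a best response against $x^*_{-i}$ fails to be a best response against $x_{-i}$ whenever $x$ is close enough to $x^*$.

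To this end, fix $i\in\{1,\ldots,\tilde{N}\}$ and for each pure action $y_i^j\in Y_i$ define the best-response gap
\[
\delta_i^j(x_{-i}) := \max_{\ell} U(y_i^\ell,x_{-i}) - U(y_i^j,x_{-i}) \geq 0 .
\]
By construction $y_i^j \in \BR_i(x_{-i})$ precisely when $\delta_i^j(x_{-i}) = 0$; in particular $\delta_i^j(x^*_{-i}) > 0$ for every $j$ with $y_i^j\notin\BR_i(x^*_{-i})$. Each map $x_{-i}\mapsto U(y_i^\ell,x_{-i})$ is multilinear, hence continuous; equivalently, the pairwise payoff differences appearing here are the quantities $\partial U(x)/\partial x_i^k = U(y_i^{k+1},x_{-i}) - U(y_i^1,x_{-i})$ obtained by differentiating \eqref{eq_potential_expanded_form2}, which are continuous because $\nabla U$ is. Therefore $\delta_i^j$ is continuous, being a maximum of finitely many continuous functions minus a continuous function, and the strict inequality $\delta_i^j(x^*_{-i}) > 0$ persists on a neighborhood of $x^*$; on that neighborhood $y_i^j \notin \BR_i(x_{-i})$.

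Finally I would intersect these neighborhoods over the finitely many pairs $(i,j)$ with $i\leq\tilde{N}$ and $y_i^j\notin\BR_i(x^*_{-i})$, obtaining a single neighborhood of $x^*$ on which no such action is a best response; this gives the claimed inclusion for all $i\leq\tilde{N}$ simultaneously. There is no genuine obstacle here: the content is entirely the persistence of strict inequalities under continuity (upper semicontinuity of the $\arg\max$), and the argument requires neither first- nor second-order non-degeneracy of $x^*$ — regularity will instead be invoked in the subsequent steps that exploit this lemma to control how $\vx(t)$ approaches the boundary.
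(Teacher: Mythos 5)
Your proof is correct and follows exactly the route the paper intends: the paper states the lemma without a written proof, remarking only that it is ``a straightforward consequence of the definition of the best response correspondence and the continuity of $\nabla U$,'' and your argument---reducing the mixed inclusion $\BR_i(x_{-i})\subseteq\BR_i(x^*_{-i})$ to inclusion of pure best-response sets via multilinearity, then letting the strictly positive best-response gaps persist on a neighborhood by continuity, and intersecting the finitely many resulting neighborhoods---is precisely that argument made explicit. Your closing observation that no first- or second-order non-degeneracy is needed here is also accurate; regularity enters only in the later lemmas that build on this one.
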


Given any $x = (x_p,x_m) \in X$ we define $\tilde U_{x_p}:\tilde X\rightarrow \R$ and $\BRtilde_{x_p,i}:\tilde X_{-i}\rightrightarrows \tilde X_i$ as follows. For $\tilde x \in \tilde X$ let
\begin{equation} \label{eq_V_tilde}
\tilde U_{x_p}(\tilde x) :=  U(x_p, \tilde x),
\end{equation}
and for $\tilde x_{-i} \in \tilde X_{-i}$ let
\begin{equation} \label{eq_BR_tilde}
\BRtilde_{x_p,i}(\tilde x_{-i}) := \arg\max_{\tilde{x}_i \in \tilde X_i} \tilde U_{x_p}(\tilde{x}_i,\tilde x_{-i})
\end{equation}

Let $\Gamma_{x_p}$ be the potential game with player set $\{1,\ldots,\tilde{N}\}$, mixed strategy space $\tilde X$ and potential function $\tilde U_{x_p}$. Note that since $U$ is continuous and $\tilde X$ is compact, $\tilde U_{x_p}$ converges uniformly to $\tilde U_{x_p^*} =: \tilde{U}$ as $x_p \rightarrow x_p^*$. In this sense the game $\Gamma_{x_p}$ can be seen as converging to $\tilde{\Gamma}$ as $x_p \rightarrow x_p^*$.
\begin{remark}\label{remark_g}
The function $g$ defined in Section \ref{sec_diff_ineq_prelims} admits the following interpretation.
Suppose we fix some $x_p = (x_i^k)_{i=1,\ldots,N,~k=\gamma_i,\ldots,K_i-1}$.
Then $g(x_p)$ is a completely mixed Nash equilibrium of $\Gamma_{x_p}$. Moreover, if we let $x_p\rightarrow x_p^*$, then the corresponding equilibrium of the reduced game $\Gamma_{x_p}$ converges to $x^*$, i.e., $(x_p,g(x_p)) \rightarrow (x_p^*,g(x_p^*)) = x^*$, precisely along $\Gr(g)$. (See Example \ref{example2} for an illustration.)
\end{remark}
\begin{remark} \label{remark_first_order_degeneracy}
Suppose $x^*$ is a first-order non-degenerate equilibrium. Using the multilinearity of $U$ we see that for any $x\in X$ we have $\carr_i(x_i) \subseteq \BR_i(x_{-i})$. By Remark \ref{remark_first_order_equivalent_condition}, at $x^*$ we have $\carr_i(x_i^*) = \BR_i(x_{-i}^*)$.
Due to the ordering we assumed on $Y_i$, this implies that
$y_i^k \in \BR_i(x_{-i}^*) \iff 1\leq k\leq \gamma_i$. Moving to the $X$ domain, this means that if $\hat{x}_i \in \BR_i(x_{-i}^*)$, then $\hat{x}_i^k = 0$ for all $k=\gamma_i,\ldots,K_i-1$. By Lemma \ref{lemma_IR4}, this implies that for all $x$ in a neighborhood of $x^*$ and for $\hat{x}_i \in \BR_i(x_{-i})$ we have $(\hat{x}_i^k)_{k=\gamma_i}^{K_i-1} = 0$.
\end{remark}

The following lemma extends the result of Lemma \ref{lemma_IR1} so it applies in a useful way to the potential function $\tilde U$ under the projection $\calP$.
\begin{lemma} \label{lemma_IR5}
There exists a constant $c>0$ such that for all $x=(x_p,x_m)$ in a neighborhood of $x^*$ and all $\eta\in\R^{\gamma_i-1}$ with $\|\eta\|$ sufficiently small we have
$$
(z_i + \eta) \cdot \nabla_{x_i} \tilde{U}(\calP(x)) \geq c\|\nabla_{x_i} \tilde{U}(\calP( x))\|,
$$
for all $z_i \in \BRtilde_{i,x_p}([x_m]_{-i}) - [x_m]_i$, where $[x_m]_i := (x_i^k)_{k=1,\ldots,\gamma_i-1}$ refers to the player-$i$ component of $x_m$ and $[x_m]_{-i}$ contains the components of $x_m$ corresponding to the remaining players.
\end{lemma}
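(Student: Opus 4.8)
The plan is to reduce the statement to Lemma \ref{lemma_IR1} applied in the reduced game $\Gamma_{x_p}$, and then transport the resulting inequality from the reduced potential $\tilde U_{x_p}$ evaluated at $[x_m]$ over to the limiting potential $\tilde U$ evaluated at the projected point $\calP(x)$. First I would dispose of the perturbation $\eta$: since $\tilde X_i$ is finite-dimensional, it suffices to prove the sharper unperturbed bound $z_i\cdot\nabla_{x_i}\tilde U(\calP(x))\geq 2c\,\|\nabla_{x_i}\tilde U(\calP(x))\|$ with a uniform constant, because by Cauchy--Schwarz and equivalence of norms any $\eta$ with $\|\eta\|$ below a fixed threshold changes the left-hand side by at most $c\,\|\nabla_{x_i}\tilde U(\calP(x))\|$. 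So from now on take $\eta=0$.

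For the reduction itself, the first ingredient is a uniform interior bound. Since $\calP(x^*)$ is a completely mixed (interior) equilibrium of $\tilde\Gamma$ and $\calP,g$ are continuous, for $x$ in a small enough neighborhood of $x^*$ both $\calP(x)$ and $[x_m]$ lie in a fixed compact subset of the interior of $\tilde X$; this gives a uniform lower bound on the distance to $\partial\tilde X$, hence a uniform constant in Lemma \ref{lemma_IR1} and in its general-norm form (Remark \ref{remark_norm_equivalence}). First-order non-degeneracy enters through Remark \ref{remark_first_order_degeneracy} and Lemma \ref{lemma_IR4}: best responses near $x^*$ place no weight on non-carrier actions, so $\BRtilde_{x_p,i}$ genuinely captures the relevant directions and the projection discards no mass. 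Applying Remark \ref{remark_norm_equivalence} to $\Gamma_{x_p}$ with potential $\tilde U_{x_p}$ at the point $[x_m]$ then yields a uniform $c_0>0$ with $z_i\cdot\nabla_{x_i}\tilde U_{x_p}([x_m])\geq c_0\,\|\nabla_{x_i}\tilde U_{x_p}([x_m])\|$ for every $z_i\in\BRtilde_{x_p,i}([x_m]_{-i})-[x_m]_i$.

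The crux is the transfer from $\nabla_{x_i}\tilde U_{x_p}([x_m])$ to $\nabla_{x_i}\tilde U(\calP(x))$. The key structural fact is that $\calP$ is built so that $\calP(x)-\calP(x^*)=x_m-g(x_p)$, while $g(x_p)$ is precisely the point where the reduced mixed-gradient vanishes, $\nabla_{x_m}\tilde U_{x_p}(g(x_p))=0$ (Remark \ref{remark_g}), and $\calP(x^*)=x_m^*$ is where $\nabla_{x_m}\tilde U$ vanishes. Consequently both gradients arise by expanding the multilinear mixed-gradient map about its respective equilibrium, applied to the \emph{same} displacement $v:=x_m-g(x_p)$, so they differ only through the $x_p$-dependence of the mixed Hessian; this discrepancy is $O(\|x_p-x_p^*\|)\,\|v\|$ and vanishes identically when the mixed Hessian is independent of $x_p$ (e.g.\ in two-player games, as in Example \ref{example2}). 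Writing $G=\nabla_{x_i}\tilde U(\calP(x))$ and $G'=\nabla_{x_i}\tilde U_{x_p}([x_m])$, the previous step gives $z_i\cdot G\geq c_0\|G'\|-\diam(\tilde X_i)\,\|G-G'\|$, and the bound closes with $2c=c_0/2$ once $\|G-G'\|$ is controlled relative to $\|G\|$; here second-order non-degeneracy supplies that the limiting mixed Hessian $\tilde{\vH}(\calP(x^*))$ is invertible, so that the mixed-gradient genuinely detects the displacement $v$.

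The main obstacle is exactly this last comparison: making $\|G-G'\|$ small \emph{relative} to $\|\nabla_{x_i}\tilde U(\calP(x))\|$, which may itself be small near the equilibrium, and doing so uniformly in $i$. The delicate point is that the per-player gradient $\nabla_{x_i}\tilde U(\calP(x))$ can be small even when $v$ is not, so the relative error estimate cannot rely on invertibility of a single Hessian block; it must exploit the centering property of $\calP$ through $g$ (which forces $G$ and $G'$ to act on the identical displacement $v$) together with shrinking the neighborhood so that the $O(\|x_p-x_p^*\|)$ Hessian mismatch is dominated. This is where first- and second-order non-degeneracy do their combined work, and it is the only step I expect to require genuine care.
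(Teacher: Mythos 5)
The paper itself omits the proof of Lemma \ref{lemma_IR5} (it is declared ``relatively straightforward'' and skipped), so there is no official argument to compare against; judged on its own terms, your proposal sets up a plausible skeleton but does not close. Your disposal of $\eta$ is fine, the uniform-interiority application of Lemma \ref{lemma_IR1} and Remark \ref{remark_norm_equivalence} in the game $\Gamma_{x_p}$ is fine, and your centering identity is correct: $\calP(x)-\calP(x^*)=x_m-g(x_p)=:v$ with $F(x_p,g(x_p),u)=0$, so $G'=\nabla_{x_i}\tilde U_{x_p}([x_m])$ and $G=\nabla_{x_i}\tilde U(\calP(x))$ are the same smooth map of $v$ evaluated at parameters $x_p$ versus $x_p^*$, whence $\|G-G'\|\leq L\,\|x_p-x_p^*\|\,\|v\|$ near $x^*$.

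The gap is exactly the step you flag and then leave open: controlling $\|G-G'\|$ \emph{relative to} $\|G\|$ per player. This is not merely delicate --- as a pointwise per-player estimate it fails, and your chain $z_i\cdot G\geq c_0\|G'\|-\diam(\tilde X_i)\,\|G-G'\|$ cannot close from it. Concretely, at a point $x$ near $x^*$ where $G'=0$ exactly (player $i$ indifferent in $\Gamma_{x_p}$, which occurs on a positive-codimension set passing arbitrarily close to $x^*$), the set $\BRtilde_{x_p,i}([x_m]_{-i})$ is the \emph{entire} simplex $\tilde X_i$, while $G$ need not vanish (the zero sets of the two gradient fields coincide only when $x_p=x_p^*$); since $[x_m]_i$ is interior, one can then pick $\xi_i\in\tilde X_i$ with $z_i\cdot G<0$, and your lower bound degenerates to a negative quantity versus $c\|G\|>0$. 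The structural reason is the one you half-identify: the error $\|G-G'\|$ is of order $\|x_p-x_p^*\|\,\|v\|$, i.e.\ controlled by the \emph{full} displacement $v$, whereas the player-$i$ gradient block can vanish independently of $v$ (a block row of $\tilde{\vH}(x^*)$ has nontrivial kernel in general), so no invertibility hypothesis rescues a single block. What your strategy \emph{does} prove --- and all that the application in Lemma \ref{lemma_IR3} actually uses, since the per-player inequalities are immediately summed there --- is the aggregated inequality $\sum_i(z_i+\eta_i)\cdot\nabla_{x_i}\tilde U(\calP(x))\geq c\,\|\nabla\tilde U(\calP(x))\|$: second-order non-degeneracy makes $D_{x_m}F$ uniformly invertible near $x^*$ (Lemma \ref{lemma_gradient_inequality_apx}), giving two-sided bounds $c'\|v\|\leq\|F(x_p,x_m,u)\|,\ \|F(x_p^*,x_m^*+v,u)\|\leq C'\|v\|$, so the total error $O(\|x_p-x_p^*\|\,\|v\|)$ is dominated by $c_0\sum_i\|G_i'\|\geq c_0 c'\|v\|$ after shrinking the neighborhood. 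Either prove that summed form and route Lemma \ref{lemma_IR3} through it, or supply a genuinely per-player mechanism --- e.g.\ a transfer of the sign/support structure exploited in the proof of Lemma \ref{lemma_IR1} from $\tilde U_{x_p}$ at $[x_m]$ to $\tilde U$ at $\calP(x)$ --- which your proposal does not contain.
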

The proof of this lemma is relatively straightforward and omitted for brevity.

Finally, the following lemma shows that the differential inequality \eqref{equation_pot_inequality2_1} holds.
\begin{lemma}\label{lemma_IR3}
Let $\Gamma$ be a non-degenerate potential game with mixed equilibrium $x^*$, and let $(\vx(t))_{t\geq 0}$ be a BR process. Then the inequality \eqref{equation_pot_inequality2_1} holds for $\vx(t)$ in a neighborhood of $x^*$.
\end{lemma}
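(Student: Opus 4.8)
The plan is to differentiate $\tilde{U}(\calP(\vx(t)))$ by the chain rule, reduce the resulting expression to a per-player sum that Lemma \ref{lemma_IR5} can bound from below, and then convert that gradient bound into a distance bound exactly as in the completely mixed case. Recalling that $\calP$ returns the free mixed components of $\tilde\calP$ and that $[\tilde\calP(x)]_p \equiv x_p^*$, we may write $\calP(\vx(t)) = x_m^* + x_m(t) - g(x_p(t))$, so that the chain rule gives, for each mixing player $i\in\{1,\ldots,\tilde N\}$,
\[
\frac{d}{dt}[\calP(\vx(t))]_i = [\dot x_m(t)]_i - [Dg(x_p(t))\,\dot x_p(t)]_i,
\]
and hence
\[
\frac{d}{dt}\tilde{U}(\calP(\vx(t))) = \sum_{i=1}^{\tilde N}\Big([\dot x_m]_i - [Dg(x_p)\dot x_p]_i\Big)\cdot \nabla_{x_i}\tilde{U}(\calP(\vx)).
\]
Setting $z_i := [\dot x_m]_i$ and $\eta_i := -[Dg(x_p)\dot x_p]_i$, it suffices to show (a) that $z_i$ is a legitimate reduced-game best-response displacement, i.e.\ $z_i \in \BRtilde_{x_p,i}([x_m]_{-i}) - [x_m]_i$, and (b) that $\|\eta_i\|$ can be made arbitrarily small by shrinking the neighborhood of $x^*$; then Lemma \ref{lemma_IR5} yields the per-player lower bound $c\|\nabla_{x_i}\tilde{U}(\calP(\vx))\|$.

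For (a), since $\dot\vx_i = \hat x_i - \vx_i$ for some $\hat x_i\in\BR_i(\vx_{-i})$, the free part $z_i$ equals $\hat x_i^{\,\mathrm{free}} - [x_m]_i$. First-order non-degeneracy is the crucial ingredient: by Remark \ref{remark_first_order_degeneracy} (via Lemma \ref{lemma_IR4}), for $x$ near $x^*$ every $\hat x_i\in\BR_i(\vx_{-i})$ places zero weight on the noncarrier components $k\geq\gamma_i$, so $\hat x_i$ is a mixture over the carrier $C_i$. By the multilinearity of $U$, the payoff $U(y_i^k,\vx_{-i})$ to each carrier action is independent of player $i$'s own components, and the other players' strategies entering $U(\cdot,\vx_{-i})$ are precisely those defining $\tilde{U}_{x_p}$; consequently the maximizers over $C_i$ coincide in the full game and in $\Gamma_{x_p}$, giving $\hat x_i^{\,\mathrm{free}}\in\BRtilde_{x_p,i}([x_m]_{-i})$, hence $z_i\in\BRtilde_{x_p,i}([x_m]_{-i}) - [x_m]_i$.

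For (b), write $\dot x_p = \hat x_p - x_p$, where $\hat x_p$ collects the pure (noncarrier) components of a best response $\hat x\in\BR(\vx)$. By the same first-order non-degeneracy argument --- Remark \ref{remark_first_order_degeneracy} for the mixing players, and upper semicontinuity of $\BR$ together with $\BR_j(x_{-j}^*)=\{y_j^1\}$ for the pure players $j>\tilde N$ --- these pure components vanish (or become arbitrarily small) near $x^*$, so $\dot x_p\to 0$ as $x\to x^*$. Since $g$ is $C^1$ and $Dg$ is bounded near $x_p^*$, we obtain $\|\eta_i\|\leq \|Dg(x_p)\|\,\|\dot x_p\|\to 0$, which is small enough to invoke Lemma \ref{lemma_IR5}. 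Summing the per-player bounds, using the equivalence of finite-dimensional norms to pass from $\sum_i\|\nabla_{x_i}\tilde{U}\|$ to $\|\nabla\tilde{U}\|$, and finally using that $\calP(x^*)$ is a non-degenerate critical point of $\tilde{U}$ (the Hessian of $\tilde{U}$ is invertible at the completely mixed equilibrium $\calP(x^*)$) together with Lemma \ref{lemma_gradient_inequality_apx}, we conclude $\frac{d}{dt}\tilde{U}(\calP(\vx))\geq c_2\,d(\calP(\vx),\calP(x^*))$, which is \eqref{equation_pot_inequality2_1}.

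The main obstacle is step (b): controlling the correction term $\eta_i$ that arises from the nonlinearity of the projection $\calP$ (equivalently, from differentiating $g$ along the trajectory). This term has no analogue in the completely mixed case, where $\calP$ is the identity and $g$ is absent. Making it small relies crucially on first-order non-degeneracy, which keeps the best-response set on the carrier face $\Omega$ near $x^*$ and thereby forces the pure-component velocities $\dot x_p$ to vanish; without it, $\vx(t)$ could approach $\partial X$ transversally, $\dot x_p$ would not be small, and the reduced-game interpretation of $z_i$ would collapse. A secondary point requiring care is justifying that these pointwise manipulations are valid in the integrated (almost-everywhere) sense appropriate to absolutely continuous solutions, as flagged in the footnote to \eqref{equation_pot_inequality2_1}.
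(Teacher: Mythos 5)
Your proposal is correct and follows essentially the same route as the paper's proof: your identity $\calP(\vx(t)) = x_m^* + x_m(t) - g(x_p(t))$ with correction term $\eta_i = -[Dg(x_p)\,\dot x_p]_i$ is exactly the paper's chain-rule computation with the Jacobian of $\tilde\calP$, and your steps (a) and (b) are the paper's consequences (i) and (ii), established there via Lemma \ref{lemma_IR4}, Remark \ref{remark_first_order_degeneracy} (giving $\dot x_i^k = -x_i^k \to 0$ for $k \geq \gamma_i$), and Lemma \ref{lemma_g_derivative_finite}, before concluding with Lemma \ref{lemma_IR5}, norm equivalence, and Lemma \ref{lemma_gradient_inequality_apx}. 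Your filled-in justification of (a)---that the carrier-action payoffs $U(y_i^k, \vx_{-i})$ are independent of player $i$'s own components, so the full-game and reduced-game maximizers over $C_i$ coincide---is a valid elaboration of what the paper asserts "follows readily."
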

\begin{proof}
Let
$$
\vP(x) := \left( \frac{\partial \tilde \calP_i^k}{\partial x_j^\ell } \right)_{\substack{i=1,\ldots,\tilde{N}, k=1,\ldots,\gamma_i-1\\ j=1,\ldots,N,~ \ell=\gamma_i,\ldots,K_i-1}}
$$
where the partial derivatives are evaluated at $x$.
The Jacobian of $\tilde \calP$ evaluated at $x$ is given by
$$
\left( \frac{\partial \tilde \calP_i^k}{\partial x_j^\ell}\right)_{\substack{i,j=1,\ldots,N,\\ k,\ell=1,\ldots,K_i-1}} =
\begin{pmatrix}
I & ~ \vP(x)\\
0 & ~ 0
\end{pmatrix}.
$$
Using the chain rule we may express the time derivative of the potential along the path $\tilde \calP(\vx(t))$ as
$$
\frac{d}{dt} U(\tilde \calP(\vx(t))) = \nabla U(\tilde \calP(\vx(t)))
\begin{pmatrix}
I & ~ \vP(x)\\
0 & ~ 0
\end{pmatrix} \dot{x}
= \nabla_{x_m} U(\tilde \calP(\vx(t)))(\vI ~~\vP(x))\dot{x}.
$$
For $i=1,\ldots,\tilde{N}$, $k=1,\ldots,\gamma_i-1$ let $\eta_i^k(t) := \sum_{j=1}^{N}\sum_{\ell=\gamma_j}^{K_i-1} \frac{\partial \tilde \calP_i^k}{\partial x_j^\ell}\dot x_i^\ell$, let $\eta_i(t) := (\eta_i^k(t))_{k=1}^{\gamma_i-1}$, and let $\eta(t) = (\eta_i(t))_{i=1}^{\tilde{N}}$.
Multiplying out the right two terms above we get
\begin{equation} \label{IR3_eq1}
\frac{d}{dt}U(\tilde \calP(\vx(t))) = \nabla_{x_m} U(\tilde \calP(\vx(t))) \, (\dot{x}_m + \eta(t))
\end{equation}
By Lemma \ref{lemma_IR4} and Remark \ref{remark_first_order_degeneracy}, if we restrict $\vx(t)$ to a sufficiently small neighborhood of $x^*$ then for any $z_i = (z_i',z_i'') \in \BR_i(\vx_{-i}(t))$, $z_i'=(z_i^k)_{k=1}^{\gamma_i-1}$, $z_i''=(z_i^k)_{k=\gamma_i}^{K_i-1}$, we have $z_i' \in \BRtilde_{x_p,i}([x_m]_{-i})$ and $z_i'' = 0$. We note two important consequences of this:\\
(i) If we restrict $\vx(t)$ to a sufficiently small neighborhood of $x^*$ and note that \newline $U(\tilde \calP(\vx(t))) = \tilde{U}(\calP(\vx(t))$, then by \eqref{IR3_eq1} we have
\begin{align} \label{IR3_eq2}
\frac{d}{dt} \tilde{U}(\calP(\vx(t))) & =
\nabla \tilde{U}(\calP(\vx(t))) \cdot
\begin{pmatrix}
z_1(t) + \eta_1(t)\\
\vdots\\
z_{\tilde{N}}(t) + \eta_{\tilde{N}}(t)
\end{pmatrix}\\
\nonumber & = \sum_{i=1}^{\tilde{N}}\nabla_{x_i} \tilde{U}(\calP(\vx(t))) \cdot (z_i(t) + \eta_i(t)),
\end{align}
where $z_i(t) \in \BRtilde_{x_p(t),i}([x_m(t)]_{-i}) - [x_m(t)]_i$.\\
(ii) We may force $\max_{i=1,\ldots,\tilde{N}} \|\eta_i\|$ to be arbitrarily small by restricting $\vx(t)$ to a neighborhood of $x^*$.

Consequence (i) follows readily by using the definition of the BR dynamics \eqref{def_FP_autonomous}. To show consequence (ii), note that by \eqref{def_FP_autonomous} we have $\dot{\vx}_i^k = z_i^k - x_i^k$ for all $i=1,\ldots,N$, $k=1,\ldots,K_i$, for some $z_i \in \BR_i(x_{-i})$. But, for $x$ in a neighborhood of $x^*$ and $k\geq \gamma_i$, we have shown above that $z_i^k = 0$, and hence $\dot{x}_i^k = -x_i^k$.\footnote{We note that this particular step depends crucially on the assumption of first-order non-degeneracy (see Remark \ref{remark_first_order_degeneracy}).} Due the ordering we assumed for $Y_i$, we have $[x^*]_i^k = 0$ for any $(i,k)$ such that $k\geq \gamma_i$. Hence, $x_i^k \rightarrow 0$ as $x\rightarrow x^*$, for any $(i,k)$ such that $k\geq \gamma_i$.

Furthermore, there exists a $c>0$ such that $|\frac{\partial \tilde \calP_i^k(x)}{\partial x_j^\ell}| < c$, $i=1,\ldots,\tilde{N}$, $k=1,\ldots,\gamma_i-1$, $j=1,\ldots,N$, $\ell\geq \gamma_j$ uniformly for $x$ in a neighborhood of $x^*$ (see Lemma \ref{lemma_g_derivative_finite} in appendix).
By the definition of $\eta_i$, this implies that $\max_{i=1,\ldots,\tilde{N}} \|\eta_i\|$ may be made arbitrarily small by restricting $\vx(t)$ to a sufficiently small neighborhood of $x^*$.

Now, let $\vx(t)$ be restricted to a sufficiently small neighborhood of $x^*$ so that $\|\eta_i(t)\|$ is small enough to apply Lemma \ref{lemma_IR5} for each $i$. Applying Lemma \ref{lemma_IR5} to \eqref{IR3_eq2} we get
$
\frac{d}{dt} \tilde{U}(\calP(\vx(t))) \geq \sum_{i=1}^{\tilde{N}} c \|\nabla_{x_i} \tilde{U}(\calP(\vx(t)))\|
$
for $\vx(t)$ in a neighborhood of $x^*$. By the equivalence of finite-dimensional norms, there exists a constant $c_1$ such that $\frac{d}{dt} \tilde{U}(\calP(\vx(t))) \geq  c_1\|\nabla \tilde{U}(\calP(\vx(t)))\|$
for $\vx(t)$ in a neighborhood of $x^*$.

Since $\Gamma$ is assumed to be (second-order) non-degenerate, $\calP(x^*)$ is a non-degenerate critical point of $\tilde U$. By Lemma \ref{lemma_gradient_inequality_apx} (see appendix) there exists a constant $c_2$ such that
$c_1\|\nabla \tilde{U}(\tilde x)\| \geq c_2d(\tilde x,\calP(x^*))$ for all $\tilde x\in \tilde X$ in a neighborhood of $\calP(x^*)$.
Since $\calP$ is continuous we have
$
\frac{d}{dt} \tilde{U}(\calP(\vx(t))) \geq c_2 d(\calP(\vx(t)),\calP(x^*)),
$
for $\vx(t)$ in a neighborhood of $x^*$.
\end{proof}

\section{Proof of Main Result} \label{sec_proof_main_result}
We will assume throughout this section that $\Gamma$ is a regular potential game. By Theorem 1 of \cite{swenson2017regular}, the ensuing results hold for almost all potential games.

For each mixed equilibrium $x^*$, let the set $\Lambda(x^*)\subset X$ be defined as
$$
\Lambda(x^*) :=
\begin{cases}
\{x^*\} & \mbox{ if $x^*$ is completely mixed},\\
\Gr(g) & \mbox{ otherwise},
\end{cases}
$$
where $g$ is defined with respect to $x^*$ as in Section \ref{sec_diff_ineq_prelims}.

In this section we will prove Theorem \ref{thrm_main_result} in two steps. First, we will show that for each mixed equilibrium $x^*$, the set $\Lambda(x^*)$ can only be reached in finite time from an $\calL^\kappa$-null set of initial conditions (see Proposition \ref{prop_finite_time_convergence}), where $\kappa$, defined in \eqref{def_kappa}, is the dimension of $X$.
Second, we will show that if a BR process converges to the set $\Lambda(x^*)$,
then it must do so in finite time (see Proposition \ref{prop_infinite_time_convergence}). Since $x^*\in \Lambda(x^*)$, Propositions \ref{prop_finite_time_convergence} and \ref{prop_infinite_time_convergence} together show that for any mixed equilibrium $x^*$, the set of initial conditions from which BR dynamics converge to $x^*$ has $\calL^\kappa$-measure zero.

By Theorem 2 of \cite{swenson2017regular} we see that in regular potential games, the set of NE is finite. Hence, Propositions \ref{prop_finite_time_convergence} and \ref{prop_infinite_time_convergence} imply that BR dynamics can only converge to \emph{set} of mixed strategy equilibria from a $\calL^\kappa$-null set of initial conditions.
Since a BR process must converge to the set of NE in a potential game (\cite{benaim2005stochastic}, Theorem 5.5), this implies that Theorem \ref{thrm_main_result} holds.

\subsection{Finite-Time Convergence} \label{sec_finite_time_convergence}
The goal of this subsection is to prove the following proposition.
\begin{proposition} \label{prop_finite_time_convergence}
Let $\Gamma$ be a non-degenerate game and let $x^*$ be a mixed-strategy NE of $\Gamma$. The set $\Lambda(x^*)$ can only be reached by a BR process in finite time from a set of initial conditions with $\calL^\kappa$-measure zero. That is,
\begin{align*}
\calL^\kappa(\{x_0\in X: \vx(0) = x_0,~ & \vx(t) \mbox{ is a BR process},\\
& \vx(t)\in \Lambda(x^*) \mbox{ for some } t\in[0,\infty)\})=0.
\end{align*}
\end{proposition}

We will take the following approach in proving the proposition. First, we will establish that solutions of \eqref{def_FP_autonomous} are unique (over a finite-time horizon) almost everywhere in $X$ (see Lemma \ref{lemma_unique_flow}). We will then show that---in an appropriate measure-theoretic sense---the BR-dynamics vector field has bounded divergence (see Lemma \ref{lemma_FP_divergence}). The practical implication of this result will be that BR dynamics cannot compress a set of positive measure into a set of zero measure in finite time. Since $\Lambda(x^*)$ is a low-dimensional set (see below), we will see that this implies that the set from which $\Lambda(x^*)$ can be reached in finite time cannot have positive measure, which will prove the proposition.

Before proving the proposition we present some definitions and preliminary results.
Let
\begin{equation} \label{def_indiff_surface}
\calI_{i,k,\ell}:=\{(x_i,x_{-i})\in X:~ U(y_i^k,x_{-i}) = U(y_i^{\ell},x_{-i})\},
\end{equation}
for $i=1,\ldots,N$, $k,\ell =1,\ldots,K_i$, $\ell\not= k$, be the set in which player $i$ is indifferent between his $k$-th and $\ell$-th actions.


If the game $\Gamma$ is non-degenerate, then each $\calI_{i,k,\ell}$ is the union of smooth surfaces with Hausdorff dimension at most $(k-1)$ (see Lemma \ref{lemma_thin_indifference_surface} in appendix).
In particular, for each $x\in \calI_{i,k,\ell}$ there exists a vector $\nu\in\R^{\kappa}$ that is normal to $\calI_{i,k,\ell}$ at $x$. We refer to the set $\calI_{i,k,\ell}$ as an \emph{indifference surface} of player $i$.

We define the set $\tilde Q\subseteq X$ as follows. Let $\tilde Q$ contain the set of points where two or more indifference surfaces intersect and their normal vectors do not coincide. Furthermore, if an indifference surface $\calI$ has a component $\hat{\calI}\subseteq \calI$ with Hausdorff dimension less than $\kappa-1$, then we put any points where $\hat{\calI}$ intersects with another decision surface into $\tilde{Q}$. Since each indifference surface is smooth with dimension at most $\kappa-1$, $\tilde Q$ has Hausdorff dimension at most $\kappa-2$. Let
$$
Q:= \tilde Q\cup \Lambda(x^*).
$$
As shown in Section \ref{sec_diff_ineq_prelims}, if $x^*$ is non-degenerate, then the set $\Gr(g)$ (and hence $\Lambda(x^*)$) has Hausdorff dimension at most $\kappa-2$.
Thus $Q$ has Hausdorff dimension at most $\kappa-2$.\footnote{Proposition \ref{prop_finite_time_convergence} can easily be generalized to say that any set $A\subset X$ such that $\cl A$ has Hausdorff dimension at most $\kappa-2$, can only be reached in finite time from a set of $\calL^\kappa$-measure zero by substituting $A$ for $\Lambda(x^*)$ throughout the section.}

The BR-dynamics vector field (see \eqref{def_FP_autonomous}) is given by the map $\FP:X\rightrightarrows X$, where
\begin{equation} \label{def_FP_map}
\FP(x) := \BR(x) - x.
\end{equation}

Let
\begin{align}\label{def_calZ}
 \calZ := \big\{x\in X\backslash Q:~x\in \calI_{i,k,\ell}~ \mbox{ for some } i,k,\ell \mbox{ with normal } \nu  & \mbox{ at } x, \\
 \mbox{ and } \nonumber \nu\cdot z = 0 \mbox{ for some }  z\in & ~\FP(x)\big\}.
\end{align}

Since each $\calI_{i,k,\ell}$ has Hausdorff dimension at most $\kappa-1$, $\calZ$ has Hausdorff dimension at most $\kappa-1$. We define the \emph{relative boundary} of $\calZ$, denoted here as $\partial \calZ$ as follows.
If $\calZ$ has Hausdorff dimension $\kappa-2$ or less, then let $\partial \calZ := \calZ$.
If $\calZ$ has Hausdorff dimension $\kappa-1$ then it may be expressed as the union of a finite number of smooth $(\kappa-1)$-dimensional surfaces, denoted here as $(\calZ_s)_{s=1}^{N_z}$, $1\leq N_z < \infty$, and a component with Hausdorff dimension at most $\kappa-2$, denoted here as $\calZ'$. That is, $\calZ = (\bigcup_{s=1}^{N_z} \calZ_s) \cup \calZ'$.
Each $\calZ_s$, $s=1,\ldots,N_z$ is contained in some indifference surface, which we denote here as $\calI_s$. Define the relative interior of $\calZ_s$ (with respect to $\calI_s$) as
$\ri \calZ_s := \{x\in Z_s:~ \exists \epsilon>0 \mbox{ s.t. } B(x,\epsilon)\cap\calI_s \subset \calZ_s\}$, and define the relative boundary of $\calZ_s$ as $\partial \calZ_s:=\cl \calZ_s \backslash \ri \calZ_s$.
We then define the relative boundary of $\calZ$ as
$$
\partial \calZ := \left(\bigcup_{s=1}^{N_z} \partial \calZ_s\right) \cup \calZ'.
$$
Note that $\partial \calZ$ is a set with Hausdorff dimension at most $\kappa-2$.
By Lemma \ref{lemma_calZ_normal} in the appendix, the BR-dynamics vector field is oriented tangentially along $\calZ$, in the sense that for any $x\in \calZ$ there holds $\nu\cdot y = 0$ for any vector $\nu$ normal to $\calZ$ at $x$, and any $y\in \FP(x)$.
This implies that BR paths can only enter or exit $\calZ$ through $\partial \calZ$.

Let
$$
X^* := X\backslash\left( Q\cup \calZ\right)
$$


\begin{example} \label{example3}
Consider the 3-player 2-action identical payoffs game with the (identical) utility function given in Figure \ref{example2_utility}
\renewcommand{\arraystretch}{1.2}
\begin{figure}[h]
    \centering
    \begin{subfigure}{.3\textwidth}
        \begin{tabular} {r|c|c|}
        \multicolumn{3}{c}{\hspace{1.5em} $A$\hspace{1.4em} $B$}\\
        \cline{2-3}
        $A$ & $1,~1$ & $0,~0$ \\
        \cline{2-3}
        $B$ & $0,~0$ & $2,~2$ \\
        \cline{2-3}
        \end{tabular}
        \caption{Player 3 plays $A$}
    \end{subfigure}
    \hspace{.5cm}
    \begin{subfigure}{.3\textwidth}
        \begin{tabular} {r|c|c|}
        \multicolumn{3}{c}{\hspace{1.5em} $A$\hspace{1.4em} $B$}\\
        \cline{2-3}
        $A$ & $3,~3$ & $0,~0$ \\
        \cline{2-3}
        $B$ & $0,~0$ & $6,~6$ \\
        \cline{2-3}
        \end{tabular}
        \caption{Player 3 plays $B$}
        \label{example2_utility}
    \end{subfigure}
    \caption{Utility structure for game in Example \ref{example3}}
    \vspace{-1.2em}
\end{figure}
\renewcommand{\arraystretch}{1.0}
We will refer to the column player as player 1, the row player as player 2, and the remaining player as player 3.
If player 3 plays action $A$ (respectively $B$), then the game is reduced to a $2\times 2$ game with BRD vector field shown in Figure \ref{fig:Ex3_subgame1} (Figure \ref{fig:Ex3_subgame2}), where the first subgame is familiar from Example \ref{example_FP1}.
The strategy space $X$ of the full game is a 3-dimensional cube---the BRD vector field for this game is visualized in Figures \ref{fig:Ex3_full}--\ref{fig:Ex3_full2}, where the blue arrows represent the vector field and the green surfaces represent indifference surfaces.
A plot of 70 BRD trajectories with random initializations is shown in Figure \ref{fig:Ex3_FP}.
Note that within the face $x_3 = 0$ (respectively, $x_3=1$) the vector field coincides with the $2\times 2$ vector field in Figure \ref{fig:Ex3_subgame1} (Figure \ref{fig:Ex3_subgame2}). Note also that the vector field jumps at the indifference surfaces.

The indifference surfaces are explicitly given by $\calI_i = \{x:~ x_i = \frac{1+5x_3}{3+6x_3} \}$, $i=1,2$ and $\calI_3 = \emptyset$ (since players have only two actions, we drop the additional sub-indices on $\calI$).
The game has one (incompletely) mixed equilibrium at $x^* = (\frac{2}{3},\frac{2}{3},1)$. The graph of the map $g$ associated with this equilibrium (see Section \ref{sec_diff_ineq_prelims}) coincides with the set $\calI_1\cap\calI_2$ (cf. Example \ref{example2}).
The set $Q$ is given by
$$
Q = \calI_1\cap\calI_2 = \bigg\{x:~ x_1 = x_2 = \frac{1+5x_3}{3+6x_3}\bigg\}.
$$
Note that this contains the set $\Lambda(x^*) = \Gr(g)$ and all points at which indifference surfaces intersect.

Figure \ref{fig:Ex3_side} shows a side view of the 3D BRD vector field. The surface $\calI_2$ is seen from this angle as the green curve. The BRD vector field is tangential to $\calI_2$ at any point $x$ with $(x_1,x_3) = (\frac{1}{2},\frac{1}{4})$, $x_2\geq \frac{1}{2}$. A similar situation holds for $\calI_1$.

From this we see that set $\calZ$ (the sub-manifold where trajectories may enter some indifference surface tangentially) is given by
$$
\calZ = \calZ_1\cup\calZ_2,
$$
where
$\calZ_1 = \{x:~ (x_2,x_3) := (\frac{1}{2},\frac{1}{4}),~x_1\geq \frac{1}{2}\}$ and $\calZ_2 := \{x:~ (x_1,x_3) = (\frac{1}{2},\frac{1}{4}),~x_2\geq \frac{1}{2}\}$.

{\tiny
\begin{figure}[h]
\centering
    \begin{subfigure}[b]{.38\textwidth}
        \centering
        \includegraphics[height=.85\textwidth]{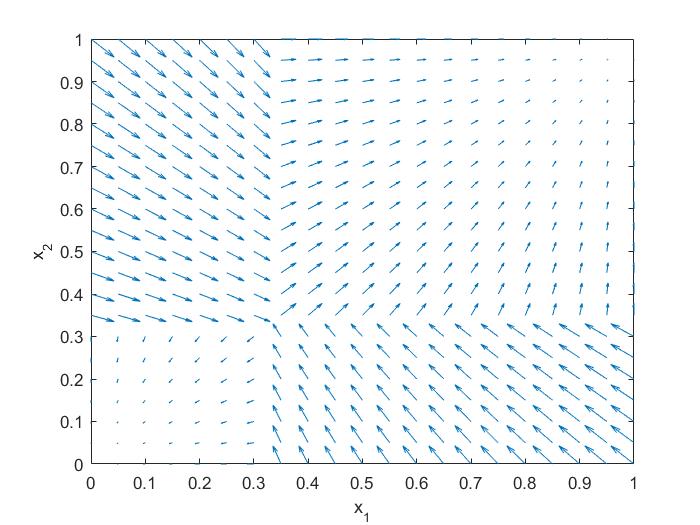}
        \caption{ \small Reduced $2\times 2$ BRD vector field along face $x_3=0$ in Example \ref{example3}.}
        \label{fig:Ex3_subgame1}
    \end{subfigure}
    \hspace{3em}
    \begin{subfigure}[b]{.38\textwidth}
        \centering
        \includegraphics[height=.85\textwidth]{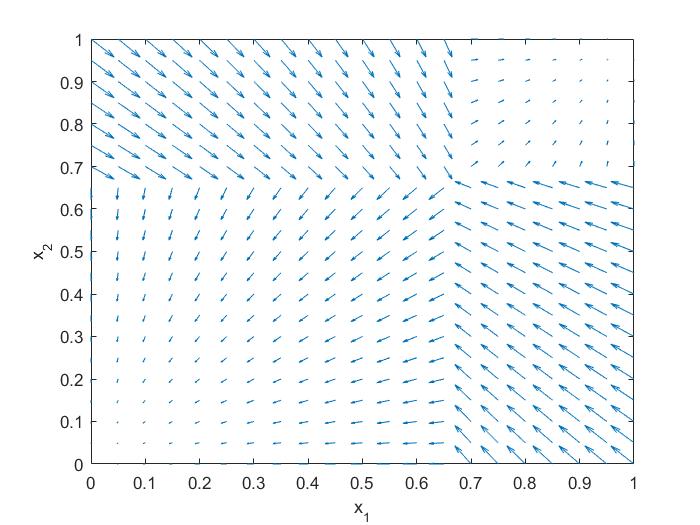}
        \caption{\small Reduced $2\times 2$ vector field along face $x_3=1$ in Example \ref{example3}.}
        \label{fig:Ex3_subgame2}
    \end{subfigure}
    \newline
    \begin{subfigure}[b]{.4\textwidth}
        \centering
        \includegraphics[height=.75\textwidth]{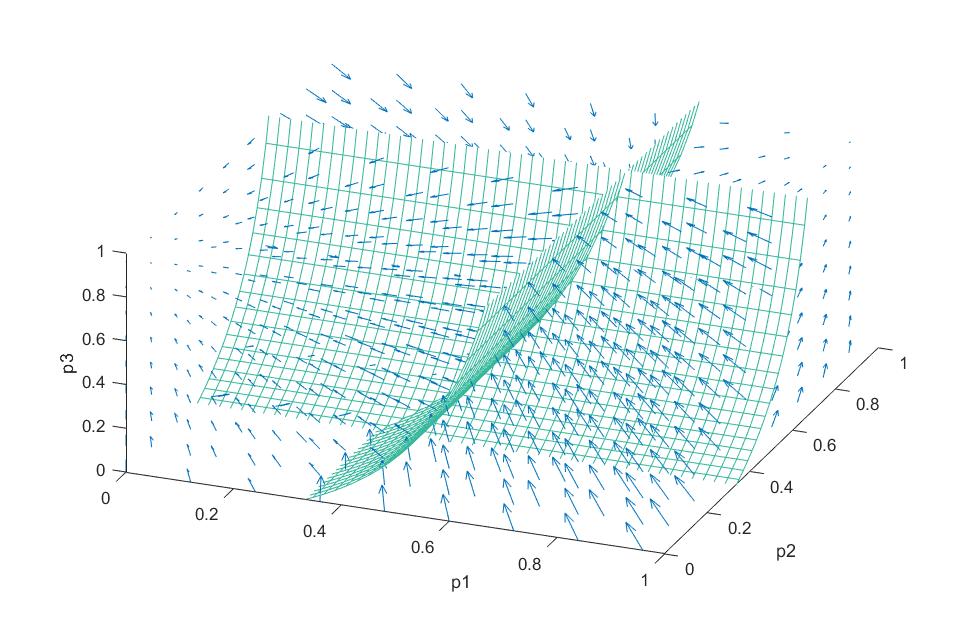}
        \caption{\tiny BRD vector field for game in Example \ref{example3}.}
        \label{fig:Ex3_full}
    \end{subfigure}
    \hspace{2em}
    \begin{subfigure}[b]{.4\textwidth}
        \centering
        \includegraphics[height=.75\textwidth]{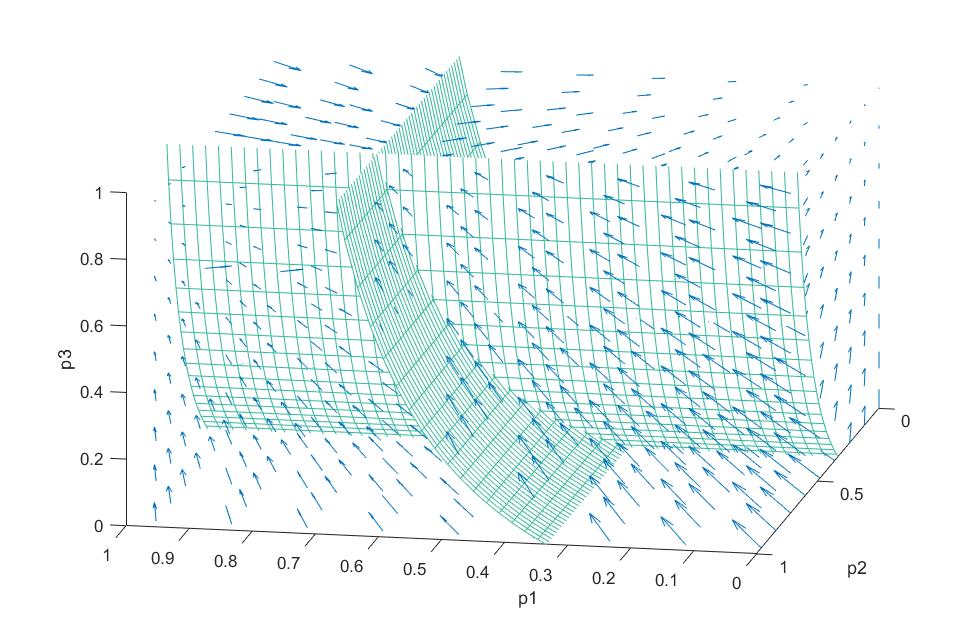}
        \caption{\small Alternate view of BRD vector field for game in Example \ref{example3}.}
        \label{fig:Ex3_full2}
    \end{subfigure}
    \newline
    \begin{subfigure}[b]{.4\textwidth}
        \centering
        \includegraphics[height=.75\textwidth]{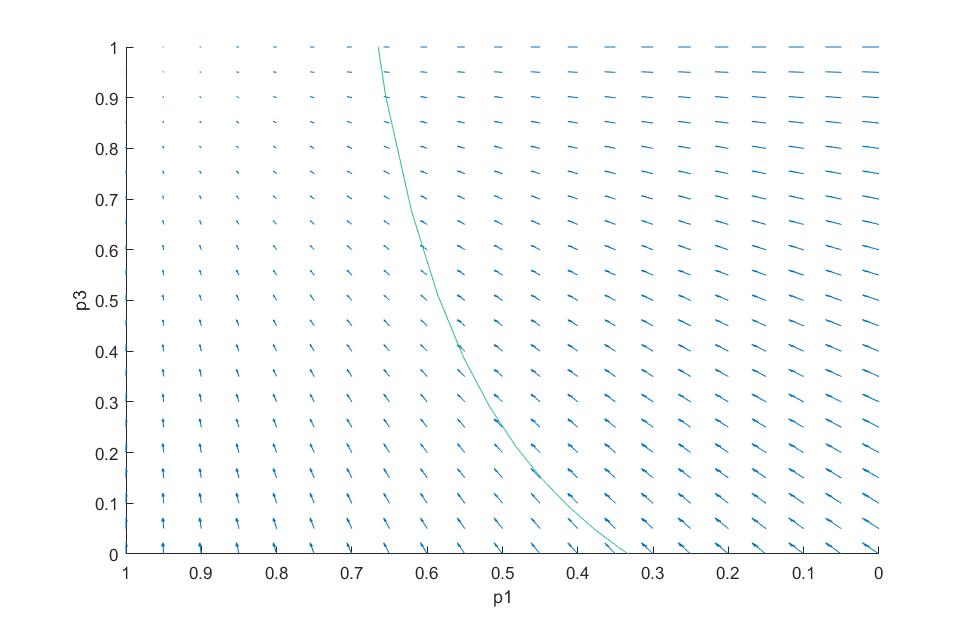}
        \caption{\small Side view of BRD vector field showing $\calI_2$ in Example \ref{example3}.}
        \label{fig:Ex3_side}
    \end{subfigure}
    \hspace{2em}
    \begin{subfigure}[b]{.4\textwidth}
        \centering
        \includegraphics[height=.75\textwidth]{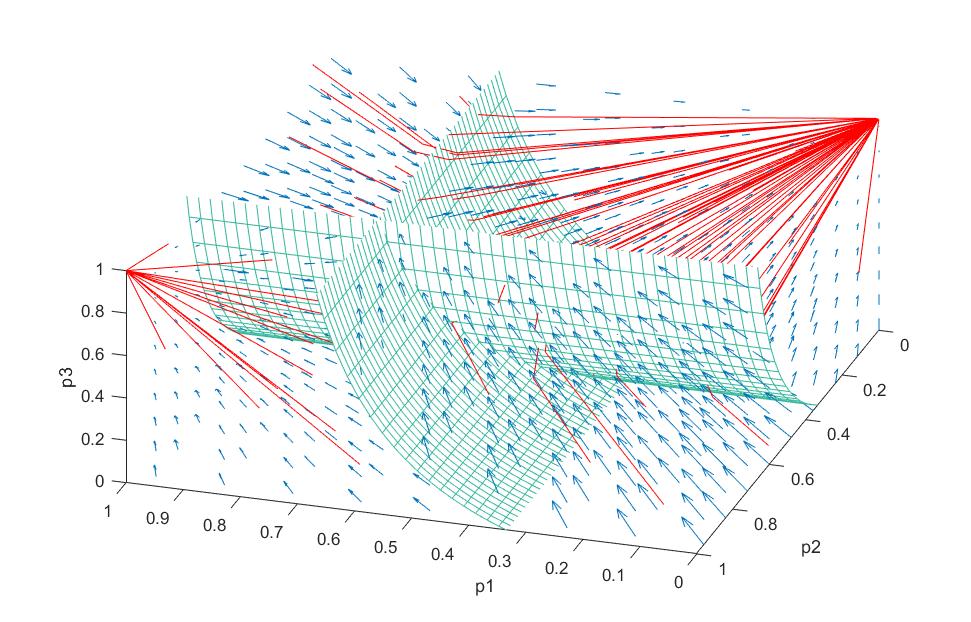}
        \caption{\small BRD trajectories in Example \ref{example3}. }
        \label{fig:Ex3_FP}
        \label{label4}
    \end{subfigure}
    \caption{}
\end{figure}

\vspace{-3em}

}

\end{example}

The following technical lemma will be used to show that the BR dynamics are well posed within $X^*$ (see Lemma \ref{lemma_unique_flow}). It is a consequence of the fact that the BR-dynamics vector field can only have jumps that are tangential to indifference surfaces.
\begin{lemma} \label{lemma_tangential_jumps}
Suppose $x\in X^*$ is in some indifference surface $\calI_{i,k,\ell}$. Then there exists a constant $c>0$ and a vector $\nu$ that is normal to $\calI_{i,k,\ell}$ at $x$, such that
$$
\nu\cdot z \geq c, \quad \forall ~ z\in \FP(\tilde x)
$$
for all $\tilde x \in X^*$ in a neighborhood of $x$.
\end{lemma}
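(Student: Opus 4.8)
The plan is to exploit the fact that the only discontinuities of the BR-dynamics vector field $\FP$ across $\calI_{i,k,\ell}$ lie in the coordinate block of player $i$, whereas any normal to $\calI_{i,k,\ell}$ lies entirely in the complementary block. I would begin by fixing the defining function $\phi(x) := U(y_i^k,x_{-i}) - U(y_i^\ell,x_{-i})$, so that $\calI_{i,k,\ell} = \{\phi = 0\}$ and (since $\Gamma$ is non-degenerate and $x\notin \tilde Q$ lies on the $(\kappa-1)$-dimensional part of the surface, by Lemma \ref{lemma_thin_indifference_surface}) the vector $\nu := \nabla\phi(x)\neq 0$ is normal to $\calI_{i,k,\ell}$ at $x$. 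The crucial structural observation is that $\phi$ does not depend on $x_i$, so the player-$i$ block of $\nu$ vanishes; that is, $\nabla_{x_i}\phi \equiv 0$.

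Next I would analyze the local structure of $\FP$ near $x$. Since $x\in X^*\subseteq Q^c$, the point $x$ lies off $\tilde Q$, so there is a neighborhood $V$ of $x$ meeting only those indifference surfaces that pass through $x$, and any two such surfaces have parallel normals (otherwise $x\in\tilde Q$). Each such surface is the indifference surface of some player $j$, whose defining function is independent of $x_j$; parallelism of the normals to $\nu$ then forces the player-$j$ block $\nu_j$ of $\nu$ to vanish for \emph{every} player $j$ who is indifferent at $x$. Consequently, writing any $z\in\FP(\tilde x)=\BR(\tilde x)-\tilde x$ as $z=(z_j)_j$, the contribution $\nu_j\cdot z_j$ of every indifferent player is zero, regardless of the (possibly set-valued) selection $z_j\in\BR_j(\tilde x_{-j})-\tilde x_j$. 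Every player $j$ with $\nu_j\neq 0$ is therefore \emph{not} indifferent at $x$, hence has a strict best response $v_j$ which remains the unique best response throughout a sufficiently small $V$ (a strict maximizer of the continuous $U$ is robust to perturbation). Hence on $V$,
\begin{equation}\label{eq_tangential_plan}
\nu\cdot z = \sum_{j:\,\nu_j\neq 0}\nu_j\cdot(v_j - \tilde x_j),
\end{equation}
a quantity that is affine, hence continuous, in $\tilde x$ and independent of the selection $z\in\FP(\tilde x)$. (Because $V$ meets only surfaces through $x$, every player indifferent at some $\tilde x\in V\cap X^*$ is already indifferent at $x$, so the right-hand side indeed captures $\nu\cdot z$ throughout $V\cap X^*$.)

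Finally I would close the argument by continuity. At $\tilde x = x$, the right-hand side of \eqref{eq_tangential_plan} equals $\nu\cdot z$ for every $z\in\FP(x)$, and this value is nonzero: by the definition of $\calZ$, the fact that $x\notin\calZ$ means that no $z\in\FP(x)$ is annihilated by the normal $\nu$ to $\calI_{i,k,\ell}$. After reorienting $\nu$ if necessary, set $c_0 := \nu\cdot z\big|_{\tilde x = x} > 0$. Since the expression in \eqref{eq_tangential_plan} is continuous in $\tilde x$, it exceeds $c := c_0/2 > 0$ on a smaller neighborhood $V'\subseteq V$; for every $\tilde x\in V'\cap X^*$ and every $z\in\FP(\tilde x)$ we then obtain $\nu\cdot z\geq c$, as claimed.

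I expect the main obstacle to be the bookkeeping in the second step: one must verify rigorously that all set-valuedness of $\FP$ near $x$ is confined to coordinate blocks annihilated by $\nu$. This rests on two facts to be stated carefully---that the normal to player $j$'s indifference surface has vanishing player-$j$ block, and that, because $x\notin\tilde Q$, all indifference surfaces through $x$ share a common normal direction, so that $\nu$ simultaneously annihilates every indifferent player's block. Once this ``tangential jump'' structure is established, identity \eqref{eq_tangential_plan} and the continuity conclusion are routine. This lemma is precisely the transversal counterpart to Lemma \ref{lemma_calZ_normal}, which governs the tangential behavior of $\FP$ along $\calZ$.
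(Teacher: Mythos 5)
Your proposal is correct and takes essentially the same route as the paper's own proof: the normal to $\calI_{i,k,\ell}$ has vanishing player-$i$ block, every non-indifferent player has a locally constant pure best response, hence $\nu\cdot z$ is independent of the selection $z\in\FP(\tilde x)$ and continuous in $\tilde x$, and $x\notin\calZ$ makes this common value nonzero at $x$, so continuity yields the uniform bound. If anything, your explicit treatment of several indifference surfaces through $x$ with coinciding normals (using parallelism to force $\nu_j=0$ for every indifferent player $j$) is slightly more careful than the paper's argument, which simply asserts that no other indifference surface meets $\calI_{i,k,\ell}$ near a point of $X^*$.
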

\begin{proof}
By the definition of $\calI_{i,k,\ell}$, if $x\in \calI_{i,k,\ell}$ then for all $\hat x\in X$ such that $\hat x_{-i} = x_{-i}$ we have $\hat{x}\in \calI_{i,k,\ell}$. This implies that for any vector $\nu$ that is normal to $\calI_{i,k,\ell}$, the $(i,m)$-th component of $\nu$ must be zero for all $m=1,\ldots,K_i-1$.

Suppose that $x\in X^* \cap \calI_{i,k,\ell}$. Since $x\not\in Q$, there is a neighborhood of $x$ in which no indifference surface intersects with $\calI_{i,k,\ell}$.
This implies that for $\tilde x$ within a neighborhood of $x$, $\BR_{-i}(\tilde{x})=a_{-i}$ for some $a_{-i}$ that is a vertex of $X_{-i}$.

Together, these two facts imply that for all $\tilde x$ in a neighborhood of $x$, we have $\nu\cdot z' = \nu \cdot z''$ for all $z'\in \BR(x)$, $z''\in \BR(\tilde x)$, for any vector $\nu$ that is normal to $\calI_{i,k,\ell}$ at $x$.
Since $x\notin \calZ$, recalling the form of $\FP$ \eqref{def_FP_map}, this means we can choose a vector $\nu$ that is normal to $\calI_{i,k,\ell}$ at $x$ and a constant $c>0$ such that $\nu\cdot z > c$ for $z\in \FP(\tilde{x})$ for all $\tilde{x}$ in a neighborhood of $x$.
\end{proof}

The following lemma gives a well-posedness result for the BR dynamics inside $X^*$.
\begin{lemma} \label{lemma_unique_flow} For any $x_0\in X^*$, there exists a $T\in(0,\infty]$ and a unique absolutely-continuous function $\vx:[0,T]\to X^*$, with $\vx(0) = x_0$, solving the differential inclusion $\frac{d}{dt} \vx(t) \in \FP(\vx(t))$ for almost all $t\in[0,T]$.
\end{lemma}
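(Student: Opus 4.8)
The plan is to leverage the special structure exposed by Lemma \ref{lemma_tangential_jumps}: throughout $X^*$ the set-valued field $\FP$ is piecewise affine, and wherever it is discontinuous the discontinuity consists of a transversal crossing of a single indifference surface with uniformly positive normal speed. This transversality is precisely what restores well-posedness for the otherwise discontinuous inclusion.

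First I would record the local structure of $\FP$. Away from every indifference surface $\calI_{i,k,\ell}$ each player has a unique best response, so $\BR(x)$ is a single vertex $a$ of $X$ that is locally constant; there the inclusion collapses to the affine equation $\dot{\vx} = a - \vx$, a contraction toward $a$ whose solutions exist, are unique, and never blow up. Because $x_0 \in X^* = X \setminus (Q \cup \calZ)$ and every point at which two or more indifference surfaces meet belongs to $\tilde Q \subseteq Q$, I can fix a neighborhood $U$ of $x_0$ meeting at most one indifference surface $\calI := \calI_{i,k,\ell}$, with $U \cap (Q \cup \calZ) = \emptyset$. If $U$ meets no surface, the assertion is the classical existence-uniqueness statement for $\dot{\vx} = a - \vx$. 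Otherwise, writing $\psi(x) := U(y_i^k, x_{-i}) - U(y_i^\ell, x_{-i})$ so that $\calI = \{\psi = 0\}$ locally, the two open pieces $\{\psi > 0\} \cap U$ and $\{\psi < 0\} \cap U$ are each a region of constant best response on which $\FP$ is affine with distinct vertices $a^+$ and $a^-$, and Lemma \ref{lemma_tangential_jumps} supplies a normal $\nu$ (parallel to $\nabla \psi(x_0)$) and a constant $c > 0$ with $\nu \cdot z \ge c$ for every $z \in \FP(\tilde x)$ and every $\tilde x \in U$.

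The crux is local uniqueness at such a crossing, which I would obtain by monitoring the scalar $\psi(\vx(t))$. For any absolutely continuous solution with $\vx(0) = x_0 \in \calI$ that remains in $U$, continuity of $\nabla \psi$ together with the alignment $\nabla \psi(x_0) \parallel \nu$ lets me shrink $U$ so that $\frac{d}{dt} \psi(\vx(t)) = \nabla \psi(\vx(t)) \cdot \dot{\vx}(t) \ge c' > 0$ for almost every $t$, where the bound $\nu \cdot \dot{\vx}(t) \ge c$ of Lemma \ref{lemma_tangential_jumps} is used. Hence $\psi(\vx(t))$ is strictly increasing, so $\psi(\vx(t)) > 0$ for every $t \in (0, \delta)$ and the solution cannot return to $\calI$ while it stays in $U$. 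Consequently $\vx(t) \in \{\psi > 0\} \cap U$ for $t \in (0,\delta)$, where $\FP(x) = a^+ - x$ is affine; two solutions therefore satisfy the same linear ODE on $(0, \delta)$ and coincide there, and by continuity on $[0,\delta]$. The case $x_0 \notin \calI$ is easier still. This produces a unique solution on a short interval $[0, \delta]$.

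Finally I would globalize by continuation. Patching these local solutions yields a unique, maximally extended, absolutely continuous trajectory taking values in $X^*$; since $X$ is compact and each affine field $a - x$ is bounded, finite-time blow-up is impossible, so the trajectory can be continued as long as it stays in $X^*$ (between consecutive crossings a strictly positive time elapses while the affine flow traverses from one surface to the next). Taking $T \in (0, \infty]$ to be the first time the trajectory meets $\cl(Q \cup \calZ)$, and $T = \infty$ if it never does, gives the claimed $\vx \colon [0,T] \to X^*$. I expect the crossing step to be the main obstacle: one must rule out a solution lingering on, or sliding along, an indifference surface, and rule out its crossing in more than one way. This is exactly where the uniform transversality of Lemma \ref{lemma_tangential_jumps}---a consequence of $x_0 \notin \calZ$ together with $x_0 \notin Q$---is indispensable, for on the excluded tangency set $\calZ$ (and at the mixed equilibria contained in $Q$) the inclusion genuinely admits a continuum of solutions, as the $2 \times 2$ example of Section \ref{sec_example} already shows.
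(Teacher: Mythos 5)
Your proposal is correct, and while it shares the paper's overall architecture---classical Lipschitz flow away from indifference surfaces, transversality from Lemma \ref{lemma_tangential_jumps} at a crossing, then patching of local solutions---the mechanism you use at the crux (uniqueness \emph{through} a crossing) is genuinely different. The paper argues by time reversal: if two solutions agreed up to the first crossing time $t^*$ but had separated by time $t^*+\delta$, the time-reversed flow, being classical (of the form $\dot{x} = a + x$) on $[0,\delta)$, would keep the two endpoints distinct at time $\delta$, contradicting that both backward trajectories must land on the common point $\vx(t^*)$. You instead argue forward: the uniform bound $\nu\cdot z \geq c$ makes $t\mapsto \psi(\vx(t))$ strictly increasing along \emph{every} solution, so all solutions immediately enter the \emph{same} open region $\{\psi>0\}$, where $\FP$ is a single Lipschitz affine field and classical uniqueness applies. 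Your version buys something the paper's does not: it pins down which side of the surface every solution exits to (the backward-flow argument is agnostic on this and only excludes separation by contradiction), and it dispenses with the time-reversed inclusion entirely; the paper's version, conversely, never needs to orient $\psi$ or track the sign of the normal. Two small caveats, neither fatal: your reduction to a neighborhood meeting at most one surface reads $\tilde{Q}$ as containing \emph{all} points where two or more indifference surfaces meet, whereas the paper's definition literally includes only intersections with non-coinciding normals---but the paper's own proof of Lemma \ref{lemma_tangential_jumps} makes the same implicit assumption (``there is a neighborhood of $x$ in which no indifference surface intersects with $\calI_{i,k,\ell}$''), so you are faithful to the source's reading; and your continuation step asserts but does not prove that crossing times cannot accumulate before the trajectory reaches $\cl(Q\cup\calZ)$, a point easily closed (there are finitely many surfaces, and strict monotonicity of $\psi$ near a putative accumulation point in $X^*$ forbids infinitely many returns to any one of them) and on which the paper's proof is equally silent.
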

\begin{proof}
If $x\in X^*$ is not on any indifference surface, then $\FP$ is single valued in a neighborhood of $x$, and \eqref{def_FP_autonomous} is (locally) a Lipschitz differential equation with unique local solution.

Suppose that $x_0\in X^*$ is on an indifference surface $\calI$.
By Lemma \ref{lemma_tangential_jumps}
there exists a constant $c>0$ such that for all $\tilde x$ in a neighborhood of $x$ we have $\FP(\tilde x)\cdot \nu>c$, where $\nu$ is a normal vector to $\calI$ at $x$.
This implies that
for $\delta>0$ sufficiently small we have $\{t\in[-\delta,\delta]:~\vx(t)\in \calI\} = \{0\}.$
Furthermore, since $x\not\in Q$, for $\delta>0$ sufficiently small we have
\begin{equation} \label{eq_zero_time_on_calI}
\{t\in[-\delta,\delta]:~\vx(t)\in \calI_{i,k,\ell}, \mbox{ for any } i,k,\ell\} = \{0\}.
\end{equation}

Now, let $x_0\in X^*$ and let $(\vx(t))_{t\geq 0}$ and $(\vz(t))_{t\geq 0}$ be two solutions to \eqref{def_FP_autonomous} with $\vx(0)=\vz(0)=x_0$. If $(\vx(t))_{t\geq 0}$ never crosses an indifference surface, then the flow is always classical and the two solutions always coincide; i.e., $\vx(t) = \vz(t),~t\geq 0$.
Suppose that $(\vx(t))_{t\geq 0}$ does cross an indifference surface and let $t^*\geq 0$ be first time when such a crossing occurs.
For $t<t^*$, the flow is classical and we have $\vx(t) = \vz(t)$ for $t\leq t^*$.

By \eqref{eq_zero_time_on_calI} we see that for $\delta >0$ sufficiently small, $\vx(t)$ is not in any indifference surface for $t \in [t^*-\delta,t^*+\delta]\backslash\{t^*\}$.
Suppose that at time $t=t^*+\delta$ we have $\vx(t) = \hat{x}\not= \hat{z}= \vz(t)$. Let $(\tilde{\vx}(\tau))_{\tau\geq 0}$ and $(\tilde{\vz}(\tau))_{\tau\geq 0}$ be solutions to the time-reversed BR-dynamics flow with $\tilde{\vx}(0) = \hat{x}$ and $\tilde{\vz}(0) = \hat{z}$.

Since $\hat{x}\not= \hat{z}$, and since the time-reversed flow is classical for $0\leq\tau<\delta$ (in particular, of the form $\dot{x} = a+x$ for some constant $a$), we get $\tilde{\vx}(\delta) \not= \tilde{\vz}(\delta)$. But this is impossible because
the paths $(\vx(t))_{t\geq 0}$ and $(\vz(t))_{t\geq 0}$ are absolutely continuous and
we already established that $\tilde{\vx}(\delta) = \vx(t^*) = \vz(t^*) = \tilde{\vz}(\delta)$.
\end{proof}

\begin{remark}
We emphasize that Lemma \ref{lemma_unique_flow} only shows uniqueness for a finite-time horizon. Uniqueness for an infinite-time horizon will be obtained at a later point (see Section \ref{sec_uniqueness}).
\end{remark}

Having established the well-posedness of BR dynamics in $X^*$ (and hence, almost everywhere in $X$) we will now proceed to show that, in some appropriate sense, the BR-dynamics vector field has bounded divergence (see Lemma \ref{lemma_FP_divergence}).
Of course, $\FP$ \eqref{def_FP_map} is a discontinuous set-valued function and the divergence of $\FP$ in the classical sense is not well defined. Instead, we will find it convenient to view $\FP$ as a function of bounded variation and consider an appropriate measure-theoretic notion of divergence for $\FP$.
With this in mind, we will now briefly introduce the notion of a function of bounded variation and an appropriate notion of divergence for such functions.

As a matter of notation, we say that $\lambda$ is a signed measure on $\R^\kappa$ if there exists a Radon measure $\mu$ on $\R^\kappa$ and a $\mu$-measurable function $\sigma:\R^\kappa \rightarrow \{-1, 1\}$ such that
\begin{equation} \label{def_signed_measure}
\lambda(K) = \int_K \sigma d\mu
\end{equation}
for all compact sets $K\subset \R^\kappa$. When convenient, we write $\sigma\mu$ to denote the signed measure $\lambda$ in \eqref{def_signed_measure}.

Letting elements $x\in X$ be written componentwise as $(x_s)_{s=1}^\kappa$, we recall \cite{EvansGariepy} that a function $u \in L^1(\Omega)$ (with $\Omega \subseteq \R^\kappa$, $\Omega$ open) is a function of bounded variation (i.e., a BV function) if there exist finite signed Radon measures $D_s u$ such that the integration by parts formula
\begin{equation}\label{eq_integration_by_parts}
\int_{\Omega} u \frac{\partial \phi}{\partial x_s} \dx = - \int_{\Omega} \phi \,d D_s u
\end{equation}
holds for all $\phi \in C^\infty_c(\Omega)$. The measure $D_s u$ is called the \emph{weak}, or  \emph{distributional}, \emph{partial derivative of $u$} with respect to $x_s$. We let $Du := (D_s u)_{s = 1,\dots, \kappa}$.

The measure $Du$ can be uniquely decomposed into three parts \cite{ambrosio2000functions} [cite other BV book]
\begin{equation}\label{eq_measure_decomposition}
Du = \nabla u \mathcal{L}^\kappa + Cu + Ju.
\end{equation}
Here $Ju$ is supported on a set $J_u$ with Hausdorff dimension $\kappa-1$, and $Cu$ is singular with respect to $\calL^\kappa$ and satisfies $Cu(E) = 0$ for all sets $E$ with finite $\calH^{\kappa-1}$ measure.

The $L^1$ function $\nabla u$ is analogous to a classical derivative, and in particular if $u$ is differentiable on an open set $V$ then $Du = \nabla u \mathcal{L}^\kappa$ on that set, with $\nabla u$ matching the classical derivative. Furthermore, if $u$ jumps across a smooth $(\kappa-1)$-dimensional hypersurface, then for $x$ on the hypersurface we have
\begin{equation} \label{eq_jump_decomposition}
Du = Ju = (u^+ - u^-) \nu d\calH^{\kappa-1},
\end{equation}
where $u^+$ is the value of $u$ on one side of the surface, $u^-$ is the value on the other, and $\nu$ is the normal vector pointing from $u^-$ to $u^+$ \cite{ambrosio2000functions}.

A vector-valued function $f \in L^1(\Omega : \R^\kappa)$ is a function of bounded variation if each of its components is also of bounded variation. Letting $f$ be written componentwise as $f=(f^s)_{s=1}^\kappa$, we write $Df := (D_j f^s)_{j,s = i, \dots, \kappa}$.

Next we define the divergence of a function $f \in L^1(\Omega : \R^\kappa)$, denoted by $D \cdot f$, as the measure
\[
D \cdot f := \sum_{s=1}^\kappa D_s f^s.
\]

Given a constant $c\in\R$, we say that $D \cdot f = c$ if $ D \cdot f = \frac{ d D \cdot f}{d \mathcal{L}^\kappa} \mathcal{L}^\kappa$, and $\frac{ d D \cdot f}{d \mathcal{L}^\kappa} = c$, where $\frac{ d D \cdot f}{d \mathcal{L}^\kappa}$ denotes the Radon-Nikodym derivative.
The following lemma characterizes the divergence of the BR-dynamics vector field. As a matter of notation, if a function $f:X \rightarrow X$ satisfies $f(x) \in \FP(x)$ for all $x\in X$ then we say $f$ is a \emph{selection} of $\FP$.
\begin{lemma} \label{lemma_FP_divergence}
For every selection $f$ of $\FP$, the vector field $f$ satisfies $D\cdot f = -1$.
\end{lemma}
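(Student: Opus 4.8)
The plan is to exploit the piecewise-affine structure of $f$ and to reduce the computation of $D\cdot f$ to the BV decomposition \eqref{eq_measure_decomposition}, showing that the singular part of the divergence measure vanishes while its absolutely continuous density is a constant. Note first that any two selections of $\FP$ can differ only on the union of indifference surfaces $\bigcup_{i,k,\ell}\calI_{i,k,\ell}$, which is $\calL^\kappa$-null; hence all selections coincide as $L^1$ (and $BV$) functions, and it suffices to treat one of them. On the open set $X\setminus\bigcup_{i,k,\ell}\calI_{i,k,\ell}$ every player has a unique best response, so $\BR$ is locally a constant vertex $a$ and $f(x)=\BR(x)-x=a-x$ is affine. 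Because there are finitely many surfaces $\calI_{i,k,\ell}$, each a finite union of smooth $(\kappa-1)$-dimensional hypersurfaces meeting only in a set of Hausdorff dimension at most $\kappa-2$ (Lemma \ref{lemma_thin_indifference_surface}), the map $f$ is bounded, affine with bounded gradient off these surfaces, and jumps only across them; this yields $f\in BV(X;\R^\kappa)$.

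On each affine piece a one-line computation of the classical divergence of $f(x)=a-x$ shows it to be a constant independent of the region, so the absolutely continuous part $\nabla f\,\calL^\kappa$ of $D\cdot f$ already carries the asserted constant density on the full-measure set $X\setminus\bigcup\calI_{i,k,\ell}$. It therefore remains to prove that the singular part of $D\cdot f=\sum_{s=1}^\kappa D_sf^s$ is identically zero, i.e.\ that neither the Cantor part $Cf$ nor the jump part $Jf$ contributes.

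The Cantor part vanishes because, off $\bigcup\calI_{i,k,\ell}$, the function $f$ is smooth and $Df$ is absolutely continuous, so the entire singular part of $Df$ is concentrated on $\bigcup\calI_{i,k,\ell}$, a set that is $\sigma$-finite for $\calH^{\kappa-1}$; since $Cf$ gives zero mass to every set of finite $\calH^{\kappa-1}$ measure, $Cf=0$. For the jump part I would invoke the tangentiality already recorded in the proof of Lemma \ref{lemma_tangential_jumps}: since $\calI_{i,k,\ell}$ is cut out by an equation in $x_{-i}$ alone, every normal $\nu$ to $\calI_{i,k,\ell}$ has all of its player-$i$ components equal to zero. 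On crossing $\calI_{i,k,\ell}$ (away from the lower-dimensional intersection set) only player $i$'s best response switches between two vertices of $X_i$, so the jump $f^+-f^-=\BR^+-\BR^-$ is supported entirely in the player-$i$ coordinates; hence $(f^+-f^-)\cdot\nu=0$. By the jump formula \eqref{eq_jump_decomposition}, the contribution of $Jf$ to $\sum_s D_sf^s$ is exactly $(f^+-f^-)\cdot\nu\,d\calH^{\kappa-1}=0$. Adding the three parts, $D\cdot f$ equals the constant density times $\calL^\kappa$. (The set $Q$ where distinct surfaces meet has Hausdorff dimension at most $\kappa-2$, hence is $\calH^{\kappa-1}$-null and is negligible for the jump term.)

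The main obstacle is the measure-theoretic bookkeeping rather than the tangentiality identity, which is short. Concretely, one must justify that $f$ is genuinely of bounded variation on all of $X$ and that $Df$ has no Cantor component---this relies on non-degeneracy, through Lemma \ref{lemma_thin_indifference_surface}, to guarantee that the indifference surfaces are smooth $(\kappa-1)$-dimensional sets intersecting in codimension at least two---and that the jump formula \eqref{eq_jump_decomposition} is applicable $\calH^{\kappa-1}$-almost everywhere on each $\calI_{i,k,\ell}$ once the exceptional set $Q$ is discarded. Once these structural facts are secured, the identity $(f^+-f^-)\cdot\nu=0$ forces the singular part of the divergence to vanish and delivers the claim.
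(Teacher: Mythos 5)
Your proof is correct and follows essentially the same path as the paper's: the absolutely continuous density comes from the piecewise-affine structure, the Cantor part vanishes because the singular part of $Df$ is carried by the indifference surfaces (a set of finite $\calH^{\kappa-1}$ measure), and the jump part vanishes by the tangentiality identity---every normal $\nu$ to $\calI_{i,k,\ell}$ has zero player-$i$ components while only the player-$i$ coordinates of $f$ jump across it---with the paper merely organizing this last step componentwise (showing $J_{i,k}f_i^k = 0$ for each diagonal pair $(i,k)$), which is exactly your aggregate identity $(f^+ - f^-)\cdot\nu = 0$ read one entry at a time. The only caveat, inherited from the paper's own statement rather than introduced by you, concerns the constant itself: on each affine piece $f(x) = a - x$, so the ``one-line computation'' actually gives classical divergence $-\kappa$ rather than $-1$, a discrepancy that is harmless downstream since the Gronwall estimate in Proposition \ref{prop_finite_time_convergence} only requires the density to be a finite constant.
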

The proof of this lemma follows from the fact that $\FP$ is piecewise linear, and any jumps in $\FP$ are tangential to indifference surfaces.
\begin{proof}
Suppose $f$ is a selection of $\FP$, and let $f$ be written componentwise as \newline $f = (f_i^k)_{\substack{i=1,\ldots,N,\\ k=1,\ldots,K_i-1}}$.
Let $i\in\{1,\ldots,N\}$ and $k\in\{1,\ldots,K_i-1\}$.
Let $D_{j,\ell} f_i^k$ denote the weak partial derivative of $f_i^k$ with respect to $x_j^\ell$, $j=1,\ldots,N$, $\ell=1,\ldots,K_j-1$, and let $D f_i^k = \left( D_{j,\ell} f_i^k \right)_{j=1,\ldots,N,~\ell=1,\ldots,K_j-1}$.  Let $J f_i^k = \left(J_{j,\ell} f_i^k\right)_{j=1,\ldots,N,~\ell=1,\ldots,K_j-1}$ denote the jump component associated with $D f_i^k$ (see \eqref{eq_measure_decomposition}).

The vector field $f$ is piecewise linear. Breaking up $f$ over regions in which it is linear we see that $\frac{d D\cdot f}{d\calL^{\kappa}} = -1$. It remains to show that $D\cdot f$ has no singular component; i.e., under the decomposition \eqref{eq_measure_decomposition}, the measure $D\cdot f$ has zero Cantor component and zero jump component.

Since $f_i^k$ is piecewise linear and only jumps on the set $\bigcup_{\ell=1,~\ell\not=k}^{K_i} \calI_{i,k,\ell}$ which has finite $\kappa-1$ measure, $f_i^k$ has no Cantor part; that is, $C f_i^k = (C_{j,\ell} f_i^k)_{\substack{j=1,\ldots,N,~ \ell=1,\ldots,\gamma_j}} = 0$ (see \eqref{eq_measure_decomposition}).
Hence, the singular component of $D\cdot f$, which we denote here as $S$, has no Cantor part and is given by $S:=\sum_{i=1}^N \sum_{k=1}^{K_i-1} J_{i,k}f_i^k$.

Suppose that $x\in \calI_{i,k,\ell}$ for some $\ell$ (recall $\ell\not= k$). Suppose $\nu$ is a vector that is normal to $\calI_{i,k,\ell}$ at $x$.
By the definition of $\calI_{i,k,\ell}$, if $x\in \calI_{i,k,\ell}$ then for all $\hat x\in X$ such that $\hat x_{-i} = x_{-i}$ we have $\hat{x}\in \calI_{i,k,\ell}$. This implies that the $(i,k)$-th component of $\nu$ must be zero.
Since $J f_i^k=((f_i^k)^+ - (f_i^k)^-)\nu\calH^{\kappa-1}$ for $x$ on $\bigcup_{\ell=1,\ell\not=k}^{K_i} \calI_{i,k,\ell}$ (see \eqref{eq_jump_decomposition}), taking the $(i,k)$-th component we get $J_{i,k}f_i^k\calH^{\kappa-1} = 0$.

Since this is true for every pair $(i,k)$ we see that $S=0$, and hence $D\cdot f = -1$ in the interior of $X$.
An identical argument holds on the boundary of $X$, and hence, $S=0$ and $D\cdot f = -1$.
\end{proof}

The following lemma shows that for sets $E\subseteq X^*$ with relatively smooth boundary, the surface integral of $\FP$ over the boundary of $E$ is well defined.
\begin{lemma} \label{lemma_div_thrm}
Let $E$ be a subset of $X^*$ with piecewise smooth boundary.
For any functions $f,g$ that are selections of $\FP$ we have
$$
\int_{\partial E} f\cdot \nu_E  d\calH^{\kappa-1} = \int_{\partial E} g\cdot \nu_E  d\calH^{\kappa-1} =: \int_{\partial E} \FP\cdot \nu_E  d\calH^{\kappa-1},
$$
where $\nu_E$ denotes the outer normal vector of $E$.
\end{lemma}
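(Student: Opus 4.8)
The plan is to show that $\int_{\partial E}(f-g)\cdot\nu_E\,d\calH^{\kappa-1}=0$ for any two selections $f,g$ of $\FP$; the common value then defines $\int_{\partial E}\FP\cdot\nu_E\,d\calH^{\kappa-1}$ unambiguously. Since $\BR$ (and hence $\FP$) is single-valued at every point not lying on an indifference surface, we have $f(x)=g(x)$ except on $\partial E\cap\bigl(\bigcup_{i,k,\ell}\calI_{i,k,\ell}\bigr)$. Moreover, the points lying on two or more indifference surfaces are contained in $Q$, which has Hausdorff dimension at most $\kappa-2$ and is therefore $\calH^{\kappa-1}$-null. Thus, up to an $\calH^{\kappa-1}$-negligible set, it suffices to prove that for each fixed triple $(i,k,\ell)$,
\[
\int_{\partial E\cap\calI_{i,k,\ell}} (f-g)\cdot\nu_E\,d\calH^{\kappa-1}=0.
\]

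First I would record two pointwise facts at a point $x\in X^*$ lying on exactly one indifference surface $\calI_{i,k,\ell}$, both already contained in the proof of Lemma~\ref{lemma_tangential_jumps}. Because $\calI_{i,k,\ell}$ is defined by the condition $U(y_i^k,x_{-i})=U(y_i^\ell,x_{-i})$, which depends on $x_{-i}$ only, every normal vector $\nu$ to $\calI_{i,k,\ell}$ has all of its player-$i$ entries equal to zero. Since $x\notin Q$, in a neighborhood of $x$ no other indifference surface is present, so $\BR_j$ is locally single-valued (constant at a vertex) for each $j\neq i$; consequently $f(x)-g(x)$ can be nonzero only in the player-$i$ block. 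Together these give the orthogonality $(f(x)-g(x))\cdot\nu=0$ for every $\nu$ normal to $\calI_{i,k,\ell}$ at $x$, which is precisely the statement that the jump of $\FP$ is tangential to the indifference surface.

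The principal step is then to show that at $\calH^{\kappa-1}$-a.e.\ point of $\partial E\cap\calI_{i,k,\ell}$ the outer normal $\nu_E$ is parallel to $\nu$, so that the orthogonality above forces the integrand to vanish. Both $\partial E$ and $\calI_{i,k,\ell}$ are (piecewise) $C^1$ hypersurfaces, so wherever they meet transversally their intersection is $(\kappa-2)$-dimensional and hence $\calH^{\kappa-1}$-null. By a density-point argument, $\calH^{\kappa-1}$-a.e.\ point of $\partial E\cap\calI_{i,k,\ell}$ is a point of density one of this intersection relative to $\partial E$; at any such point the approximate tangent planes of $\partial E$ and of $\calI_{i,k,\ell}$ must coincide (otherwise the surfaces would separate linearly and the intersection would have density zero in $\partial E$), giving $\nu_E(x)=\pm\nu(x)$. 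At these points the previous paragraph yields $(f(x)-g(x))\cdot\nu_E(x)=0$, and the exceptional points form an $\calH^{\kappa-1}$-null set; integrating gives the displayed identity.

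I expect the measure-theoretic density argument to be the main obstacle, since one must rule out any positive-$\calH^{\kappa-1}$ contribution from portions of $\partial E$ that coincide with an indifference surface and verify that the approximate tangent plane of $\partial E$ aligns with that of the surface there. Once this normal alignment is in hand, the tangentiality of the jump supplied by Lemma~\ref{lemma_tangential_jumps} closes the argument at once.
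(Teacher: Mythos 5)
Your overall route is the paper's own: selections of $\FP$ agree off the indifference surfaces, jumps of $\FP$ are tangential to those surfaces (the two pointwise facts you extract from the proof of Lemma \ref{lemma_tangential_jumps}), and the subset of $\partial E$ lying on an indifference surface where $\nu_E$ fails to align with the surface normal is $\calH^{\kappa-1}$-null. Your density-point argument simply makes explicit what the paper asserts in one line when it introduces the set $S:=\{x\in X^*:\ x\in\widehat{\calI}\cap\partial E,\ \nu_{\widehat{\calI}}(x)\not=\nu_{\partial E}(x)\}$ and declares it $\calH^{\kappa-1}$-null; that extra detail is welcome but does not change the approach.

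One reduction step, however, is wrong as stated: you claim that all points lying on two or more indifference surfaces are contained in $Q$ and hence $\calH^{\kappa-1}$-negligible. By the paper's definition, $\tilde{Q}$ contains only those intersection points at which the normals do \emph{not} coincide (plus intersections involving low-dimensional components of a surface); points where two surfaces meet tangentially---in principle even overlapping along a $(\kappa-1)$-dimensional patch---lie in $X^*$ and could carry positive $\calH^{\kappa-1}$-measure on $\partial E$, so your restriction to points on exactly one surface is not justified. The gap is easily repaired, in a way that explains why the paper never makes this case split: at any $x\in X^*$ lying on several surfaces, $x\notin Q$ forces all their normals to be parallel to a common direction $\nu$ (exactly as in the proof of Lemma \ref{lemma_calZ_normal}); each surface $\calI_{i,k,\ell}$ through $x$ annihilates the player-$i$ block of $\nu$, while $f-g$ is supported precisely on the blocks of players indifferent at $x$ (all other $\BR_j$ being locally single-valued near $x$), so $(f-g)\cdot\nu=0$ at these points too, and your alignment argument then applies verbatim with $\nu_{\widehat{\calI}}$ in place of the single-surface normal.
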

\begin{proof}

Suppose $x\in X^*$ is not on any indifference surface $\calI_{i,k,\ell}$. Then $\FP(x)$ maps to a singleton and $f(x) = g(x)$.

Suppose $x\in X^*$ is on an indifference surface $\calI_{i,k,\ell}$. Let $\nu_{\calI}$ denote a normal vector to $\calI_{i,k,\ell}$. Since $x\in X^*$, the vector field $\FP$ can only jump tangentially to $\nu_{\calI}$. Using similar reasoning to the proof of Lemma \ref{lemma_tangential_jumps}, this implies that for any $a,b\in \FP(x)$ we have $a\cdot \nu_{\calI}(x) = b\cdot \nu_{\calI}(x)$. Hence $\FP(x)\cdot \nu:= a\cdot \nu,~a\in \FP(x)$ is well defined for such $x$.

In particular, note that if $x\in X^*$ is on some indifference surface $\calI$ and $\nu_{\calI} = \nu_{E}$ at $x$, then $f(x)\cdot \nu_{E}=\FP(x)\cdot \nu_{\calI}$ for any function $f$ that is a selection of $\FP$.

Let $\widehat{\calI}$ be the union of all indifference surfaces.
Since $\partial E$ is piecewise continuous and the indifference surfaces are smooth, the set
$S:=\{x\in X^*:~ x\in \widehat{\calI}\cap \partial E,~\nu_{\widehat{\calI}}(x) \not= \nu_{\partial E}(x)\}$
has $\calH^{\kappa-1}$-measure zero, where $\nu_{\widehat{\calI}}(x)$ and $\nu_{\partial E}(x)$ denote the normal vectors to $\widehat{\calI}$ and $\partial E$ at $x$.

We have shown that $f\big\vert_{(\partial E)\backslash S} = g\big\vert_{(\partial E)\backslash S}$ for any selections $f,g$ of $\FP$, and $\calH^{\kappa-1}(S)=0$, and hence,
\begin{align*}
\int_{\partial E} f\cdot \nu_E d\calH^{\kappa-1}
= \int_{\partial E} g\cdot \nu_E d\calH^{\kappa-1}
\end{align*}
for any selections $f,g$ of $\FP$.
\end{proof}

The following lemma shows that, within $X^*$, the BR-dynamics vector field compresses mass at a rate of $-1$.
In particular, this implies that, within $X^*$, BR dynamics cannot map a set of positive measure to a set of zero measure in finite time.\footnote{We note that this result can also be derived as a consequence of Lemma 3.1 in \cite{chen-ziemer2009gauss}. For the sake of completeness and to simplify the presentation, we give a proof of the result here using the notation and tools introduced in the paper.}
\begin{lemma} \label{eq_mass_concentration}
Let $E$ be a compact subset of $X^*$ with piecewise smooth boundary and finite perimeter. Then
\begin{equation} 
\int_{\partial E} \FP\cdot\nu_{E}\,d\calH^{\kappa-1} = -\calL^\kappa(E),
\end{equation}
where $\nu_E$ denotes the outer normal vector of $E$.
\end{lemma}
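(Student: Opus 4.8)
The plan is to reduce the statement to the divergence identity already established in Lemma~\ref{lemma_FP_divergence} by means of the Gauss--Green (divergence) theorem, and then to use Lemma~\ref{lemma_div_thrm} to pass from a single selection of $\FP$ back to $\FP$ itself. First I would fix an arbitrary selection $f$ of $\FP$. By Lemma~\ref{lemma_div_thrm} the flux $\int_{\partial E} f\cdot\nu_E\,d\calH^{\kappa-1}$ is independent of the selection and coincides with $\int_{\partial E}\FP\cdot\nu_E\,d\calH^{\kappa-1}$, so it suffices to prove $\int_{\partial E} f\cdot\nu_E\,d\calH^{\kappa-1} = -\calL^\kappa(E)$ for this one $f$. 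The only quantitative input will be Lemma~\ref{lemma_FP_divergence}, which tells us $D\cdot f = -\calL^\kappa$ (Radon--Nikodym derivative $-1$, singular part vanishing).

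Next I would establish the Gauss--Green identity $\int_{\partial E} f\cdot\nu_E\,d\calH^{\kappa-1} = (D\cdot f)(E) = -\calL^\kappa(E)$. Rather than quote the general divergence-measure Gauss--Green theorem (available, e.g., via Lemma~3.1 of \cite{chen-ziemer2009gauss}), I would give a self-contained argument exploiting that $f$ is piecewise linear. Since $f$ is affine off the finite union of indifference surfaces, I would partition $E$ into finitely many pieces $E_1,\dots,E_m$ on each of which $f$ is affine, with the shared internal faces lying on indifference surfaces. Applying the classical divergence theorem on each $E_j$ and using the pointwise divergence value $-1$ from Lemma~\ref{lemma_FP_divergence} gives $\int_{\partial E_j} f\cdot\nu_{E_j}\,d\calH^{\kappa-1} = \int_{E_j}\!\big(D\!\cdot\! f\big)\,d\calL^\kappa = -\calL^\kappa(E_j)$, and summing over $j$ produces $-\calL^\kappa(E)$.

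The crux is then to show that the fluxes over the internal faces cancel, leaving only the contribution from $\partial E$. On a shared face $\Sigma\subset\calI_{i,k,\ell}$ between $E_j$ and $E_{j'}$ the two outward normals are opposite, so the combined contribution is $\int_\Sigma (f^+-f^-)\cdot\nu_\Sigma\,d\calH^{\kappa-1}$, where $\nu_\Sigma$ is the normal to $\Sigma$. Because $E\subset X^*$, the jump of $f$ across any indifference surface is tangential to that surface---this is precisely the mechanism underlying Lemmas~\ref{lemma_tangential_jumps} and~\ref{lemma_div_thrm}, where the component of $\nu_\Sigma$ in the $(i,k)$ direction vanishes and, off $\calZ\cup Q$, $f^+-f^-$ is orthogonal to $\nu_\Sigma$. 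Hence $(f^+-f^-)\cdot\nu_\Sigma = 0$ and the internal fluxes cancel $\calH^{\kappa-1}$-a.e., leaving $\sum_j\int_{\partial E_j\cap\partial E} f\cdot\nu_E\,d\calH^{\kappa-1} = \int_{\partial E} f\cdot\nu_E\,d\calH^{\kappa-1}$. Combined with the previous paragraph this yields $\int_{\partial E} f\cdot\nu_E\,d\calH^{\kappa-1} = -\calL^\kappa(E)$, and Lemma~\ref{lemma_div_thrm} then replaces $f$ by $\FP$ to finish.

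The step I expect to be the main obstacle is exactly this tangential-cancellation bookkeeping: one must verify that the internal faces produced by the partition are genuinely portions of single indifference surfaces (using $E\subset X^*\subset Q^c$, so that surface intersections are avoided), that wherever $\partial E$ happens to run along an indifference surface the boundary normal agrees with the surface normal for $\calH^{\kappa-1}$-a.e.\ point (as in the null-set argument of Lemma~\ref{lemma_div_thrm}), and that the one-sided traces $f^\pm$ are well defined. Each of these is secured by the restriction to $X^*$ together with the finite-perimeter and piecewise-smoothness hypotheses on $E$, but they rely on the structural facts assembled in Lemmas~\ref{lemma_tangential_jumps}--\ref{lemma_div_thrm}.
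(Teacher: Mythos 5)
Your proof is correct, but it follows a genuinely different route from the paper's. The paper never partitions $E$: it mollifies, taking uniformly bounded $C^1$ approximations $f_n \to f$ a.e., uses the BV integration-by-parts formula \eqref{eq_integration_by_parts} and dominated convergence to show the divergence measures $D\cdot f_n$ converge weakly to $D\cdot f$, applies the classical Gauss--Green theorem to each smooth $f_n$, and passes to the limit on both sides, invoking Lemma \ref{lemma_FP_divergence} to identify $\int_E dD\cdot f = -\calL^\kappa(E)$ and $(D\cdot f)(\partial E)=0$, and Lemma \ref{lemma_div_thrm} at the very end exactly as you do (the paper also footnotes that the result can be read off from Lemma 3.1 of \cite{chen-ziemer2009gauss}). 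Your piecewise-affine decomposition with cancellation of internal fluxes is more elementary and self-contained, and it sidesteps the one delicate step in the mollification argument---justifying $\lim_n \int_{\partial E} f_n\cdot\nu_E\,d\calH^{\kappa-1} = \int_{\partial E} f\cdot\nu_E\,d\calH^{\kappa-1}$ when $\partial E$ runs along a jump surface---by absorbing it into the same $\calH^{\kappa-1}$-null-set reasoning as Lemma \ref{lemma_div_thrm}. In exchange you take on the partition bookkeeping, and two points there need slight repair. First, $X^*$ does \emph{not} exclude all intersections of indifference surfaces: $Q$ contains only intersection points where the normals fail to coincide, so inside $X^*$ two surfaces may still meet tangentially; your cancellation survives this, but the correct justification is through the normal components rather than ``intersections are avoided''---the common normal $\nu_\Sigma$ is normal to every surface through the point, hence its components vanish for each player whose indifference surface is involved, while the jump $f^+ - f^-$ is supported on exactly those components, so $(f^+-f^-)\cdot\nu_\Sigma = 0$ still holds (this is the mechanism in the proofs of Lemmas \ref{lemma_FP_divergence} and \ref{lemma_calZ_normal}). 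Second, you should say why the partition is finite and why each piece $E_j$ admits the classical divergence theorem: there are finitely many indifference surfaces, each a smooth algebraic hypersurface of dimension at most $\kappa-1$ within $X^*$ (Lemma \ref{lemma_thin_indifference_surface}), and $E$ is compact with piecewise smooth boundary and finite perimeter, so each $E_j$ is a finite-perimeter set on whose closure the affine piece of $f$ extends smoothly. With those two clarifications your argument is complete and yields the stated identity with the divergence value $-1$ supplied by Lemma \ref{lemma_FP_divergence}.
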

\begin{proof}
We first note that by Lemma \ref{lemma_FP_divergence} for every selection $f$ of $\FP$ we have $\int_E dD\cdot f = -\calL^{\kappa}(E)$.

Let $(f_n)_{n\geq 1}$, $f_n:X^*\rightarrow X^*$ be a sequence of uniformly bounded $C^1$ functions such that $f_n\rightarrow f$ a.e. for some function $f:X^*\rightarrow X^*$ satisfying $f(x)\in \FP(x)$ for all $x\in X^*$. (Such a sequence can be explicitly constructed by smoothing the BR-dynamics vector field, e.g., \cite{Fud92}.)

Let $f$ and each $f_n$ be written componentwise as $f = (f^s)_{s=1}^\kappa$ and $f_n = (f_n^s)_{s=1}^\kappa$. Let $D\cdot f_n = \sum_{s=1}^\kappa D_{s} f_n^{s}$ be the divergence measure associated with $f_n$ and $D\cdot f$ $= \sum_{s=1}^\kappa D_{s} f^{s}$ the divergence measure associated with $f$.
Since $f$ and $f_n$ are BV functions, by \eqref{eq_integration_by_parts} we have
$$
-\int_{X^*} f_n^s \frac{\partial \phi}{\partial x_s}\,dx = \int_{X^*} \phi \,D_s f_n^s, ~~\quad \mbox{and} ~~\quad
-\int_{X^*} f^s \frac{\partial \phi}{\partial x_s}\,dx = \int_{X^*} \phi \,D_s f^s
$$
for $n\in \N$, $s=1,\ldots,\kappa$, for any $\phi\in C^1_c(X^*)$.

For a function $\phi\in C^1_c(X^*)$, there exists a constant $c>0$ such that $|\frac{\partial \phi(x)}{\partial x_s}| < c$ for all $x\in X^*$. Since $(f_n)_{n\geq 1}$ is uniformly bounded, $|f_n(x) \frac{\partial \phi(x)}{\partial x_s}|$ is bounded by some constant $c>0$ for all $x\in X^*$,
and since $X^*$ is a bounded set, the constant function $c\rchi_{X^*}$ (which dominates $|f_n \frac{\partial \phi}{\partial x_s}|$ on $X^*$) is integrable.
Noting that $f_n \frac{\partial \phi}{\partial x_s} \rightarrow f\frac{\partial \phi}{\partial x_s}$ pointwise, the dominated convergence theorem gives
\begin{align}
\lim_{n\rightarrow \infty} \int_{X^*} \phi \,D_s f_n^s =  -\lim_{n\rightarrow \infty} \int_{X^*} f_n^s \frac{\partial \phi}{\partial x_s}\dx = -\int_{X^*} f^s \frac{\partial \phi}{\partial x_s}\,dx = \int_{X^*} \phi \,D_s f^s.
\end{align}
for $n\in \N$, $s=1,\ldots,\kappa$.
This implies that the sequence of measures $(D\cdot f_n)_{n\geq 1}$ converges weakly to $D\cdot f$ in the sense that for any $\phi\in C^1_c(X^*)$ there holds $\lim\limits_{n\rightarrow\infty} \int_{X^*} \phi \, dD\cdot f_n = \int_{X^*} \phi \, dD\cdot f$.
Letting $\phi$ approximate the characteristic function $\rchi_{E}$, and noting that by Lemma \ref{lemma_FP_divergence} we have $(D\cdot f)(\partial E) = 0$, we see that
$\lim_{n\rightarrow \infty} \int_{E} dD\cdot f_n = \int_{E} dD\cdot f$.
Hence,
\begin{align*}
-\calL^{\kappa}(E) & = \int_{E} dD\cdot f\\
& = \lim_{n\rightarrow \infty} \int_{E} dD\cdot f_n\\
& = \lim_{n\rightarrow \infty} \int_{\partial E} f_n\cdot \nu_E d\calH^{\kappa-1}\\
& = \int_{\partial E} f\cdot \nu_E d\calH^{\kappa-1}\\
& = \int_{\partial E} \FP\cdot \nu_E d\calH^{\kappa-1},
\end{align*}
where the third line follows from the Gauss-Green theorem \cite{EvansGariepy}, the fourth line follows from the dominated convergence theorem (by assumption, $E$ has finite perimeter and a piecewise smooth boundary, and $f$ is bounded), and the fifth line follows from Lemma \ref{lemma_div_thrm}.
\end{proof}
We now prove Proposition \ref{prop_finite_time_convergence}.
\begin{proof}
We begin by noting that, by Lemma \ref{lemma_clQ_dim} in the appendix, $\cl Q$, has Hausdorff dimension at most $\kappa-2$.

Let $\epsilon>0$. By the definition of the Hausdorff measure (\hspace{-.004cm}\cite{EvansGariepy}, Chapter 2), there exists a countable collection of balls $(B_\epsilon^j)_{j\geq 1}$, each with diameter less than $\epsilon$, such that $ \cl Q\cup\partial \calZ  \subset \bigcup_{j\geq 1} B_\epsilon^j$ and $\sum_{j=1}^\infty c\left(\frac{\diam B_\epsilon^j}{2}\right)^{\kappa-2} < 2\calH^{\kappa-2}(\cl Q\cup\partial\calZ)$, where $c:=\frac{\pi^{\kappa-2}}{\Gamma(\frac{\kappa-2}{2})+1}$, and where $\Gamma$ in this context denotes the standard $\Gamma$ function.

Since $\partial \calZ$ is closed, $\cl Q\cup \partial Z$ is closed, and hence there exists a finite subcover $(B_\epsilon^j)_{j=1}^{N_\epsilon}$ such that $ \cl Q\cup\partial \calZ \subset \bigcup_{j=1}^{N_\epsilon} B_\epsilon^j $. Let $B_\epsilon := \bigcup_{j=1}^{N_\epsilon} B_\epsilon^j$, and let
$$
X^*_\epsilon := X\backslash \left(B_\epsilon \cup \calZ\right).
$$
Note that, by Lemma \ref{lemma_ball_boundary} in the appendix we have
\begin{equation} \label{eq_boundary_shrinks}
\lim_{\epsilon \to 0} \calH^{\kappa-1}(\partial B_\epsilon) = 0.
\end{equation}

Fix some time $T> 0$, and for $0< t\leq T$, let
\begin{align*}
E_\epsilon(T-t) := \{x_0\in X^*_\epsilon:~\vx(0) = x_0,~ & \vx(t) \mbox{ is a BR process},\\
&\vx(s)\in B_\epsilon, \mbox{ for some }0<s\leq t\}
\end{align*}
and note that the boundary $\partial E_\epsilon(T-t)$ is piecewise smooth. The set $E_\epsilon(T-t)$ may be thought of as the set obtained by tracing paths backwards out of $B_\epsilon$ from time $T$ back to time $T-t$. Let
$$
V_\epsilon(t):= \calL^\kappa(E_\epsilon(T-t)).
$$

Letting $R_\epsilon$ denote the flux through $\partial B_\epsilon$ into $E_\epsilon(T-t)$ and again letting $\nu$ denote the outer normal to $\partial E_\epsilon(T-t)$, for $t>0$ we have
\begin{align} \label{prop_finite_time_eq1}
\frac{d}{dt} V_\epsilon(t) & = \int_{\partial E_\epsilon(T-t)\backslash \partial B_\epsilon } -\FP\cdot\nu \dx \\
\nonumber & \leq R_\epsilon + \int_{\partial E_\epsilon(T-t)}-\FP\cdot\nu \dx \\
\nonumber & \leq R_\epsilon + \calL^\kappa(E_\epsilon(T-t))\\
\nonumber & = R_\epsilon + V_\epsilon(t),
\end{align}
where the third line follows by Lemma \ref{eq_mass_concentration}.

Noting that $\|\FP\|_{\infty} < \infty$, the flux through $\partial B_\epsilon$ is bounded by
\begin{equation}\label{prop_finite_time_eq2}
R_\epsilon \leq \calH^{\kappa-1}(\partial B_\epsilon)\|\FP\|_{\infty} =: \bar{R}_\epsilon.
\end{equation}
By \eqref{eq_boundary_shrinks} we have $\calH^{\kappa-1}(\partial B_\epsilon)\rightarrow 0$ as $\epsilon \rightarrow 0$, and hence
$\bar{R}_\epsilon \rightarrow 0 \mbox{ as } \epsilon \rightarrow 0.$

Using the integral form of Gronwall's inequality, \eqref{prop_finite_time_eq1} and \eqref{prop_finite_time_eq2} give $V_\epsilon(t) \leq t\bar{R}_\epsilon e^{t}$, $0<t\leq T$.
In particular, this means that
\begin{equation} \label{eq_gronwall_bound}
\calL^{\kappa}(E_\epsilon(0)) \leq \bar{R}_{\epsilon}e^T,
\end{equation}
where the right hand side goes to zero as $\epsilon \rightarrow 0$.
Sending $\epsilon\rightarrow 0$, we see that the set $W(T) :=\{x_0\in X^*:~\vx(0) = x_0,~ \vx(t) \mbox{ is a BR process},~\vx(s) \in Q\cup\partial\calZ \mbox{ for some } 0< s\leq T \}$ has  $\calL^\kappa$-measure zero.

Since paths may only enter $\calZ$ through the boundary $\partial \calZ$, this means that the set of points in $X$ from which $\calZ$ can be reached within time $T$ is contained in $W(T)\cup\calZ$. Furthermore, the set of points from which $Q\cup\calZ$ can be reached within time $T$ is contained in $W(T)\cup\calZ \cup Q$, which is a $\calL^{\kappa}$-measure zero set. Since this is true for every $T> 0$, we get the desired result.
\end{proof}

\subsection{Uniqueness of Solutions in Potential Games} \label{sec_uniqueness}
Solutions of \eqref{def_FP_autonomous} are known to always exist (see Section \ref{sec_BR_dynamics_def}). However, being a differential inclusion, solutions of \eqref{def_FP_autonomous} may not always be unique (see Section \ref{sec_example}).
In this Section we show that, although not always unique, solutions of \eqref{def_FP_autonomous} are \emph{almost always} unique in potential games (i.e., we prove Proposition \ref{prop_uniqueness}).


This issue can be readily addressed using the arguments above.
Note the following:
\begin{itemize}
\item The proof of Lemma \ref{lemma_unique_flow} shows that solution curves with initial conditions in $X^*$ are  unique so long as they remain in $X^*$.
\item The proof of Proposition \ref{prop_finite_time_convergence} shows that the set $Q\cup\calZ$ (or equivalently, the set $X\backslash X^*$) can only be reached in finite time from a $\calL^\kappa$-measure zero subset of initial conditions in $X^*$.
\end{itemize}
Since $X^* := X\backslash (Q\cup\calZ)$, this implies that for almost every initial condition in $X^*$, solutions remain in $X^*$ for all $t\geq 0$ and such solutions are unique for all $t\geq 0$. Since $\calL^\kappa(X\backslash X^*)=0$, we see that for almost every initial condition in $X$ there exists a unique solution of \eqref{def_FP_autonomous} and the solution is defined for all $t\geq 0$.
Recalling that we have assumed throughout the section that the game $\Gamma$ is regular, this proves Proposition \ref{prop_uniqueness}.

\begin{remark} [Viewing \eqref{def_FP_autonomous} as a differential equation] \label{remark_BR_diff_equation}
As usual, suppose a potential game is regular. The proof of Lemma \ref{lemma_unique_flow} shows that if a solution curve $\vx$ resides in $X^*$ over some time interval $[0,T]$ then $\BR(\vx(t)) - \vx(t)$ is single-valued for a.e. $t\in [0,T]$. (The map $\BR(x)$ is single valued except for $x$ on indifference surfaces. But the proof of Lemma \ref{lemma_unique_flow} shows that, while in $X^*$, any solution $\vx$ crosses all indifference surfaces instantly.) Furthermore, as discussed above, the proof of Proposition \ref{prop_finite_time_convergence} implies that for a.e. initial condition in $X^*$ (and hence, a.e. initial condition in $X$) solutions remain in $X^*$ for all $t\geq 0$. Thus, for a.e. initial condition in $X$, the vector field $\BR(\vx(t)) - \vx(t)$ is single valued along the solution curve $\vx(t)$ for a.e. $t\geq 0$. This justifies the remark in the introduction that, in potential games, it is relatively safe to think of \eqref{def_FP_autonomous} as a differential equation (with discontinuous right-hand side).
\end{remark}

\subsection{Infinite-Time Convergence} \label{sec_infinite_time_conv}
%

The following proposition shows that it is not possible to converge to $\Lambda(x^*)$ in infinite time.
\begin{proposition}\label{prop_infinite_time_convergence}
Let $\Gamma$ be a regular potential game and let $x^*$ be a mixed-strategy equilibrium. Suppose $(\vx(t))_{t\geq 0}$ is a BR process and $\vx(t)\rightarrow x^*$. Then $\vx(t)$ converges to $\Lambda(x^*)$ in finite time.
\end{proposition}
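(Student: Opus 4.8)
The plan is to combine the two potential-production inequalities \eqref{equation_pot_inequality1_1} and \eqref{equation_pot_inequality2_1} into a single scalar differential inequality that forces finite-time extinction. I first record the equivalence linking the conclusion to the projected dynamics: unwinding the definition of the projection in \eqref{def_calPtilde_map}--\eqref{def_calP_map}, one checks that $d(\calP(x),\calP(x^*)) = 0$ exactly when $x_m = g(x_p)$ (incompletely mixed case) or $x = x^*$ (completely mixed case), so that in either case $d(\calP(x),\calP(x^*)) = 0 \iff x \in \Lambda(x^*)$. Hence it suffices to show $d(\calP(\vx(t)),\calP(x^*)) = 0$ at some finite time.

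Since $\vx(t)\to x^*$, after a time shift I may assume both inequalities hold for all $t\ge 0$. Define $V(t) := \tilde U(\calP(x^*)) - \tilde U(\calP(\vx(t)))$, which is absolutely continuous because $\vx$ is absolutely continuous and $\tilde U\circ\calP$ is smooth. Inequality \eqref{equation_pot_inequality2_1} reads $\dot V(t)\le -c_2\, d(\calP(\vx(t)),\calP(x^*))\le 0$, so $V$ is non-increasing, while continuity of $\tilde U$ and $\calP$ together with $\vx(t)\to x^*$ gives $V(t)\to 0$. A non-increasing function tending to $0$ is non-negative, so $V(t)\ge 0$ throughout; this sidesteps any need to analyze the local shape of $\tilde U$ at $\calP(x^*)$.

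Combining the two inequalities, \eqref{equation_pot_inequality1_1} with $V\ge 0$ yields $d(\calP(\vx(t)),\calP(x^*))\ge \sqrt{V(t)/c_1}$, and substituting into \eqref{equation_pot_inequality2_1} gives $\dot V(t)\le -c_3\sqrt{V(t)}$ with $c_3 := c_2/\sqrt{c_1}>0$. While $V>0$ this is equivalent to $\frac{d}{dt}\sqrt{V(t)}\le -c_3/2$, so $\sqrt{V(t)}\le \sqrt{V(0)}-(c_3/2)t$ and $V$ must vanish at some finite $t_0\le 2\sqrt{V(0)}/c_3$; this is the quantitative form of the Markov-type argument sketched in Section \ref{sec_proof_strategy}. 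To pass from $V(t_0)=0$ to the desired conclusion, observe that since $V$ is non-increasing and non-negative, $V\equiv 0$ on $[t_0,\infty)$, hence $\dot V = 0$ almost everywhere there; feeding this back into \eqref{equation_pot_inequality2_1} forces $d(\calP(\vx(t)),\calP(x^*)) = 0$ for a.e. $t\ge t_0$, and by continuity of $t\mapsto d(\calP(\vx(t)),\calP(x^*))$ for all such $t$. By the equivalence in the first paragraph, $\vx(t_0)\in\Lambda(x^*)$, so the process reaches $\Lambda(x^*)$ in finite time.

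The main obstacle I anticipate is the rigorous handling of the differential manipulations. As noted in the footnote following \eqref{equation_pot_inequality2_1}, the inequalities hold only in an integrated sense, so the passage to $\dot V\le -c_3\sqrt V$ and its integration must be justified through the absolute continuity of $V$ and the chain rule for compositions of smooth maps with absolutely continuous curves, rather than by naive pointwise differentiation; some care is also needed near times where $V(t)=0$ and $\sqrt V$ is non-smooth, though the standard finite-time-extinction bound circumvents this. The only remaining point requiring attention is the clean verification of $d(\calP(x),\calP(x^*))=0\iff x\in\Lambda(x^*)$ directly from the definition of $\calP$.
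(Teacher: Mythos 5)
Your proof is correct, and while it rests on the same two ingredients as the paper---the inequalities \eqref{equation_pot_inequality1_1}--\eqref{equation_pot_inequality2_1} and the identification $\calP(x)=\calP(x^*)\iff x\in\Lambda(x^*)$ (which the paper also asserts ``by construction'')---it combines them by a genuinely different mechanism. The paper integrates \eqref{equation_pot_inequality2_1} over $[t,\infty)$ to get $c\,e^2(t)\geq\int_t^\infty e(s)\,ds$ with $e(t)=d(\calP(\vx(t)),\calP(x^*))$, then bounds the total time with $e>0$ by applying Markov's inequality to the dyadic shells $\{\eta\geq e>\eta/2\}$ and summing, obtaining $\calL^1(\{s:e(s)>0\})\leq 4ce(0)$; you instead run a Lyapunov-gap argument, turning the two inequalities into the scalar differential inequality $\dot V\leq -c_3\sqrt{V}$ for $V(t)=\tilde U(\calP(x^*))-\tilde U(\calP(\vx(t)))$ and invoking finite-time extinction, which yields the explicit arrival bound $t_0\leq 2\sqrt{V(0)}/c_3$. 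Your use of the a.e.-pointwise form of \eqref{equation_pot_inequality2_1} is legitimate: since $\vx$ is Lipschitz and $\tilde U\circ\calP$ is smooth near $x^*$, $V$ is Lipschitz, and the paper's integrated-sense reading over all subintervals is equivalent to the pointwise inequality a.e., exactly the care you flag. The more substantive divergence is the closing step: the paper shows the trajectory \emph{stays} in $\Lambda(x^*)$ via monotonicity of $\tilde U\circ\calP\circ\vx$ together with the claim that $\tilde U(\tilde x)\neq\tilde U(\calP(x^*))$ on a punctured ball around $\calP(x^*)$ (leaning on second-order non-degeneracy), whereas your route---$V$ non-increasing, non-negative, and zero at $t_0$ forces $V\equiv 0$ on $[t_0,\infty)$, hence $\dot V=0$ a.e.\ there, hence $e\equiv 0$ by \eqref{equation_pot_inequality2_1} and continuity---extracts persistence directly from the differential inequality and needs no local level-set analysis of $\tilde U$ at what is generically a saddle. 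That is a cleaner and arguably more robust ending; what the paper's shell-counting argument buys in exchange is a bound on the total sojourn time away from $\Lambda(x^*)$ obtained verbatim from the integrated inequalities, with no chain-rule or $\sqrt{V}$-regularity bookkeeping at the zero set.
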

\begin{proof}
Without loss of generality, assume that for all $t\geq 0$, $\vx(t)$ is sufficiently close to $x^*$ so that \eqref{equation_pot_inequality1_1} and \eqref{equation_pot_inequality2_1} hold.
From the definitions of $\Lambda(x^*)$ and $\calP$ we see that
\begin{equation}
\vx(t) \rightarrow \Lambda(x^*) \quad \quad \iff \quad \quad \calP(\vx(t)) \rightarrow \calP(x^*).
\end{equation}
If we integrate \eqref{equation_pot_inequality2_1}, use the fact $\calP(\vx(t))$ \hspace{-4pt} $\to$ \hspace{-4pt}  $\calP(x^*)$, and set
$e(t) := d(\calP(\vx(t)),\calP(x^*))$, then we find that
\begin{equation} \label{eq_inf_time_pf_eq1}
\tilde U(\calP(x^*)) - \tilde U(\calP(\vx(t))) \geq c_2\int_{t}^{\infty} e(s)ds.
\end{equation}
Using \eqref{equation_pot_inequality1_1} above we get
\begin{equation} \label{eq_inf_time_pf_eq2}
c e^2(t) \geq  \int_t^\infty e(s) \,ds,
\end{equation}
with $c = c_1/c_2$.
Let $\eta>0$ and suppose that for some time $t$ we have $e(t) \leq \eta$.
Using Markov's inequality and applying \eqref{eq_inf_time_pf_eq2} we can bound the time spent in a ``shell'' near $\Lambda(x^*)$ as
\begin{align*}
\calL^1\left(\{s: \eta \geq e(s) > \eta/2 \} \right) & \leq \frac{2}{\eta} \int_t^\infty e(s)\ds\\
& \leq \frac{2}{\eta} c e^2(t)\\
& \leq 2c\eta.
\end{align*}
Without loss of generality, assume that $e(0) \geq e(t)$ for $t\geq 0$. Repeatedly applying the above inequality we get
\begin{align*}
\calL^1\left(\{s: e(s) > 0, ~s\geq 0\} \right) & = \sum_{k\geq 0} \calL^1\left(\big\{s: \frac{e(0)}{2^k} \geq e(s) > \frac{e(0)}{2^{k+1}}  \big\} \right)\\
& \leq \sum_{k\geq 0} 2c \frac{e(0)}{2^k}\\
& \leq 4ce(0).
\end{align*}
%
%
Thus if $\calP(\vx(t))$ converges to $\calP(x^*)$, it must reach it for the first time in finite time.

By construction $\calP(x) = \calP(x^*)$ if and only if $x\in \Lambda(x^*)$. Hence, if $\vx(t)$ converges to $\Lambda(x^*)$ it must reach it for the first time in finite time.

By \eqref{equation_pot_inequality2_1} we have $\frac{d}{dt}\tilde U(\calP(\vx(t))) \geq 0$ in a neighborhood of $\calP(x^*)$. Since $\Gamma$ is non-degenerate, the Hessian of $\tilde U$ is invertible at $\calP(x^*)$, and for all $\tilde x\in \tilde X$ in a punctured ball around $\calP(x^*)$ we have $\tilde U(\tilde x) \not= \tilde U(\calP(x^*))$.
Thus, if $\vx(t) \rightarrow x^*$ and $\calP(\vx(T)) = \calP(x^*)$ (i.e., $\vx(T) \in \Lambda(x^*)$) for some $T\geq 0$, then we must have $\calP(\vx(t)) = \calP(x^*)$ (i.e., $x(t) \in \Lambda(x^*)$) for all $t\geq T$.
Contrariwise, we would have $\tilde U(\calP(x^*)) =  \tilde U(\calP(\vx(T))) < \lim_{s\rightarrow \infty } \tilde U(\calP(\vx(s))) = \tilde U(\calP(x^*))$, which is a contradiction.
\end{proof}

\section{Convergence Rate Bound}\label{sec_conv_rate}
In this section we will prove Theorem \ref{thrm_conv_rate} as a simple consequence of Theorem \ref{thrm_main_result}.  More precisely, we will prove the following proposition which implies Theorem \ref{thrm_conv_rate}.
\begin{proposition} \label{prop_FP_conv_rate1}
Let $\Gamma$ be a regular potential game. Then:\\
\noindent (i) For \emph{almost every} initial condition $x_0\in X$, there exists a constant $c=c(\Gamma,x_0)$ such that if $\vx$ is a BR process associated with $\Gamma$ and $\vx(0) = x_0$, then
\begin{equation} \label{eq_FP_conv_rate1}
d(\vx(t),NE) \leq ce^{- t}.
\end{equation}
\noindent (ii) For every BR process $\vx$, there exists a constant $c=c(\Gamma,\vx)$ such that \eqref{eq_FP_conv_rate1} holds.
\end{proposition}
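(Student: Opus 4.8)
The plan is to reduce the entire statement to a local estimate near the limit of a trajectory. First I would record two facts available in any regular potential game: every BR process converges to the set of NE (\cite{benaim2005stochastic}, Theorem 5.5), and by Theorem 2 of \cite{swenson2017regular} this set is \emph{finite}, hence isolated. Since a BR process is absolutely continuous and converges to an isolated finite set, it cannot oscillate between distinct equilibria, so it must in fact converge to a \emph{single} equilibrium $x^*$. It therefore suffices to prove the following local claim: for any process with $\vx(t)\to x^*$ there exist $C>0$ and $t_0\geq 0$ with $d(\vx(t),x^*)\leq Ce^{-t}$ for all $t\geq t_0$. Granting this, since $d(\vx(t),NE)\leq d(\vx(t),x^*)$ and $X$ is bounded, I would absorb the finite window $[0,t_0]$ by taking $c=\max\{C,\diam(X)\,e^{t_0}\}$, which yields \eqref{eq_FP_conv_rate1} for all $t\geq 0$ and proves part (ii). Part (i) then follows at once, because by Theorem \ref{thrm_main_result} almost every initial condition produces a process converging to a (pure) equilibrium, to which the pure case below applies.

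The local claim splits according to whether $x^*$ is pure or mixed. If $x^*$ is a \emph{pure} equilibrium, then in a regular potential game it is strict \cite{van1991stability}, so by continuity of $\nabla U$ there is a neighborhood of $x^*$ on which $\BR\equiv\{x^*\}$. On this neighborhood \eqref{def_FP_autonomous} reduces to the linear equation $\dot\vx=x^*-\vx$, whose solutions obey $\|\vx(t)-x^*\|=\|\vx(t_0)-x^*\|\,e^{-(t-t_0)}$ once the trajectory has entered the neighborhood, which happens at some finite $t_0$ since $\vx(t)\to x^*$. This gives the claim (with decay rate exactly one) and hence settles part (i) together with the pure case of part (ii).

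If $x^*$ is a \emph{mixed} equilibrium I would use the decomposition $x=(x_p,x_m)$ of Section \ref{sec_diff_ineq_prelims}. For the pure components, the argument establishing consequence (ii) in the proof of Lemma \ref{lemma_IR3}, which relies on first-order non-degeneracy (Remark \ref{remark_first_order_degeneracy}), shows that $\dot{x}_i^k=-x_i^k$ for every pure index $k\geq\gamma_i$ once $\vx(t)$ lies near $x^*$; as $x_p^*=0$ this gives $\|x_p(t)\|\leq\|x_p(t_0)\|\,e^{-(t-t_0)}$. For the mixed components I would invoke Proposition \ref{prop_infinite_time_convergence}: since $\vx(t)\to x^*$, the trajectory reaches $\Lambda(x^*)$ in finite time and remains there. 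In the completely mixed case $\Lambda(x^*)=\{x^*\}$, so $\vx(t)=x^*$ for all large $t$ and the estimate is trivial; in the incompletely mixed case $\Lambda(x^*)=\Gr(g)$, so $x_m(t)=g(x_p(t))$ for all large $t$. Because $g$ is $C^1$ (hence Lipschitz with some constant $L$ near $x_p^*$) and $g(x_p^*)=x_m^*$, we get $\|x_m(t)-x_m^*\|\leq L\|x_p(t)\|$, and combining the two bounds yields $d(\vx(t),x^*)\leq(1+L)\|x_p(t)\|\leq Ce^{-t}$, completing the mixed case.

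The main obstacle is the mixed-equilibrium case, specifically the decay of the mixed components $x_m$. Near a completely mixed equilibrium the BR flow need not contract the mixed directions at all, since the associated critical point of the potential is a saddle and the flow may spiral, so one cannot hope to read off exponential decay of $x_m$ from the dynamics directly. The resolution is that a trajectory which genuinely converges to $x^*$ must, by Proposition \ref{prop_infinite_time_convergence}, collapse onto the stable set $\Lambda(x^*)$ in finite time; thereafter the mixed components are slaved to the exponentially decaying pure components through the smooth map $g$. The two points requiring care are verifying that this slaving is honestly Lipschitz in a neighborhood of $x_p^*$ and that the finite-time-reaching conclusion of Proposition \ref{prop_infinite_time_convergence} applies to the given (possibly non-generic) process rather than only to almost every initial condition.
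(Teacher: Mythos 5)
Your overall architecture matches the paper's: identify the (single) limit equilibrium, use strictness of pure equilibria in regular potential games (Lemma \ref{lemma_strictness}) to reduce to the linear flow $\dot\vx = x^* - \vx$ after a finite entry time, and dispatch mixed limits via the finite-time convergence of Proposition \ref{prop_infinite_time_convergence}. Your preliminary reduction (connected $\omega$-limit set inside the finite NE set forces convergence to a single $x^*$) and your absorption of the initial window into the constant are both fine. In fact, your mixed-equilibrium case is \emph{more} careful than the paper's one-line treatment: for an incompletely mixed limit, reaching $\Lambda(x^*)=\Gr(g)$ in finite time does not mean reaching $x^*$ (points of $\Gr(g)\setminus\{x^*\}$ are not equilibria, since $x_p\neq 0$ puts weight on strictly inferior non-carrier actions), and your slaving argument --- $\dot x_i^k=-x_i^k$ for pure indices via Remark \ref{remark_first_order_degeneracy}, invariance of $\Lambda(x^*)$ from the proof of Proposition \ref{prop_infinite_time_convergence}, and the Lipschitz bound $\|x_m(t)-x_m^*\|\leq L\|x_p(t)\|$ (cf.\ Lemma \ref{lemma_g_derivative_finite}) --- is exactly the completion that the paper's terse ``it does so in finite time'' compresses.

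The genuine gap is in part (i). You claim it ``follows at once'' from Theorem \ref{thrm_main_result} plus the pure case, but the statement demands a single constant $c=c(\Gamma,x_0)$ valid for \emph{every} BR process with $\vx(0)=x_0$. Since \eqref{def_FP_autonomous} is a differential inclusion, nothing in your argument rules out several distinct processes emanating from the same $x_0$; each has its own entry time $\tau(\vx)$ into the region where $\BR\equiv\{x^*\}$ (possibly with different limits $x^*$), and a priori $\sup_{\vx}\tau(\vx)$ could be infinite, so your construction only yields $c=c(\Gamma,\vx)$ --- i.e., the weaker dependence of part (ii). The paper closes this by also invoking Proposition \ref{prop_uniqueness}: intersecting the full-measure set of Theorem \ref{thrm_main_result} with the full-measure set on which the solution from $x_0$ is unique makes the process, hence $\tau$ and $c$, a function of $x_0$ alone. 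The repair is a one-line citation of a result you had available, but as written part (i) is not established.
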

Part (i) of the proposition states that for almost every initial condition $x_0$, the constant $c$ in \eqref{eq_FP_conv_rate1} is uniquely determined by the game $\Gamma$ and the initial condition $x_0$. Part (ii) of the proposition allows one to handle BR processes starting from initial conditions where uniqueness of solutions may fail. In particular, part (ii) shows that if you allow the constant to depend on the solution $\vx$ rather than the initial condition
then the rate of convergence is always (asymptotically) exponential. However, we emphasize that part (ii) makes a somewhat weaker statement than part (i) since the constant $c$ in part (ii) can be made arbitrarily large in any potential game by allowing a solution $\vx$ to rest at a mixed equilibrium for an arbitrary length of time before moving elsewhere.\footnote{Harris (\hspace{-.001em}\cite{harris1998rate}, Conjecture 25) conjectured part (ii) of Proposition \ref{prop_FP_conv_rate1}. Using Theorem \ref{thrm_main_result} and Proposition \ref{prop_uniqueness} we are able to resolve Harris's conjecture and prove the slightly stronger result of part (i) for almost every initial condition.}
\begin{remark}
In the above proposition, it is possible to make the constant $c$ arbitrarily large by bringing the game $\Gamma$ arbitrarily close to the set of irregular potential games. For example, this was done in \cite{brandt2010rate} in order to achieve arbitrarily slow convergence in fictitious play in potential games. In future work we intend to address this issue by studying uniform bounds on the constant $c$ in \eqref{eq_FP_conv_rate1} for all potential games $\Gamma$ with distance at least $\delta>0$ from the set of irregular games.
\end{remark}

In order to prove Proposition \ref{prop_FP_conv_rate1}, we will require the following auxiliary lemma.
\begin{lemma}\label{lemma_strictness}
Let $x^*\in X$ be a pure-strategy equilibrium of a regular potential game. Then for all $x\in X$ in a neighborhood of $x^*$ there holds
$
\BR(x) = \{x^*\};
$
that is, the pure-strategy equilibrium $x^*$ is the unique best response to every $x$ in a neighborhood of $x^*$.
\end{lemma}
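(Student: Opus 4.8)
The plan is to show that a regular pure equilibrium $x^*$ is in fact a \emph{strict} equilibrium, and then to propagate the resulting strict best-response inequalities to a neighborhood by continuity.

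First I would fix the ordering of each action set $Y_i$ so that $y_i^1$ is the action on which $x^*$ places all its mass; since $x^*$ is pure, its carrier is $C_i = \{y_i^1\}$ and hence $\gamma_i = 1$ for every $i$. Under this ordering every coordinate $x_i^k$, $k=1,\ldots,K_i-1$, is a coordinate constrained to zero in the sense of Section \ref{sec_first_order_degeneracy}, so first-order non-degeneracy asserts that $\frac{\partial U(x^*)}{\partial x_i^k}\neq 0$ for all such $(i,k)$. Because $\Gamma$ is regular, Lemma \ref{lemma_non_degen_to_regular} supplies exactly this first-order non-degeneracy (the second-order condition plays no role here, since $\tilde N = 0$ at a pure equilibrium).

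Next I would combine this with the equilibrium condition. Differentiating the expansion \eqref{eq_potential_expanded_form2} gives $\frac{\partial U(x^*)}{\partial x_i^k} = U(y_i^{k+1},x_{-i}^*) - U(y_i^1,x_{-i}^*)$ (cf. \eqref{IR1_eq2}). Since $x^*$ is a Nash equilibrium, $y_i^1\in \BR_i(x_{-i}^*)$, so each of these differences is $\leq 0$; first-order non-degeneracy forces them to be nonzero, hence strictly negative. Thus $U(y_i^1,x_{-i}^*) > U(y_i^k,x_{-i}^*)$ for every $k\neq 1$ and every $i$, i.e., $x^*$ is a strict equilibrium with $y_i^1$ the unique best response of each player $i$ to $x_{-i}^*$. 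I would then pass to a neighborhood: each map $x_{-i}\mapsto U(y_i^k,x_{-i})$ is multilinear, hence continuous, and there are only finitely many of the strict inequalities above, each holding on an open set containing $x^*$. Their finite intersection is an open neighborhood $\mathcal{O}$ of $x^*$ on which $U(y_i^1,x_{-i}) > U(y_i^k,x_{-i})$ for all $k\neq 1$ and all $i$. For $x\in\mathcal{O}$ the unique maximizer of $U(\cdot,x_{-i})$ over $\Delta_i$ is therefore the vertex $y_i^1 = y_i^*$, so $\BR_i(x_{-i}) = \{x_i^*\}$ for each $i$ and hence $\BR(x) = \{x^*\}$.

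I expect the only genuinely substantive step to be establishing strictness at the equilibrium itself, that is, the passage from first-order non-degeneracy to a strict inequality; everything after that is a routine continuity argument over finitely many polynomial functions. The identification of first-order non-degeneracy with the condition $\carr_i(x_i^*) = \BR_i(x_{-i}^*)$ recorded in Remark \ref{remark_first_order_equivalent_condition} is precisely what makes this step immediate, so I would lean on it to keep the argument short.
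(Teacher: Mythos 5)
Your proof is correct and takes essentially the same route as the paper: the entire content of the lemma is that a pure equilibrium of a regular potential game is \emph{strict}, after which the neighborhood claim follows from continuity of the finitely many multilinear differences $x_{-i}\mapsto U(y_i^1,x_{-i})-U(y_i^k,x_{-i})$. The only cosmetic difference is that the paper obtains strictness by citing that regular equilibria are quasi-strict (and pure quasi-strict equilibria are strict), while you derive it in-house from first-order non-degeneracy via Lemma \ref{lemma_non_degen_to_regular} and the sign argument on $\frac{\partial U(x^*)}{\partial x_i^k}$ --- but by Remark \ref{remark_first_order_equivalent_condition} these are the same fact, so the substance coincides.
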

This lemma follows readily from the observation that in regular potential games, all pure NE are strict.\footnote{Every regular equilibrium is quasi-strict \cite{van1991stability}, and a pure-strategy equilibrium is quasi-strict if and only if it is strict. Hence, in regular potential games, all pure NE are strict.}
We will now prove Proposition \ref{prop_FP_conv_rate1}.
\begin{proof}
Theorem \ref{thrm_main_result} and Proposition \ref{prop_uniqueness} imply that
there exists a set $\Omega \subset X$ satisfying the following properties: (a) $\calL^\kappa(X\backslash \Omega)=0$, (b) for every BR process $\vx$ with initial condition $x_0\in\Omega$, $\vx$ is the unique BR process satisfying $\vx(0) = x_0$, and $\vx$ converges to a pure-strategy NE.

Let $x_0\in \Omega$, let $\vx$ be a BR process with $\vx(0) = x_0$, and let $x^*$ be the pure-strategy NE to which $\vx$ converges.
Without loss of generality, assume that the pure-strategy set $Y$ is reordered so that
\begin{equation}\label{eq_x_is_zero}
x^*=0;
\end{equation}
(i.e., $T_i^1(x^*_i) = 1$ for all $i=1,\ldots,N$, where $T_i^k$ is defined as in Section \ref{sec_prelims}).

By Lemma \ref{lemma_strictness}, for all $x$ in a neighborhood of $x^*$ we have $\BR(x) = x^*$. Since $\vx(t) \rightarrow x^*$, this, along with \eqref{def_FP_autonomous} and \eqref{eq_x_is_zero}, implies that
there exists a time $\tau=\tau(\Gamma,x_0)>0$ such that for all $t\geq \tau$,  we have
$\dot{\vx}(t) = - \vx(t)$.
Hence, for $t\geq \tau$ we have $\|\vx(t)\| = \|\vx(\tau)\|e^{\tau-t}$.
Letting $c := \sup_{t\in [0,\tau]} \|\vx(t)\|e^{\tau}$ 
we get $\|\vx(t)\| \leq ce^{-t}$ for all $t\geq 0$. This proves part (i) of the proposition.

To prove part (ii) of the proposition, we only need consider initial conditions $x_0\in X\backslash \Omega$. Suppose $\vx(0) = x_0$ and $\vx$ converges to a pure NE. Then using the same reasoning as above, there exists a time $\tau = \tau(\Gamma,\vx)>0$ such that $\dot{\vx}(t) = - \vx(t)$ for all $t\geq \tau$. As before, letting $c := \sup_{t\in [0,\tau]} \|\vx(t)\|e^{\tau}$ we get the desired result. On the other hand, if $\vx$ converges to a mixed equilibrium, then by Proposition \ref{prop_infinite_time_convergence} it does so in finite time. This proves part (ii) of the proposition.
\end{proof}
\begin{remark}
In Examples \ref{example_FP1}, \ref{example2}, and \ref{example3} one observes that from almost every initial condition, solution curves $\vx$ of \eqref{def_FP_autonomous} eventually enter a region where the best response settles on some pure strategy $x^*$; i.e., $\BR(\vx(t)) = x^*$, for all $t\geq T$ for some $T\geq 0$. From here BR dynamics assume the form $\dot \vx(t) = x^* - \vx(t)$, for all $t\geq T$, which is linear, and hence converges at an exponential rate.
\end{remark} 

\section*{Appendix}
\begin{lemma}\label{lemma_two_players_mixing}
Suppose $\Gamma$ is a regular game. At any mixed equilibrium there are at least two players using mixed strategies.
\end{lemma}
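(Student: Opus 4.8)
The plan is to argue by contradiction, exploiting two facts: that a regular equilibrium is second-order non-degenerate (Lemma \ref{lemma_non_degen_to_regular}), and that the potential function $U$ is multilinear. Let $x^*$ be a mixed equilibrium with carrier $C = \carr(x^*)$, write $\gamma_i = |C_i|$, and let $\tilde{N} := |\{i : \gamma_i \geq 2\}|$ denote the number of players using a genuinely mixed strategy. Since $x^*$ is mixed rather than pure, at least one player mixes, so $\tilde{N} \geq 1$. I would suppose, toward a contradiction, that exactly one player mixes, i.e.\ $\tilde{N} = 1$, and (reordering players if necessary) take this to be player $1$, so that $\gamma_1 \geq 2$ and $\gamma_i = 1$ for all $i \geq 2$.

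The crux is a structural observation about the restricted Hessian $\tilde{\vH}(x^*)$ defined in \eqref{def_mixed_hessian}. Because $U$ is multilinear, it is affine in each player's own strategy vector; concretely, fixing any $i$ and inspecting the expansion \eqref{eq_potential_expanded_form2} shows that $U$ is affine in $(x_i^k)_k$ once $x_{-i}$ is held fixed, so all pure own second-derivatives vanish:
$$
\frac{\partial^2 U(x)}{\partial x_i^k \partial x_i^\ell} = 0, \qquad \text{for all } k,\ell .
$$
In block form, this says that the diagonal block of $\tilde{\vH}(x^*)$ indexed by any single player against itself is identically the zero matrix.

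Applying this when $\tilde{N} = 1$, the entire matrix $\tilde{\vH}(x^*)$ consists of precisely this single player-$1$ diagonal block, which is therefore the $(\gamma_1 - 1) \times (\gamma_1 - 1)$ zero matrix. Since $\gamma_1 \geq 2$, its dimension is at least one, so $\tilde{\vH}(x^*)$ is singular. This contradicts second-order non-degeneracy of the regular equilibrium $x^*$, which by Lemma \ref{lemma_non_degen_to_regular} requires $\tilde{\vH}(x^*)$ to be invertible. Hence $\tilde{N} \geq 2$, proving the lemma.

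I do not anticipate any serious obstacle: the result is essentially immediate once one notices that multilinearity forces the own-player blocks of the Hessian to vanish. The only point demanding a little care is the bookkeeping that, when a single player mixes, the full restricted Hessian collapses to that player's (vanishing) diagonal block, so that second-order non-degeneracy fails at once. I would also remark that only second-order non-degeneracy is used here, so the conclusion in fact holds under the formally weaker hypothesis that $x^*$ is a second-order non-degenerate equilibrium.
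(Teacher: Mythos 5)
Your proof is correct and takes essentially the same route as the paper's: both argue by contradiction that if only one player (say player $1$) mixes, then multilinearity of $U$ in each player's own strategy forces the restricted Hessian $\tilde{\vH}(x^*)$ to reduce to player $1$'s own diagonal block, which vanishes and has positive dimension since $\gamma_1 \geq 2$, contradicting the second-order non-degeneracy that regularity guarantees via Lemma \ref{lemma_non_degen_to_regular}. Your closing remark that only second-order non-degeneracy is actually used is accurate, but it matches the paper's argument rather than strengthening it in any way the paper does not already implicitly exploit.
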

\begin{proof}
Suppose that $x^*$ is an equilibrium in which only one player uses a mixed strategy---say, player 1. Let $C_i = \carr_i(x^*)$ and $\gamma_i = |C_i|$. Then the mixed strategy Hessian is given by
$\tilde \vH(x^*) =(\frac{\partial^2 U(x^*)}{\partial x_1^k \partial x_1^\ell})_{k,\ell=1,\ldots,\gamma_i} = 0$, (note the subscripts of 1) where the equality to zero follows since $U$ is linear in $x_1$. But this implies that $x^*$ is a second-order degenerate equilibrium, which contradicts the regularity of $\Gamma$.
\end{proof}

\begin{lemma} \label{lemma_apx_BR_vs_gradient}
Let $x\in X$ and $i=1,\ldots,N$. Assume $Y_i$ is ordered so that $\actionione\in BR_i(x_{-i})$. Then:\\
(i) For $k=1,\ldots,K_i-1$ we have $\frac{\partial U(x)}{\partial x_i^k}\leq 0$.\\
(ii) For $k=1,\ldots,K_i-1$, we have $\actionikone \in BR_i(x_{-i})$ if and only if $\frac{\partial U(x)}{\partial x_i^k}=0$. In particular, combined with (i) this implies that $\actionikone \not\in BR_i(x_{-i}) \iff \frac{\partial U(x)}{\partial x_i^k}<0$.
\end{lemma}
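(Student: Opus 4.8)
The plan is to derive both claims directly from the explicit formula for the partial derivatives of $U$ together with the linearity of $U$ in player $i$'s own strategy; the required reasoning is essentially the same as that already used to obtain \eqref{IR1_eq2} and \eqref{IR1_eq1} in the proof of Lemma \ref{lemma_IR1}.

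First I would differentiate the expanded form \eqref{eq_potential_expanded_form2} of the potential with respect to $x_i^k$, exactly as in \eqref{IR1_eq2}, to obtain the identity
\[
\frac{\partial U(x)}{\partial x_i^k} = U(y_i^{k+1},x_{-i}) - U(y_i^1,x_{-i}), \qquad k = 1,\ldots,K_i-1 .
\]
Thus each partial derivative records the advantage of the pure action $y_i^{k+1}$ over the pure action $y_i^1$ against the fixed opposing profile $x_{-i}$. Next I would invoke the characterization of the best-response set that follows from multilinearity: since $U(\sigma_i,x_{-i}) = \sum_{\ell} \sigma_i^\ell\, U(y_i^\ell,x_{-i})$ is linear in $\sigma_i$ over the simplex $\Delta_i$, its maximizers are precisely the mixed strategies supported on the pure actions achieving $\max_{\ell} U(y_i^\ell,x_{-i})$. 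In particular, a pure action $y_i^j$ lies in $\BR_i(x_{-i})$ if and only if $U(y_i^j,x_{-i}) = \max_{\ell} U(y_i^\ell,x_{-i})$.

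With these two facts in place the two assertions are immediate. For (i), the hypothesis $y_i^1 \in \BR_i(x_{-i})$ means $U(y_i^1,x_{-i}) = \max_{\ell} U(y_i^\ell,x_{-i})$, so $U(y_i^{k+1},x_{-i}) \leq U(y_i^1,x_{-i})$ for every $k$, and the displayed identity gives $\partial U(x)/\partial x_i^k \leq 0$. For (ii), by the same characterization $y_i^{k+1} \in \BR_i(x_{-i})$ holds if and only if $U(y_i^{k+1},x_{-i}) = \max_{\ell} U(y_i^\ell,x_{-i}) = U(y_i^1,x_{-i})$, which by the identity is equivalent to $\partial U(x)/\partial x_i^k = 0$. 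Combining this equivalence with (i) yields the strict form $y_i^{k+1}\notin\BR_i(x_{-i}) \iff \partial U(x)/\partial x_i^k < 0$.

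There is no genuine obstacle in this argument; it is a short computation once the derivative identity is recorded. The only point that warrants care is the justification that the pure best responses coincide exactly with the pure maximizers of $U(\cdot,x_{-i})$, and that the supremum over $\Delta_i$ is attained only on strategies supported on those maximizers. This is a standard consequence of the linearity of $U$ over the simplex and is precisely the reasoning underlying \eqref{IR1_eq1}, so I would simply cite that structure rather than reprove it.
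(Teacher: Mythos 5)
Your proposal is correct and follows essentially the same route as the paper's own proof: differentiate \eqref{eq_potential_expanded_form2} to obtain the identity $\frac{\partial U(x)}{\partial x_i^k} = U(\actionikone,x_{-i}) - U(\actionione,x_{-i})$, then read off (i) from the optimality of $\actionione$ and (ii) from the linear structure of $U$ in $\sigma_i$. The only difference is cosmetic: you spell out the multilinearity-based characterization of pure best responses that the paper leaves implicit when it says (ii) ``follows readily.''
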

\begin{proof}
(i) Differentiating \eqref{eq_potential_expanded_form2} we find that
\begin{align} \label{eq_potential_derivative}
\frac{\partial U(x)}{\partial x_i^{k}} = U(\actionikone,x_{-i}) - U(\actionione,x_{-i}).
\end{align}
(i)  Since $\actionione$ is a best response, we must have $U(\actionione,x_{-i}) \geq U(\actionikone,x_{-i})$ for any $k=1,\ldots,K_i-1$. Hence $\frac{\partial U(x)}{\partial x_i^{k}}\leq 0$.\\
(ii) Follows readily from \eqref{eq_potential_expanded_form2}.
\end{proof}

\begin{lemma} \label{lemma_eq_BR_positive_gradient}
Let $x \in X$. If $\actionik \in BR_i(x_{-i})$ then $\frac{\partial U(x)}{\partial x_i^{k}} \geq 0$.
\end{lemma}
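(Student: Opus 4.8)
The plan is to reduce the statement to the single-coordinate derivative identity already recorded in the proof of Lemma~\ref{lemma_apx_BR_vs_gradient}, combined with the defining maximality property of a best response. First I would differentiate the expanded form of the potential \eqref{eq_potential_expanded_form2} with respect to the free coordinate $x_i^k$, holding $x_{-i}$ fixed. Since the pure-strategy values $U(y_i^{k+1},x_{-i})$ and $U(y_i^1,x_{-i})$ do not depend on $x_i$, this reproduces exactly the identity \eqref{eq_potential_derivative},
\[
\frac{\partial U(x)}{\partial x_i^k} = U(y_i^{k+1},x_{-i}) - U(y_i^1,x_{-i}).
\]
In contrast to Lemma~\ref{lemma_apx_BR_vs_gradient}, the key point I would stress is that this identity holds with \emph{no} assumption on the ordering of $Y_i$; no action is privileged as a best response.

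Next I would invoke the definition of the best-response correspondence $\BR_i$. Because the action indexing the coordinate $x_i^k$ (namely $y_i^{k+1}$ under the chart $T_i$, with $y_i^1$ playing the role of the dependent/baseline action) is assumed to be a best response against $x_{-i}$, its pure payoff $U(y_i^{k+1},x_{-i})$ attains the maximum $\max_{\ell} U(y_i^{\ell},x_{-i})$ over all pure actions of player~$i$. This maximum is in particular no smaller than the payoff $U(y_i^1,x_{-i})$ of the baseline action. Substituting the resulting inequality $U(y_i^{k+1},x_{-i}) \ge U(y_i^1,x_{-i})$ into the identity above yields $\frac{\partial U(x)}{\partial x_i^k}\ge 0$, which is the claim.

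The computation is essentially two lines, so there is no analytic obstacle; the one point demanding care—and the step I would verify most carefully—is the index bookkeeping induced by the chart $T_i$, under which the free coordinate $x_i^k$ governs the weight placed on $y_i^{k+1}$ rather than on $y_i^1$. This shift is precisely what distinguishes the present (ordering-free) statement from Lemma~\ref{lemma_apx_BR_vs_gradient}: there one fixes $y_i^1$ as a best response and concludes that \emph{every} $\partial U/\partial x_i^k$ is nonpositive, whereas here the sign of $\partial U/\partial x_i^k$ is forced to be nonnegative exactly for the coordinate whose associated action realizes the maximal payoff. Keeping this correspondence straight is the only thing that separates a correct proof from an off-by-one error.
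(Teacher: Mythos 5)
Your proof is correct and takes essentially the same route as the paper, whose entire argument is that the claim ``follows readily from \eqref{eq_potential_derivative}''---i.e., exactly the identity-plus-best-response-maximality computation you spell out. Your closing remark about the index bookkeeping is also well taken: under the chart $T_i$ the coordinate $x_i^k$ weights $y_i^{k+1}$, so the lemma's hypothesis must be read (as you do) as saying that the action governing the differentiated coordinate is a best response, which is the intended meaning behind the paper's slightly loose indexing.
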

\begin{proof}
The result follows readily from \eqref{eq_potential_derivative}.
\end{proof}

\begin{lemma} \label{lemma_carrier_vs_gradient}
Suppose $x^*$ is an equilibrium and $y_i^k \in \carr(x^*)$, $k\geq 2$. Then $\frac{\partial U(x^*)}{\partial x_i^k} = 0$.
\end{lemma}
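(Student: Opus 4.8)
The plan is to reduce the claim to a single elementary fact: at a Nash equilibrium, every pure action carried by a player's mixed strategy is itself a best response, and hence all carried actions yield one and the same expected utility against $x^*_{-i}$. Granting this, the conclusion is immediate, because---as the formula obtained by differentiating \eqref{eq_potential_expanded_form2} shows---each partial derivative $\frac{\partial U(x^*)}{\partial x_i^k}$ is exactly a difference of two such expected utilities, and therefore vanishes once both actions involved lie in the carrier.

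First I would record the derivative formula. Differentiating \eqref{eq_potential_expanded_form2}, and noting that $U(y_i^{k+1},x_{-i})$ and $U(y_i^1,x_{-i})$ do not depend on $x_i$ (each fixes a pure action for player $i$), one obtains, exactly as in \eqref{eq_potential_derivative},
$$
\frac{\partial U(x)}{\partial x_i^k} = U(y_i^{k+1},x_{-i}) - U(y_i^1,x_{-i}),
$$
which remains valid on the boundary of $X$ after the usual extension of $U$ to a neighborhood. Thus the gradient component associated with a free coordinate measures the utility gap between the corresponding carried action and the reference action $y_i^1$.

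Second I would establish the support characterization. Since $x^*$ is an equilibrium, $T_i(x_i^*)$ maximizes $U(\cdot,x^*_{-i})$ over $\Delta_i$; by multilinearity this maximum equals $\max_j U(y_i^j,x^*_{-i}) =: V$ and is attained only by placing mass on maximizing vertices. If some action carrying positive weight under $x_i^*$ were not a maximizer, transferring its weight to a maximizer would strictly raise $U$, contradicting optimality. Hence $\carr_i(x_i^*) \subseteq \BR_i(x^*_{-i})$, and every carried action attains the common value $V$; ordering $Y_i$ so that the reference action $y_i^1$ lies in the carrier (possible since the carrier is nonempty) gives $U(y_i^1,x^*_{-i}) = V$ as well. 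Combining with the derivative formula, any carried action satisfies $U(y_i^k,x^*_{-i}) - U(y_i^1,x^*_{-i}) = V - V = 0$, which is the asserted vanishing of the corresponding gradient component; equivalently, one may invoke Lemma \ref{lemma_apx_BR_vs_gradient}(ii) directly once $y_i^k \in \BR_i(x^*_{-i})$ is known. I expect the support characterization to be the only substantive step; the remaining care is purely in tracking the index shift between an action $y_i^k$ and the free coordinate it controls, so that the vanishing is attributed to the correct component of $\nabla_{x_i} U$.
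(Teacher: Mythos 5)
Your proposal is correct and takes essentially the same route as the paper's proof: the paper likewise observes that multilinearity forces every carried action to be a pure-strategy best response to $x^*_{-i}$ and then concludes via Lemma \ref{lemma_apx_BR_vs_gradient}(ii) applied to the derivative formula \eqref{eq_potential_derivative}. The only difference is that you spell out the weight-transfer argument and the ordering convention (the reference action $y_i^1$ lying in $\carr_i(x_i^*)$, as assumed in Section \ref{sec_diff_ineq_prelims}) that the paper's one-line proof leaves implicit, which is a harmless expansion rather than a different approach.
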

\begin{proof}
Since $U$ is multilinear, $y_i^k$ must be a pure-strategy best response to $x_{-i}^*$. The result then follows from Lemma \ref{lemma_apx_BR_vs_gradient}.
\end{proof}

\begin{lemma} \label{lemma_g_derivative_finite}
There exists a $c>0$ such that $|\frac{\partial \tilde \calP_i^k(x)}{\partial x_j^\ell}| < c$, $i=1,\ldots,\tilde{N}$, $k=1,\ldots,\gamma_i-1$, $j=1,\ldots,N$, $\ell\geq \gamma_j$ for $x$ in a neighborhood of $x^*$.
\end{lemma}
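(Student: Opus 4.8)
The plan is to reduce the claimed bound directly to the local boundedness of the derivative of the implicit-function map $g$ from Section \ref{sec_diff_ineq_prelims}. First I would unpack the definition \eqref{def_calPtilde_map} componentwise. Writing $x=(x_p,x_m)$ and recalling that $(x_p,g(x_p))$ carries the original pure component $x_j^\ell$ (with $\ell\geq\gamma_j$) in its pure slots and the value $g_i^k(x_p)$ in its mixed slots, a \emph{mixed} coordinate map of $\tilde\calP$ (i.e.\ $i\leq\tilde{N}$, $k\leq\gamma_i-1$) reads
$$
\tilde\calP_i^k(x) = (x^*)_i^k + x_i^k - g_i^k(x_p).
$$
Differentiating against a pure component $x_j^\ell$ ($\ell\geq\gamma_j$): since $x_i^k$ is a mixed variable and $x_j^\ell$ is a distinct pure variable, $\partial x_i^k/\partial x_j^\ell = 0$, so
$$
\frac{\partial \tilde\calP_i^k(x)}{\partial x_j^\ell} = -\frac{\partial g_i^k(x_p)}{\partial x_j^\ell}.
$$
(The completely mixed case is vacuous: there $\gamma_j=K_j$, so the index range $\gamma_j\leq\ell\leq K_j-1$ is empty.) Thus the quantity to be bounded is, up to sign, exactly a first partial derivative of $g$ with respect to one of its own arguments, since $x_j^\ell$ with $\ell\geq\gamma_j$ is precisely a component of $x_p$.

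Next I would invoke the regularity of $g$. Because $U$ is a multilinear polynomial, each $F_i^k(\cdot,u)=\partial U/\partial x_i^k$ is $C^\infty$ in $x$, so $F(\cdot,u)$ is $C^\infty$. Since $\Gamma$ is non-degenerate, $\vJ(x^*)=\tilde{\vH}(x^*)$ is invertible, and the implicit function theorem furnishes $g$ as a $C^\infty$ (in particular $C^1$) map on the open set $\calD(g)\ni x_p^*$, with
$$
D_{x_p} g(x_p) = -\big[D_{x_m} F(x_p,g(x_p),u)\big]^{-1} D_{x_p} F(x_p,g(x_p),u).
$$
In particular, each $\partial g_i^k/\partial x_j^\ell$ is a continuous function on $\calD(g)$.

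Finally I would choose a compact neighborhood of $x_p^*$ contained in $\calD(g)$. On this compact set each of the finitely many continuous functions $\partial g_i^k/\partial x_j^\ell$ is bounded; taking $c$ larger than the maximum of their absolute values over all relevant index tuples $(i,k,j,\ell)$ (and over the corresponding neighborhood of $x^*$ in $X$) yields the claim. The only real care needed is the index bookkeeping in the first step—keeping straight which slots of $(x_p,g(x_p))$ are mixed versus pure and verifying that differentiating a mixed output coordinate against a pure input coordinate isolates exactly a derivative of $g$. The analytic content is simply that a $C^1$ function has locally bounded derivative, so no genuine estimate is required.
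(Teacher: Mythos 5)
Your proof is correct and follows essentially the same route as the paper's: differentiate \eqref{def_calPtilde_map} to reduce the claim to boundedness of $\partial g_i^k/\partial x_j^\ell$, then use the implicit function theorem (with $D_{x_m}F(x^*) = \tilde{\vH}(x^*)$ invertible by non-degeneracy) and continuity of $D_{x_p}g$ on a neighborhood of $x_p^*$ to obtain the uniform bound. Your added observations---the vacuity of the completely mixed case and the explicit sign in $D_{x_p}g = -[D_{x_m}F]^{-1}D_{x_p}F$ (which the paper's displayed formula drops, harmlessly)---are correct refinements, not deviations.
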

\begin{proof}
Differentiating \eqref{def_calPtilde_map} we see that $\frac{\partial \tilde \calP_i^k(x)}{\partial x_j^\ell} = -\frac{\partial g_i^k(x_p)}{\partial x_j^\ell}$, $i=1,\ldots,\tilde{N}$, $k=1,\ldots,\gamma_i-1$, $j=1,\ldots,N$, $\ell\geq \gamma_j$, $x=(x_p,x_m)$.

By the definition of $g$ we have $F(x_p,g(x_p),u) = 0$ for all $x_p$ in a neighborhood of $x_p^*$. Hence,
\begin{align}
0 & = D_{x_p} F(x_p,g(x_p),u)\\
& = D_{x_p} F(x_p,x_m',u)\big\vert_{x_m' = g(x_p)} + D_{x_m} F(x_p,x_m,u) D_{x_p} g(x_p),
\end{align}
By \eqref{def_mixed_hessian} and \eqref{def_F} we see that $D_{x_m} F(x_p,x_m,u) = \vH(x)$. Since the equilibrium $x^*$ is assumed to be non-degenerate, $\vH(x^*)$ is invertible and the above implies that
$$
D_{x_p} g(x_p^*) = \vH(x^*)^{-1} D_{x_p} F(x_p^*,x_m^*).
$$
Using \eqref{def_F} and the multilinearity of $U$, one may readily verify that $D_{x_p}F(x_p^*,x_m^*,u)$ is entrywise finite. Since $g$ is continuously differentiable, it follows that each entry of
$$
\left(\frac{\partial \tilde \calP_i^k(x)}{\partial x_j^\ell}\right)_{\substack{i=1,\ldots,\tilde{N}, k=1,\ldots,\gamma_i-1\\ j=1,\ldots,N,~\ell\geq \gamma_j}}
= \left(-\frac{\partial g_i^k(x_p)}{\partial x_j^\ell}\right)_{\substack{i=1,\ldots,\tilde{N}, k=1,\ldots,\gamma_i-1\\ j=1,\ldots,N,~\ell\geq \gamma_j}} = -D_{x_p} g(x_p)
$$
is uniformly bounded for $x=(x_p,x_m)$ in a neighborhood of $x^*$.
\end{proof}

\begin{lemma} \label{lemma_gradient_inequality_apx}
Suppose $V:\R^n\rightarrow\R$ is twice differentiable.
Suppose $x^*$ is a critical point of $V$ and the Hessian of $V$ at $x^*$, denoted by $\vH(x^*)$, is invertible. Then there exists a constant $c$ such that $\|\nabla V(x)\| \geq c d(x^*,x)$ for all $x$ in a neighborhood of $x^*$.
\end{lemma}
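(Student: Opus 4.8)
The plan is to derive the bound from the first-order Taylor expansion of the gradient map $\nabla V$ at the critical point $x^*$, using invertibility of the Hessian to convert that expansion into a linear lower bound. First I would record that, since $x^*$ is a critical point, $\nabla V(x^*)=0$. Because $V$ is twice differentiable, the vector field $\nabla V:\R^n\to\R^n$ is differentiable at $x^*$ with Jacobian $\vH(x^*)$, so that
\[
\nabla V(x) = \vH(x^*)(x-x^*) + r(x), \qquad \|r(x)\| = o(\|x-x^*\|).
\]
Note that only differentiability of $\nabla V$ at the single point $x^*$ is needed here, so no continuity of the Hessian is required.

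Next I would exploit invertibility of the Hessian. Setting $\lambda := \|\vH(x^*)^{-1}\|^{-1}>0$, where $\|\cdot\|$ denotes the operator norm (equivalently, $\lambda$ is the smallest singular value of $\vH(x^*)$, which is strictly positive precisely because $\vH(x^*)$ is nonsingular), one has $\|\vH(x^*)v\|\geq \lambda\|v\|$ for every $v\in\R^n$. Applying the reverse triangle inequality to the expansion above then gives
\[
\|\nabla V(x)\| \geq \|\vH(x^*)(x-x^*)\| - \|r(x)\| \geq \lambda\|x-x^*\| - \|r(x)\|.
\]
Since $\|r(x)\| = o(\|x-x^*\|)$, there is a neighborhood of $x^*$ on which $\|r(x)\|\leq \tfrac{\lambda}{2}\|x-x^*\|$, and on that neighborhood
\[
\|\nabla V(x)\| \geq \tfrac{\lambda}{2}\|x-x^*\| = \tfrac{\lambda}{2}\,d(x^*,x).
\]
Taking $c:=\lambda/2$ completes the argument.

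I do not expect any genuine obstacle: the statement is essentially a quantitative form of the lower bound underlying the inverse function theorem. The only points requiring mild care are verifying that twice differentiability of $V$ delivers the first-order expansion of $\nabla V$ at $x^*$ (as noted above, pointwise differentiability suffices), and isolating the constant $\lambda$ from the smallest singular value of the Hessian so that the linear term dominates the remainder once $x$ is sufficiently close to $x^*$.
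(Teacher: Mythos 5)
Your proof is correct, and it takes a genuinely different route from the paper. You argue directly: since $\nabla V$ is (Fr\'echet) differentiable at $x^*$ with $\nabla V(x^*)=0$, the expansion $\nabla V(x)=\vH(x^*)(x-x^*)+o(\|x-x^*\|)$ combined with the lower bound $\|\vH(x^*)v\|\geq\lambda\|v\|$ (where $\lambda$ is the smallest singular value) yields the estimate with the explicit constant $c=\lambda/2$. The paper instead argues by contradiction: assuming the bound fails, it extracts a sequence $x_k=x^*+t_k y_k\to x^*$ with $\|\nabla V(x_k)\|<\tfrac1k\,d(x_k,x^*)$, passes to a convergent subsequence of unit directions $y_{k_j}\to y$, studies $f(t):=V(x^*+ty)$, and concludes $f''(0)=y^T\vH(x^*)y=0$, contradicting invertibility of the Hessian. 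Your approach buys an explicit, quantitatively meaningful constant tied to the spectrum of $\vH(x^*)$, and it makes the regularity bookkeeping transparent---as you note, only differentiability of $\nabla V$ at the single point $x^*$ is used, which is in fact the same hypothesis the paper needs implicitly when it identifies $f''(0)$ with the quadratic form $y^T\vH(x^*)y$. The paper's compactness argument avoids writing down the remainder estimate but produces no constant, and its step transferring the bound at the scattered points $x_k$ to the whole ray $t\mapsto x^*+ty$ ``by continuity of $\nabla V$'' is looser than your self-contained estimate; in that sense your proof is not only different but arguably tighter. One small caution: your argument (like the paper's) requires ``twice differentiable'' to be read in the Fr\'echet sense at $x^*$, i.e.\ $\nabla V$ differentiable there; mere existence of all second-order partial derivatives would not suffice for the expansion, so it is worth keeping that reading explicit, as you do.
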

\begin{proof}
Suppose the claim is false. Then for any $\epsilon > 0$ there exists a sequence $(x_k^\epsilon)_{k\geq 1}\subset B(x^*,\epsilon)$ such that $\|\nabla V(x_k)\| < \frac{1}{k} d(x_k,x^*)$. Let $(x_k)_{k\geq 1}$ be such a sequence that furthermore satisfies $\lim_{k\rightarrow\infty }d(x_k,x^*)=0$. Let $y_k\in\R^n$, $t_k\in\R$ be such that $x_k = x^*+t_k y_k$, $\|y_k\| = 1$.  Since $(y_k)_{k\geq 1}$ is a sequence on the unit sphere in $\R^n$ it has a convergent subsequence; say, $y_{k_j} \rightarrow y$ as $j\rightarrow\infty$.
Let $f:\R\rightarrow\R$ be given by $f(t):= V(x^*+ty)$.

Using the continuity of $\nabla V$ we see that for any $c>0$ we have $|f'(t)| < c t$ for all $t$ sufficiently small. Since $x^*$ is a critical point of $V$ we have $f'(0) = 0$. Hence
$$
f''(0) = \lim_{t\rightarrow 0} \bigg\vert \frac{f'(t) - f'(0)}{t} \bigg\vert = \lim_{t\rightarrow 0} \frac{|f'(t)|}{t} < c.
$$
Letting $c\rightarrow 0$ we see that $f''(0) = 0$. But this means $0=f''(0) = y^T \vH(x^*) y$, implying the Hessian is singular, which is a contradiction.
\end{proof}

The following lemma characterizes the level sets of polynomial functions. Before presenting the lemma we require the following definition.
\begin{definition}
Given a polynomial $p:\R^n\to\R$, $n\geq 1$, let
$$
Z(p) :=\{x\in\R^n:~p(x)=0\}
$$
be the zero-level set of $p$.
\end{definition}
\begin{lemma} \label{lemma_polynomial_level_set}
Let $p(x):\R^n\rightarrow \R$, $n\geq 1$ be a polynomial that is not identically zero. Then $\calL^n(Z(p)) = 0$.
\end{lemma}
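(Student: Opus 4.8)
The plan is to proceed by induction on the dimension $n$, using Tonelli's theorem to reduce the $n$-dimensional statement to the $(n-1)$-dimensional statement together with a one-variable base case. For the base case $n=1$, a polynomial on $\R$ that is not identically zero has at most $\deg p$ real roots, so $Z(p)$ is finite and hence $\calL^1(Z(p)) = 0$.

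For the inductive step I would write $x = (x', x_n)$ with $x' \in \R^{n-1}$, and expand $p$ as a polynomial in the last variable with polynomial coefficients, $p(x', x_n) = \sum_{j=0}^d a_j(x') x_n^j$, where each $a_j$ is a polynomial in $x'$. Since $p\not\equiv 0$, at least one coefficient is not identically zero; let $a_{j_0}$ be the one of highest index, so $a_{j_0}\not\equiv 0$ is a polynomial in $n-1$ variables. Two observations drive the argument: (a) by the inductive hypothesis, $\calL^{n-1}(Z(a_{j_0})) = 0$; and (b) for every fixed $x'$ with $a_{j_0}(x') \neq 0$, the slice $x_n \mapsto p(x', x_n)$ is a single-variable polynomial whose leading coefficient does not vanish, hence it is not identically zero, so by the base case the slice $\{x_n : p(x', x_n) = 0\}$ has $\calL^1$-measure zero.

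To assemble these, I would first note that $Z(p)$ is closed (hence Lebesgue measurable), since $p$ is continuous. Then Tonelli's theorem gives
$$
\calL^n(Z(p)) = \int_{\R^{n-1}} \calL^1\big(\{x_n : p(x', x_n) = 0\}\big)\,dx'.
$$
By observation (b), the integrand vanishes for every $x' \notin Z(a_{j_0})$, so the integral reduces to one over $Z(a_{j_0})$; by observation (a) this is an integral of a nonnegative measurable function over a set of $\calL^{n-1}$-measure zero, which is zero. Therefore $\calL^n(Z(p)) = 0$, completing the induction.

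The only delicate point is the bookkeeping in the final step: on the exceptional set $Z(a_{j_0})$ the one-dimensional slice may have positive or even infinite measure (e.g.\ when the whole vertical line lies in $Z(p)$), so one cannot bound the integrand pointwise. This is harmless, however, because $Z(a_{j_0})$ is itself $\calL^{n-1}$-null and the integrand is nonnegative and measurable, so its integral over that set is zero regardless of the integrand's values there. Everything else---the polynomial expansion, the leading-coefficient argument, and the root count in the base case---is routine.
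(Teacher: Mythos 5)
Your proof is correct and follows essentially the same strategy as the paper's: induction on $n$ with the fundamental theorem of algebra as the base case, expansion of $p$ in powers of $x_n$ with polynomial coefficients, and a Fubini--Tonelli slicing argument. The only (cosmetic) difference is in the bookkeeping: you isolate the highest-index nonvanishing coefficient $a_{j_0}$ so that every slice off the $\calL^{n-1}$-null set $Z(a_{j_0})$ is a nonzero one-variable polynomial, whereas the paper decomposes $Z(p)$ into the set $A$ where \emph{all} coefficients vanish and the set $B$ where $x_n$ is a root of the slice polynomial, handling the two pieces by separate integrations---your variant is marginally tidier since it sidesteps the case where the slice polynomial is identically zero, which you correctly flag and dispose of via nonnegativity of the integrand over a null set.
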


\begin{proof}

We will prove the result using an inductive argument.

Suppose first that $n=1$ so that $p:\R\rightarrow \R$. Let $k$ denote the degree of $p$. Since $p$ is not identically zero, the fundamental theorem of algebra implies that $p$ has at most $k$ zeros. Hence $\calL^1(Z(p)) = 0$.

Now, suppose that $n\geq 2$ and for any polynomial $\tilde{p}:\R^{n-1} \to \R$ there holds $\calL^{n-1}(Z(\tilde{p})) = 0$. We may write
$$
p(x,x_n) = \sum_{j=0}^k p_j(x)x_n^j,
$$
where $k$ is the degree of $p$ in the variable $x_n$, $x=(x_1,\ldots,x_{n-1})$, the functions $p_j$, $j=0,\ldots,k$ are polynomials in $n-1$ variables, and where at least one $p_j$ is not identically zero.

If $(x,x_n)$ is such that $p(x,x_n)=0$ then there are two possibilities: Either (i) $p_0(x) =\ldots = p_k(x) = 0$, or (ii) $x_n$ is the root of the one-variable polynomial $p_x(t) := \sum_{j=1}^k p_j(x)t^j$.

Let $A$ and $B$ be the subsets of $\R^n$ where (i) and (ii) hold respectively, so that $Z(p) = A\cup B$.
For any $x_n\in \R$ we have $(x,x_n) \in A \iff x\in Z(p_j), ~\forall j=1,\ldots,k$. By the induction hypothesis, we have $\calL^{n-1}(Z(p_j)) = 0$ for at least one $j$, and hence $\int_{\R^{n-1}} \rchi_{A}(x,x_n) \dx = 0$ for any $x_n\in \R$, where we include the argument in the characteristic function $\rchi_A$, in order to emphasize the dependence on both $x$ and $x_n$. This implies that $x_n\mapsto \int_{\R^{n-1}} \rchi_{(x,x_n)\in A} \dx$ is a measurable function (it's identically zero) and
$$
\calL^n(A) = \int_{\R} \int_{\R^{n-1}} \rchi_{A}(x,x_n) \dx \,dx_n = 0.
$$
Now, by the fundamental theorem of algebra, for any $x\in \R^{n-1}$ there are at most $k$ values $t\in \R$ such that $(x,t)\in B$, and hence $\int_{\R} \rchi_{B}(x,x_n) \,dx_n = 0$. As before, this implies that $x\mapsto \int_{\R} \rchi_{B}(x,x_n) \,dx_n$ is a measurable function and
$$
\calL^n(B) = \int_{\R^{n-1}}\int_{\R} \rchi_{B}(x,x_n) \,dx_n \dx = 0.
$$
Since $Z(p) = B\cup A$, this proves the desired result.
\end{proof}

\begin{remark} \label{remark_level_sets}
Note that if $p\equiv 0$, then $Z(p) = \R^n$. Thus, in general, if $p:\R^n\to \R$ is a polynomial, then Lemma \ref{lemma_polynomial_level_set} implies that either $Z(p) = \R^n$ or $\calL^n(Z(p)) = 0$.
\end{remark}
\begin{lemma} \label{lemma_thin_indifference_surface}
Suppose $\Gamma$ is a non-degenerate potential game. Then each indifference surface $\calI_{i,k,\ell}$, as defined in \eqref{def_indiff_surface}, is a union of smooth surfaces with Hausdorff dimension at most $\kappa-1$.
\end{lemma}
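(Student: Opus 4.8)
The plan is to exhibit each indifference surface as the zero set of a nontrivial polynomial and then read off its structure from the regular-value theorem together with standard facts about real algebraic sets. Fix $i$ and $k\neq\ell$ and set
\[
p(x) := U(y_i^k,x_{-i}) - U(y_i^\ell,x_{-i}),
\]
so that $\calI_{i,k,\ell} = Z(p)\cap X$, where $Z(p)$ is the zero-level set defined in the appendix. Using \eqref{eq_potential_expanded_form2} and the multilinearity of $U$, the map $p$ is a polynomial in the coordinates of $x$; moreover, since both terms fix player $i$ at a pure action, $p$ depends only on $x_{-i}$ and not on $x_i$. I would record this structural fact first, since it is also what forces any normal vector of $\calI_{i,k,\ell}$ to have vanishing $(i,\cdot)$-components — the property exploited in Lemmas \ref{lemma_tangential_jumps} and \ref{lemma_FP_divergence}.

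The crucial step is to show $p\not\equiv 0$, and I expect this to be the main obstacle, as it is precisely where non-degeneracy is used. A polynomial vanishing on the open set $\mathring{X}$ vanishes identically, so $p\equiv 0$ is equivalent to $\calI_{i,k,\ell}=X$ and to the statement that actions $y_i^k$ and $y_i^\ell$ yield player $i$ identical payoffs against every opponent profile. Such a redundancy is incompatible with non-degeneracy: it destroys the strictness of any pure equilibrium in which player $i$ employs $y_i^k$ or $y_i^\ell$ (recall that pure equilibria of a regular game are strict \cite{van1991stability}) and, more generally, yields non-isolated equilibria, contradicting regularity. I would therefore argue — or invoke the corresponding exclusion built into the definition of a non-degenerate game in \cite{swenson2017regular} — that no two actions are payoff-equivalent, so that $p\not\equiv 0$.

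Granting $p\not\equiv 0$, the set $Z(p)\subsetneq\R^\kappa$ is a proper real algebraic variety. On the open subset where $\nabla p\neq 0$, the implicit function theorem (regular-value theorem) shows that $Z(p)$ is a smooth embedded $(\kappa-1)$-dimensional submanifold. The remaining singular part $\{x\in Z(p):\nabla p(x)=0\}$ is contained in the zero set of some partial derivative $\tfrac{\partial p}{\partial x_s}$ that is not identically zero (such an $s$ exists whenever $p$ is nonconstant), a polynomial of strictly smaller degree; iterating this decomposition, or invoking standard stratification results for real algebraic sets, writes $Z(p)$ as a finite union of smooth manifolds each of dimension at most $\kappa-1$. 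Intersecting with $X$ preserves this structure, so $\calI_{i,k,\ell}$ is a union of smooth surfaces of Hausdorff dimension at most $\kappa-1$. The only nonroutine ingredient here is controlling the singular stratum; the weaker measure-zero consequence serves as a consistency check, since it also follows directly from Lemma \ref{lemma_polynomial_level_set} and Remark \ref{remark_level_sets} once $p\not\equiv 0$ is known.
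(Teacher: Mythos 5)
Your skeleton is the paper's: you form the same polynomial $p(x) = U(y_i^k,x_{-i}) - U(y_i^\ell,x_{-i})$, invoke Lemma \ref{lemma_polynomial_level_set} and Remark \ref{remark_level_sets} to get the dichotomy $\calL^\kappa(Z(p))=0$ or $\calI_{i,k,\ell}=X$, and use non-degeneracy to exclude the second alternative. Your handling of the smooth-decomposition step (regular-value theorem on $\{\nabla p \neq 0\}$ plus stratification of the singular stratum) is actually more explicit than the paper's, which simply asserts that a measure-zero polynomial level set is a union of smooth surfaces of dimension at most $\kappa-1$; and your observation that $p$ is independent of $x_i$ is correct and is indeed the fact exploited later in Lemmas \ref{lemma_tangential_jumps} and \ref{lemma_FP_divergence}.

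The gap is in how you exclude $p\equiv 0$, and you correctly predicted this would be the delicate point. The paper's proof takes an arbitrary equilibrium $x^*$ (one exists since the game is finite) and argues directly: if $x^*$ puts positive weight on one of the two actions, the payoff equivalence forces two identical rows in the Hessian $\tilde{\vH}(x^*)$, i.e., second-order degeneracy; if the weight is zero, it derives first-order degeneracy. Your two mechanisms cover strictly less. The non-isolation argument is sound when some equilibrium assigns positive weight to $y_i^k$ or $y_i^\ell$ (redistributing mass between potential-equivalent actions leaves $U$, hence every player's best-response set, unchanged, yielding a continuum of equilibria), and strictness handles pure equilibria using one of these actions. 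But at a mixed equilibrium where both actions are \emph{unused yet tie with the carrier} (unused best responses), neither mechanism applies: shifting weight from a carrier action onto $y_i^k$ changes $U(\cdot,x_{-i})$ and need not produce equilibria, so no continuum arises. That case is instead handled by first-order degeneracy in the form of Remark \ref{remark_first_order_equivalent_condition}, $\carr_i(x_i^*)\subsetneq \BR_i(x_{-i}^*)$, which your argument never produces. Moreover, your fallback — ``the corresponding exclusion built into the definition of a non-degenerate game in \cite{swenson2017regular}'' — does not exist: by Lemma \ref{lemma_non_degen_to_regular}, non-degeneracy is exactly the conjunction of the first- and second-order conditions at equilibria, with no separate prohibition of payoff-equivalent actions. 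So you must, as the paper does, manufacture a first- or second-order degeneracy at an actual equilibrium. (One caveat affecting the paper's proof as much as yours: if the two payoff-equivalent actions are never best responses at any equilibrium — e.g., duplicated strictly dominated actions — no degeneracy at an equilibrium follows from $p\equiv 0$ at all, so neither argument closes that residual case; but you should at minimum not present ``non-isolated equilibria'' as covering the general mixed case, since it demonstrably does not cover the unused-best-response configuration.)
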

\begin{proof}
Throughout the proof, when we refer to the dimension of a set we mean the Hausdorff dimension.
Let $i\in\{1,\ldots,N\}$, $k,\ell\in\{1,\ldots,K_i\}$, $k\not= \ell$ and let $\calI := \calI_{i,k,\ell}$, where $\calI_{i,k,\ell}$ is as defined in \eqref{def_indiff_surface}. Note that $\calI$ is the zero-level set of the polynomial $p(x) := U(y_i^k,x_{-i}) - U(y_i^\ell,x_{-i})$. By Lemma \ref{lemma_polynomial_level_set} and Remark \ref{remark_level_sets} we see that either $\calL^{\kappa}(\calI)=0$, or
$\calI=X$.
Being the level set of a polynomial, if $\calL^\kappa(\calI) = 0$, then $\calI$ is the union of smooth surfaces with dimension at most $\kappa-1$.

Suppose that $\calI$ has dimension greater than $\kappa-1$. Then by the above, we see that $\calI=X$. Since $\Gamma$ is a finite normal-form game, there exists at least one equilibrium $x^*\in X$. Letting $x^*$ be written componentwise as $x^* = ([x^*]_j^m)_{j=1,\ldots,N,~m=1,\ldots,K_i-1}$ we see that if  $[x^*]_i^k > 0$, then $x^* \in \calI= X$ implies that $x^*$ is a second-order degenerate equilibrium. Otherwise, if $[x^*]_i^k = 0$, then $x^* \in \calI = X$ implies that $x^*$ is a first-order degenerate equilibrium. In either case we see that $x^*$ is a degenerate equilibrium, and hence $\Gamma$ is a degenerate game, which is a contradiction.

Since $\calI$ was an arbitrary indifference surface, we see that if $\Gamma$ is a non-degenerate game, then every indifference surface has dimension at most $\kappa-1$.
\end{proof}

\begin{lemma} \label{lemma_ball_boundary}
Let $B_\epsilon$ be as defined in the proof of Proposition \ref{prop_finite_time_convergence}. Then,
$$
\calH^{\kappa-1}(\partial B_\epsilon) \rightarrow 0 \quad \mbox{ as } \quad \epsilon\to 0.
$$
\end{lemma}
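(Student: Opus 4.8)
The plan is to dominate $\calH^{\kappa-1}(\partial B_\epsilon)$ by the total surface area of the finitely many spheres bounding the balls $B_\epsilon^j$ that make up $B_\epsilon$, and then to exploit the fact that every such ball has radius below $\epsilon/2$ in order to trade one power of the radius for a vanishing factor of $\epsilon$. First I would record the elementary observation that, because $B_\epsilon = \bigcup_{j=1}^{N_\epsilon} B_\epsilon^j$ is a \emph{finite} union of balls, one has $\partial B_\epsilon \subseteq \bigcup_{j=1}^{N_\epsilon} \partial B_\epsilon^j$ (any boundary point of the union must lie on the boundary of one of the finitely many balls). Writing $r_j := \diam B_\epsilon^j / 2$ for the radius of the $j$-th ball, subadditivity and monotonicity of Hausdorff measure then give
\begin{equation*}
\calH^{\kappa-1}(\partial B_\epsilon) \le \sum_{j=1}^{N_\epsilon} \calH^{\kappa-1}(\partial B_\epsilon^j).
\end{equation*}
Since each $\partial B_\epsilon^j$ is a Euclidean sphere of radius $r_j$ in $\R^\kappa$, a smooth $(\kappa-1)$-dimensional hypersurface on which $\calH^{\kappa-1}$ agrees (under the standard normalization) with ordinary surface measure, we have $\calH^{\kappa-1}(\partial B_\epsilon^j) = \sigma_{\kappa-1}\, r_j^{\kappa-1}$, where $\sigma_{\kappa-1}$ denotes the $\calH^{\kappa-1}$-measure of the unit sphere.

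Next I would invoke the smallness of the radii together with the defining property of the cover. Every ball has diameter less than $\epsilon$, so $r_j < \epsilon/2$ and hence $r_j^{\kappa-1} = r_j\, r_j^{\kappa-2} \le (\epsilon/2)\, r_j^{\kappa-2}$, which yields
\begin{equation*}
\calH^{\kappa-1}(\partial B_\epsilon) \le \frac{\sigma_{\kappa-1}}{2}\,\epsilon \sum_{j=1}^{N_\epsilon} r_j^{\kappa-2}.
\end{equation*}
The balls were selected in the proof of Proposition \ref{prop_finite_time_convergence} precisely so that $\sum_{j\ge 1} c\, r_j^{\kappa-2} < 2\,\calH^{\kappa-2}(\cl Q \cup \partial\calZ)$, and the right-hand side is a \emph{finite} constant independent of $\epsilon$, because $\cl Q \cup \partial\calZ$ is a finite union of bounded pieces of smooth manifolds of Hausdorff dimension at most $\kappa-2$ (cf.\ Lemma \ref{lemma_clQ_dim}). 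Thus $\sum_{j=1}^{N_\epsilon} r_j^{\kappa-2} \le \sum_{j\ge 1} r_j^{\kappa-2} \le (2/c)\,\calH^{\kappa-2}(\cl Q\cup\partial\calZ)$ is bounded uniformly in $\epsilon$, and therefore $\calH^{\kappa-1}(\partial B_\epsilon) = O(\epsilon)$, which tends to $0$ as $\epsilon \to 0$.

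The one point deserving care is this uniform control of $\sum_j r_j^{\kappa-2}$: it is exactly here that the hypothesis that $\cl Q \cup \partial\calZ$ has Hausdorff dimension at most $\kappa-2$ (rather than merely $\kappa-1$) does the work, since it is what lets the $(\kappa-2)$-dimensional covering bound absorb the extra factor of $\epsilon$ needed in passing from a $(\kappa-2)$-dimensional mass estimate to a $(\kappa-1)$-dimensional surface estimate. The remaining ingredients---finiteness of the subcover, subadditivity of $\calH^{\kappa-1}$, and the $r^{\kappa-1}$ scaling of sphere areas---are routine, so I do not anticipate any serious obstacle beyond bookkeeping the constants.
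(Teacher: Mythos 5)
Your proof is correct, and it rests on the same key estimate as the paper's---trading one power of the radius for a factor of $\epsilon$ via $r_j^{\kappa-1} \le (\epsilon/2)\,r_j^{\kappa-2}$ and then invoking the $(\kappa-2)$-dimensional covering bound $\sum_j c\,r_j^{\kappa-2} < 2\,\calH^{\kappa-2}(\cl Q\cup\partial\calZ) < \infty$---but the surrounding machinery is genuinely different. The paper works entirely with the Hausdorff \emph{pre-measures} $\calH^{\kappa-1}_\delta$: it bounds $\calH^{\kappa-1}_\epsilon(\partial B_\epsilon)$ by monotonicity ($\partial B_\epsilon \subseteq B_\epsilon$, which is covered by the balls themselves), obtains $\calH^{\kappa-1}_\epsilon(\partial B_\epsilon) \le 2\epsilon\,\calH^{\kappa-2}(\cl Q\cup\partial\calZ)$, and then appeals to $\calH^{\kappa-1} = \sup_{\delta>0}\calH^{\kappa-1}_\delta$ to conclude. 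You instead bound the full measure directly: $\partial B_\epsilon \subseteq \bigcup_{j=1}^{N_\epsilon}\partial B_\epsilon^j$ (valid precisely because the subcover is finite, as you note), then compute $\calH^{\kappa-1}(\partial B_\epsilon^j) = \sigma_{\kappa-1}\,r_j^{\kappa-1}$ exactly and sum. Your route is arguably the more airtight of the two: the paper's final step is loose as written, since the exhibited cover is admissible only at scale $\epsilon$, so bounding $\calH^{\kappa-1}_\epsilon(\partial B_\epsilon)$ does not by itself control $\calH^{\kappa-1}_\delta(\partial B_\epsilon)$ for $\delta < \epsilon$, and hence not the supremum defining $\calH^{\kappa-1}(\partial B_\epsilon)$; your exact sphere-area computation is uniform in the scale and is exactly the ingredient that closes this gap (one could equally patch the paper's argument by refining each sphere into $O((r_j/\delta)^{\kappa-1})$ sets of diameter $\delta$, which reproduces your bound $\sum_j r_j^{\kappa-1} \le C\epsilon$). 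You also make explicit the finiteness of $\calH^{\kappa-2}(\cl Q\cup\partial\calZ)$, which the paper asserts without comment; your justification---that the set is a finite union of bounded pieces of smooth manifolds of dimension at most $\kappa-2$, per Lemma \ref{lemma_clQ_dim} and the structure established in Section \ref{sec_finite_time_convergence}---is the right one, since Hausdorff dimension at most $\kappa-2$ alone would not preclude $\calH^{\kappa-2}$ being infinite. What the paper's approach buys is brevity and the avoidance of any rectifiability or surface-area facts, staying entirely within the covering definition of $\calH^s$; what yours buys is a scale-uniform bound and an explicit $O(\epsilon)$ rate.
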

\begin{proof}

Following standard notation (see \cite{EvansGariepy}, Chapter 2), for $0\leq s<\infty$, $0<\delta\leq \infty$, and $A\subset \R^\kappa$, let
$$
\calH^{s}_{\delta}(A):= \inf\bigg\{\sum_{j=1}^\infty \alpha(s)\left(\frac{\diam C_j}{2}\right)^s:~ A\subset\bigcup_{j=1}^\infty C_j, ~\diam C_j \leq \delta \bigg\},
$$
where $\alpha(s):= \frac{\pi^s}{\Gamma(\frac{s}{2})+1}$, and where $\Gamma$ in this context denotes the $\Gamma$ function

By our construction of $(B_\epsilon^j)_{j\geq 1}$, for every $\epsilon > 0$ we have
$$\sum_{j=1}^\infty \alpha(s) \left(\frac{\diam B_\epsilon^j}{2}\right)^{\kappa-2} < 2\calH^{\kappa-2}(Q\cup\partial \calZ) <\infty.$$
Since $\diam B_\epsilon^j \leq \epsilon$ for every $\epsilon>0$, $j\in\N$, this gives
\begin{align*}
\lim_{\epsilon\to 0} \calH^{\kappa-1}_\epsilon(\partial B_\epsilon) & \leq \lim_{\epsilon\to 0} \calH^{\kappa-1}_\epsilon(B_\epsilon)\\
 & \leq \lim_{\epsilon\to 0}\sum_{j=1}^\infty \alpha(s) \left(\frac{\diam B_\epsilon^j}{2}\right)^{\kappa-1}\\
& \leq \lim_{\epsilon\to 0} \epsilon \sum_{j=1}^\infty \alpha(s) \left(\frac{\diam B_\epsilon^j}{2}\right)^{\kappa-2}\\
& \leq \lim_{\epsilon \to 0} \epsilon 2\calH^{\kappa-2}(Q\cup\partial \calZ)~ = 0.
\end{align*}
By the definition of the Hausdorff measure we have $\calH^{\kappa-1}(\partial B_\epsilon) := \sup_{\delta > 0}\calH^{\kappa-1}_\delta(\partial B_\epsilon)$. Hence, the above implies $\lim_{\epsilon\to 0} \calH^{\kappa-1}(\partial B_\epsilon) = 0$.
\end{proof}

\begin{lemma} \label{lemma_clQ_dim}
Let $Q$ be defined as in Section \ref{sec_finite_time_convergence}. Then $\cl Q$ has Hausdorff dimension at most $\kappa-2$.
\end{lemma}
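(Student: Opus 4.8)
The plan is to exploit the fact that every object appearing in the definition of $Q=\tilde Q\cup\Lambda(x^*)$ is semialgebraic. Each indifference surface $\calI_{i,k,\ell}$ is the zero set of the polynomial $p_{i,k,\ell}(x):=U(y_i^k,x_{-i})-U(y_i^\ell,x_{-i})$, and by Lemma \ref{lemma_thin_indifference_surface} (with non-degeneracy) none of these vanishes identically on $X$, so each is a genuine real algebraic hypersurface. The condition ``two normal vectors do not coincide'' is a polynomial condition on the gradients $\nabla p_{i,k,\ell}$, and $\Lambda(x^*)=\Gr(g)$ is semialgebraic because $g$ arises by the implicit function theorem from the polynomial system $F(x_p,g(x_p),u)=0$ (semialgebraicity of such implicit solutions follows from Tarski--Seidenberg). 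The crux is that, while Hausdorff dimension can in general increase under closure, for a semialgebraic set $S$ one has both $\dim_H S=\dim S$ and $\dim\cl S=\dim S$. Thus it suffices to bound the semialgebraic dimensions of $\tilde Q$ and of $\Lambda(x^*)$ by $\kappa-2$, which the main text has essentially already arranged.

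First I would treat $\tilde Q$. After replacing each defining polynomial by its squarefree part (which leaves the surface unchanged), I consider a pair of distinct surfaces $Z(\bar p),Z(\bar p')$ and factor $\bar p=q r_1$, $\bar p'=q r_2$ with $q=\gcd(\bar p,\bar p')$; squarefreeness forces $q,r_1,r_2$ to be pairwise coprime. On the common component $Z(q)$ the two gradients are both proportional to $\nabla q$ wherever $\nabla q\neq 0$ and $r_1 r_2\neq 0$, so the normals coincide there and those points are excluded from $\tilde Q$. Hence the contribution of this pair to $\tilde Q$ lies in the closed algebraic set $(Z(r_1)\cap Z(r_2))\cup(Z(q)\cap Z(r_1))\cup(Z(q)\cap Z(r_2))\cup\mathrm{Sing}(Z(q))$, each piece being an intersection of coprime polynomials or the singular locus of a reduced hypersurface, hence of dimension at most $\kappa-2$ (using Lemma \ref{lemma_polynomial_level_set} and the fact that the real zero set of two coprime polynomials has dimension at most $\kappa-2$). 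The low-dimensional-component clause in the definition of $\tilde Q$ is handled identically, since those components are themselves algebraic of dimension at most $\kappa-2$. Taking the finite union over all pairs of surfaces exhibits $\tilde Q$ inside a closed algebraic set of dimension $\le\kappa-2$, which therefore already contains $\cl\tilde Q$.

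For $\Lambda(x^*)$, in the completely mixed case $\Lambda(x^*)=\{x^*\}$ is trivial, so suppose $x^*$ is incompletely mixed and $\Lambda(x^*)=\Gr(g)$. By Remark \ref{remark_g}, $\Gr(g)$ lies in $\bigcap_{i\le\tilde N,\,k\le\gamma_i-1}\calI_{i,k+1,1}$, and the defining polynomials $F_i^k$ of this system have Jacobian with respect to $x_m$ equal to $\tilde{\vH}$, which is invertible at $x^*$ by non-degeneracy. Consequently $\Gr(g)$ is a smooth manifold of dimension $\kappa-\gamma$, and since $\gamma\ge 2$ by Lemma \ref{lemma_two_players_mixing} (cf. \eqref{eq_graph_size}) its dimension is at most $\kappa-2$; semialgebraicity of $\Gr(g)$ then gives $\dim\cl\Gr(g)=\kappa-\gamma\le\kappa-2$. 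Combining the two cases, $\cl Q\subseteq\cl\tilde Q\cup\cl\Lambda(x^*)$ has Hausdorff dimension at most $\kappa-2$, as claimed.

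I expect the main obstacle to be exactly the interchange of closure and dimension: the set $\tilde Q$ is cut out using the \emph{open} condition that normals differ, so a priori its closure could reabsorb a full $(\kappa-1)$-dimensional slab of some indifference surface. The gcd/transversality bookkeeping above is precisely what prevents this, by confining $\tilde Q$ to a \emph{closed} algebraic set of codimension at least two before any closure is taken; the semialgebraic invariance $\dim\cl S=\dim S$ then does the rest. The remaining effort is routine verification that each listed algebraic piece genuinely has codimension at least two, together with the standard bound relating the real dimension of a real variety to its complexification.
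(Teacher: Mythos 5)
Your proposal is correct, but it reaches the conclusion by a genuinely different route than the paper. The paper's proof is soft and topological: it introduces the locus $A$ where two or more indifference surfaces intersect and the sublocus $N\subseteq A$ where the normals coincide, defines the relative boundary $\partial N=\cl N\backslash \ri N$, observes that $\cl \tilde Q\subseteq \tilde Q\cup\partial N$ (a limit of intersection points with non-coinciding normals cannot lie in the relative interior of $N$), and bounds the dimension of $\partial N$ by $\kappa-2$ by viewing $N$ as a finite union of smooth $(\kappa-1)$-dimensional surfaces plus a $(\kappa-2)$-dimensional remainder; for $\Lambda(x^*)$ it simply asserts that smoothness of $g$ lets the dimension bound pass to $\cl\Gr(g)$. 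You never argue about relative boundaries at all: you trap $\tilde Q$ inside a \emph{closed} algebraic set of dimension at most $\kappa-2$ \emph{before} any closure is taken, via the squarefree/gcd decomposition (normals automatically coincide on the shared component $Z(q)$ away from $\mathrm{Sing}(Z(q))$ and $Z(r_1r_2)$, and each residual piece is a coprime intersection or a singular locus, hence of codimension at least two by complexification), and you invoke the semialgebraic invariance $\dim \cl S=\dim S$ for $\Gr(g)$. Your route buys rigor exactly where the paper is quickest: the paper's one-line claim that smoothness of $g$ controls $\cl\Gr(g)$ is not valid for arbitrary smooth functions (graph closures over $\partial\mathcal{D}(g)$ can gain dimension when the fiber dimension $\gamma$ is at least $2$), and your semialgebraic argument closes that gap honestly, provided you record that $\calD(g)$ must be \emph{chosen} semialgebraic (e.g., a small ball, making $g$ a Nash function), since the paper's implicit-function-theorem neighborhood is otherwise unspecified; likewise your gcd bookkeeping substantiates the paper's unproven stratification claim about $N$. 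What the paper's route buys is brevity: no complexification, no Tarski--Seidenberg, and it absorbs the low-dimensional-component clause without factoring anything. Two small points to tighten in your write-up: ``normals coincide'' must be read projectively (you compare $r_1\nabla q$ with $r_2\nabla q$, which agree only up to nonzero scalars), and you should note explicitly that squarefreeness and coprimality over $\R$ transfer to $\mathbb{C}$, so the complex codimension-two bounds indeed apply to the real zero sets.
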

\begin{proof}
Let $A$ be the subset of $X$ where two or more decision surfaces intersect.
Let $N\subset A$ be the subset of $X$ where two or more decision surfaces intersect and their normal vectors coincide.
Define the \emph{relative interior} of $N$ with respect to $A$ as
$$
\ri N := \{x\in N:~ \exists \epsilon>0 \mbox{ s.t. }  B(x,\epsilon)\cap A \subset N\},
$$
and define the \emph{relative boundary} of $N$ with respect to $A$ as
$$
\partial N := \cl N \backslash \ri N.
$$
Since each indifference surface has Hausdorff dimension $\kappa-1$, $N$ has Hausdorff dimension at most $\kappa-1$. In particular, $N$ is the union of a finite number of smooth $\kappa-1$ dimensional surfaces and a component with Hausdorff dimension at most $\kappa-2$. This implies that the relative boundary of $N$ has Hausdorff dimension at most $\kappa-2$.

Let $\tilde Q$ be as defined in Section \ref{sec_finite_time_convergence}.
Note that the closure of $\tilde Q$ satisfies $\cl \tilde Q \subseteq \tilde Q \cup \partial N$. Since the sets $\tilde Q$ and $\partial N$ have Hausdorff dimension at most $\kappa-2$, the set $\cl \tilde Q$ also has Hausdorff dimension at most $\kappa-2$.

Let $\Lambda(x^*)$ be as defined in Section \ref{sec_proof_main_result}. If $\Lambda(x^*) = \{x^*\}$, then $\Lambda(x^*)$ is closed and has Hausdorff dimension 0. Otherwise, $\Lambda(x^*)$ is defined as the graph of $g$. In Section \ref{sec_diff_ineq_prelims} it was shown that $\Gr(g)$ has Hausdorff dimension at most $\kappa-2$. Since $g$ is a smooth function, the closure of $\Gr(g)$ has Hausdorff dimension at most $\kappa-2$.

Recall that $Q$ is defined as $Q=\tilde{Q}\cup \Lambda(x^*)$ and hence $\cl Q = \cl \tilde Q \cup \cl \Lambda(x^*)$. Since $\cl \tilde Q$ and $\cl \Lambda(x^*)$ each have Hausdorff dimension at most $\kappa-2$, $\cl Q$ also has Hausdorff dimension at most $\kappa-2$.
\end{proof}

\begin{lemma}\label{lemma_calZ_normal}
Let $\calZ$ be as defined \eqref{def_calZ}. Then for any $x\in \calZ$ there holds $\nu\cdot y = 0$ for any vector $\nu$ normal to $\calZ$ at $x$, and any $y\in \FP(x)$
\end{lemma}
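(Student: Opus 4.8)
The plan is to show the BR-dynamics field is tangent to $\calZ$ by relating the normal of $\calZ$ to the normal of the indifference surface carrying the point in question, and then invoking the mechanism already exploited in Lemma \ref{lemma_tangential_jumps}: $\FP$ can only jump tangentially to indifference surfaces. The only case with nontrivial content is when $\calZ$ is $(\kappa-1)$-dimensional near $x$, so that it admits a well-defined one-dimensional normal; when $\dim\calZ\le\kappa-2$ one has $\partial\calZ=\calZ$ by convention and the enclosing argument needs nothing further.

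First I would fix $x\in\calZ$ and, using the definition \eqref{def_calZ}, select an indifference surface $\calI:=\calI_{i,k,\ell}$ containing $x$ to which the field is tangent, i.e. with normal $\nu_{\calI}$ at $x$ satisfying $\nu_{\calI}\cdot z_0=0$ for some $z_0\in\FP(x)$. Since $\calZ\subseteq\bigcup_{i,k,\ell}\calI_{i,k,\ell}$, locally $\calZ\subseteq\calI$. If $\calZ$ is $(\kappa-1)$-dimensional at $x$ then, being a smooth $(\kappa-1)$-dimensional surface contained in the smooth $(\kappa-1)$-dimensional surface $\calI$ (cf.\ Lemma \ref{lemma_thin_indifference_surface} together with $x\notin Q$), it shares the tangent space of $\calI$ at $x$, so the normal $\nu$ to $\calZ$ is parallel to $\nu_{\calI}$. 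It therefore suffices to prove $\nu_{\calI}\cdot y=0$ for all $y\in\FP(x)$.

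Next I would establish the two ingredients that force this. On one hand, the defining function of $\calI_{i,k,\ell}$, namely $h(x):=U(y_i^k,x_{-i})-U(y_i^\ell,x_{-i})$, is independent of the coordinates $x_i^1,\dots,x_i^{K_i-1}$ of player $i$; hence $\nu_{\calI}=\nabla h$ has a zero entry in every player-$i$ coordinate. On the other hand, because $x\notin Q$, no second indifference surface with a distinct normal meets a neighborhood of $x$, so every player $j\ne i$ has a strict, vertex-valued best response there, and any two elements $z,z'\in\FP(x)=\BR(x)-x$ can differ only in the player-$i$ coordinates. Combining these, $\nu_{\calI}\cdot(z-z')=0$, so the map $z\mapsto\nu_{\calI}\cdot z$ is constant on $\FP(x)$; since it vanishes at $z_0$ by the choice of $\calI$, it vanishes on all of $\FP(x)$, giving $\nu\cdot y=0$ for every $y\in\FP(x)$.

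I expect the main delicate point to be the geometric step identifying the normal of $\calZ$ with that of the carrying indifference surface, and in particular the degenerate possibility that several indifference surfaces pass through $x$ with a common normal (which $x\notin Q$ does not a priori exclude). This case is in fact benign: a normal shared by $\calI_{i,k,\ell}$ and $\calI_{j,k',\ell'}$ must vanish in the coordinates of both players $i$ and $j$, and since the indifference of exactly these players is what allows $\FP(x)$ to be set-valued, the differences among elements of $\FP(x)$ still lie in coordinates where $\nu$ vanishes, so the computation above applies unchanged. The lower-dimensional components of $\calZ$ are then handled by the convention $\partial\calZ=\calZ$, under which the enclosing ``enter or exit only through $\partial\calZ$'' statement is immediate.
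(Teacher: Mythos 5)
Your proposal is correct and follows essentially the same route as the paper's proof: the normal to the carrying indifference surface vanishes in the coordinates of every indifferent player (the paper formalizes this with the index set $\calN(x)$), while $\FP$ is single-valued in the remaining coordinates, so $z\mapsto\nu\cdot z$ is constant on $\FP(x)$ and the tangency witnessed by one element extends to all of $\FP(x)$. Your explicit reduction identifying the normal of a $(\kappa-1)$-dimensional component $\calZ_s$ with that of its carrying surface $\calI_s$, and your handling of the common-normal degeneracy, are points the paper treats implicitly, so your write-up is if anything slightly more careful.
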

\begin{proof}
Suppose $x\in X$ and $x$ is in some indifference surface $\calI_{i,k,\ell}$. Suppose $\nu$ is a vector that is normal to $\calI_{i,k,\ell}$ at $x$.
By the definition of $\calI_{i,k,\ell}$, if $x\in \calI_{i,k,\ell}$ then for all $\hat x\in X$ such that $\hat x_{-i} = x_{-i}$ we have $\hat{x}\in \calI_{i,k,\ell}$. This implies that the $(i,\tilde{k})$-th component of $\nu$ must be zero for every $\tilde{k} = 1,\ldots,K_i-1$.

For $x\in X$, let $\calN(x):=\{(i,k):~ i\in\{1,\dots,N\},~k\in\{1,\ldots,K_i-1\},~ x\in\calI_{i,k,\ell} \mbox{ for some } ~\ell=1,\ldots,K_i-1,~\ell\not=k\}$ so that, given a point $x\in X$,  $\calN(x)$ specifies the indifference surfaces in which $x$ lies. Letting $\FP_i^k$ be the $(i,k)$-th component map of $\FP$, note that by the definition of an indifference surface, $\FP_i^k(x)$ is single valued for every pair $(i,k)\notin \calN(x)$.

Suppose $x\in X\backslash Q$ is in at least one decision surface $\calI$ and let $\nu$ be a vector that is normal to $\calI$ at $x$. Note that $x\notin Q$ implies that if $x$ is contained in any other decision surface $\hat \calI\not= \calI$, then $\nu$ is also normal to $\hat\calI$ at $x$. Letting $\nu$ be written componentwise as $\nu=(\nu_i^k)_{i=1,\ldots N,~k=1,\ldots,K_i-1}$, the above discussion implies that $\nu_i^k = 0$ for every pair $(i,k) \in \calN(x)$.

Now suppose $x\in \calZ$. By the definition of $\calZ$ we have $x\notin Q$ and $x$ is in at least one decision surface $\calI$. Let $\nu$ be a vector that is normal to $\calI$ at $x$.
By the definition of $\calZ$, there exists some $y\in \FP(x)$ such that $y\cdot \nu = 0$. Breaking this down in terms of components in $\calN(x)$ we have
$$
0 = y\cdot \nu = \sum_{(i,k)\in\calN(x)} y_i^k\nu_i^k + \sum_{(i,k)\notin\calN(x)} y_i^k\nu_i^k.
$$
The first sum is zero since $\nu_i^k = 0$ for all $(i,k) \in \calN(x)$. Consequently, the second sum must also be zero. But we have shown above that $F_i^k(x)$ is single valued for any $(i,k) \notin\calN(x)$. Hence, for any $\tilde{y}\in \FP(x)$ we have $\tilde{y}_i^k = y_i^k$ for all $(i,k) \notin\calN(x)$, and in particular,
$\sum_{(i,k)\notin\calN(x)} \tilde{y}_i^k\nu_i^k = \sum_{(i,k)\notin\calN(x)} y_i^k\nu_i^k = 0$. Moreover, since $\nu_i^k = 0$ for all $(i,k) \in\calN(x)$ we have $\sum_{(i,k)\in\calN(x)} \tilde{y}_i^k\nu_i^k=0$, which implies
$$
\tilde{y}\cdot \nu = \sum_{(i,k)\in\calN(x)} \tilde{y}_i^k\nu_i^k + \sum_{(i,k)\notin\calN(x)} \tilde{y}_i^k\nu_i^k = 0.
$$
Since $\tilde{y}\in \FP(x)$ was arbitrary, this proves the desired result.
\end{proof}

\section{Conclusions} \label{sec_conclusion}
The best-response dynamics \eqref{def_FP_autonomous} underlie many learning processes in game theory. We have shown that in any regular potential game (and hence, in almost every potential game \cite{swenson2017regular}), for almost every initial condition, the best-response dynamics \eqref{def_FP_autonomous} are well posed (i.e., there exists a unique solution) and converge to a pure-strategy NE. As a simple application of this result, we showed that solutions of \eqref{def_FP_autonomous} almost always converge at an exponential rate in potential games.

\bibliographystyle{siamplain}
\bibliography{myRefs}
\end{document}